\documentclass[runningheads,envcountsame]{llncs}
\usepackage[T1]{fontenc}
\usepackage{graphicx}
\usepackage{amsmath}
\usepackage{amssymb}
\usepackage{thmtools,thm-restate}
\usepackage{stmaryrd}
\usepackage{xcolor}
\usepackage{dsfont}
\usepackage[pdfusetitle,hidelinks]{hyperref}
\usepackage{enumerate}
\usepackage{wrapfig}
\usepackage{tikz}
\usepackage[capitalize]{cleveref}
\usepackage{macros}
\usetikzlibrary{cd, automata, positioning, arrows}

\binoppenalty=10000
\relpenalty=10000

\begin{document}

\title{Realization of relational presheaves}

% \titlerunning{Abbreviated paper title}
% If the paper title is too long for the running head, you can set
% an abbreviated paper title here

\author{Yorgo Chamoun\orcidID{0009-0007-2689-9148} \and
Samuel Mimram\orcidID{0000-0002-0767-2569}}
\authorrunning{Y. Chamoun and S. Mimram}
\institute{LIX, CNRS, École polytechnique, Institut Polytechnique de Paris, 91120 Palaiseau, France
\email{\{yorgo.chamoun,samuel.mimram\}@polytechnique.edu}}

\maketitle
\begin{abstract}
Relational presheaves generalize traditional presheaves by going to the category of sets and relations (as opposed to sets and functions) and by allowing functors which are lax. This added generality is useful because it intuitively allows one to encode situations where we have representables without boundaries or with multiple boundaries at once. In particular, the relational generalization of precubical sets has natural application to modeling concurrency.
In this article, we study categories of relational presheaves, and construct realization functors for those. We begin by observing that they form the category of set-based models of a cartesian theory, which implies in particular that they are locally finitely presentable categories. By using general results from categorical logic, we then show that the realization of such presheaves in a cocomplete category is a model of the theory in the opposite category, which allows characterizing situations in which we have a realization functor.
%
% We also study the relationship between traditional presheaves and relational ones.
% 
Finally, we explain that our work has applications in the semantics of concurrency theory. The realization namely allows one to compare syntactic constructions on relational presheaves and geometric ones. Thanks to it, we are able to provide a syntactic counterpart of the blowup operation, which was recently introduced by Haucourt on directed geometric semantics, as way of turning a directed space into a manifold.

% Relational presheaves over a category $\ccat$ are lax functors from $\ccat^{op}$ to the category $\Rel$ of sets and relations,
% and form a category $\RelPsh(\ccat)$.
% We show that this is in fact the category of set-based models of a cartesian theory,
% hence that it is in particular locally finitely presentable.
% We use general results of categorical logic to show that a realization of $\RelPsh(\ccat)$,
% i.e. a cocontinuous functor $\RelPsh(\ccat)\xrightarrow{}\dcat$ with $\dcat$ cocomplete,
% is nothing else than a model of the theory in $\dcat^{op}$.
% We also study the relationship between ordinary and relational presheaves.
% As an application, we define the geometric realization of relational precubical sets,
% which are meaningful objects in concurrency theory,
% and we show that the blowup operation commutes with this realization.

\keywords{Presheaves \and Relations \and Realization \and Categorical logic \and Precubical sets \and Higher dimensional automata.}
\end{abstract}

\section{Introduction}
\paragraph{Presheaves.}
The notion of \emph{presheaf} is omnipresent in modern mathematics and theoretical computer science. For instance, simplicial sets are at the heart of modern algebraic topology~\cite{may1992simplicial} and higher category theory~\cite{lurie2009higher}, and cubical sets are central in modeling truly concurrent processes through the notion of higher dimensional automaton~\cite{vanGlabbeek91,Pratt91} or achieving constructive approaches to univalent type theory~\cite{cohen2018cubical} to name a few of the myriad of occurrences of those. Here, we will be mostly interested in their ability to model transition systems, such as those arising from computational processes, concurrent ones in particular.

Formally, a presheaf is a functor $P:\ccat^\op\to\Set$, which can be understood as encoding algebraically a geometric object. Namely, an object $c$ of~$\ccat$ abstractly describes a shape, the maps of $\ccat$ describe the face operations, and the sets~$P(c)$ encode the elements of shape~$c$. Moreover, if we have a functor $\ccat\to\Top$ which describes how to associate an actual topological space to each abstract shape, we get an induced canonical functor $\Psh(\ccat)\to\Top$, called the \emph{geometric realization}, which associates a topological space to any presheaf, and this construction actually generalizes to any cocomplete category in place of~$\Top$~\cite{maclane2012sheaves}. Above, the notation $\Psh(\ccat)$ denotes the category of presheaves over~$\ccat$ and natural transformations between those. Such a category is always a Grothendieck topos (with trivial topology), and thus has very nice properties: it is always cocomplete (it is in fact the free cocompletion of~$\ccat$), complete, cartesian closed, has a subobject classifier, and so on.

As a simple example, consider the following category $\gcat$ with two objects and two non-trivial morphisms:
\vspace{-2ex}
\[
  \gcat
  \quad=\quad
  \begin{tikzcd}
    0\ar[r,shift left,"s"]\ar[r,shift right,"t"']&1
  \end{tikzcd}
\]
A presheaf $P:\gcat^\op\to\Set$ precisely corresponds to a (directed) \emph{graph} with~$P(0)$ as set of vertices, $P(1)$ as set of edges, the maps $P(s),P(t):P(1)\to P(0)$ respectively associating to each edge its source and target. Moreover, if one considers the functor $\gcat\to\Top$ sending $0$ to a point and $1$ to an interval, the image of~$s$ and~$t$ respectively being the inclusion of the point into the endpoints of the interval, the geometric realization functor $\Psh(\gcat)\to\Top$ associates to a graph the corresponding topological graph.

% Presheaves, i.e. functors 
% are omnipresent in modern mathematics and theoretical computer science.
% On the one hand, they are very expressive, 
% and they can model a huge number of widely used objects, 
% in particular those of interest for concurrency theorists
% % like directed graphs, deterministic and non-deterministic automata, simplicial and (pre)cubical sets, higher dimensional automata, etc.
% On the other hand, they are very well-behaved, 
% since the category $\Psh(\ccat)$ of presheaves over some fixed category $\ccat$ (and natural transformations between them) forms a Grothendieck topos \cite{maclane2012sheaves},
% and thus contains essentially every construction we would like to have in such a category.

\paragraph{Presheaves to relations.}
In order to be able to take into account some more situations, it is natural to generalize presheaves by replacing the category~$\Set$ by some other category~$\vcat$~\cite{kelly1982basic}. We will refrain from adopting such a general point of view here, and will be mostly interested in the case where~$\vcat=\Rel$, which allows accounting for situations where some elements of given shape have no boundary, or multiple boundaries. For instance, consider the two ``graphs'' below:
\[
  \begin{tikzcd}[row sep=0pt]
    &&\overset{z_1}\cdot\\
    \overset x\cdot\ar[r,"a"]&\overset y\cdot\ar[ur,"b_1"]\ar[dr,"b_2"']\\
    &&\overset{z_2}\cdot
  \end{tikzcd}
  \qquad\qquad\qquad
  \begin{tikzcd}[row sep=0pt]
    &&\overset{z_1}\cdot\\
    \ar[r,"a"]&\overset{y_1}{\underset{y_2}:}\ar[ur,"b_1"]\ar[dr,"b_2"']\\
    &&\overset{z_2}\cdot
  \end{tikzcd}
\]
On the left, we have drawn a graph $G:\gcat^\op\to\Set$ with $G(0)=\set{x,y,z_1,z_2}$, $G_1(1)=\set{a,b_1,b_2}$, $G(s)(a)=x$, $G(t)(a)=y$, and so on. By post-composition with the canonical functor $\Set\to\Rel$ (sending a set to itself and a function to its graph), this graph can also be seen as functor $\gcat^\op\to\Rel$. By opposition, the picture on the right represents a variant of this graph where the edge~$a$ has no vertex as source and two vertices as target. It can be represented as the presheaf~$H:\gcat^\op\to\Rel$ with
\begin{align*}
  H(0)&=\set{y_1,y_2,z_1,z_2}
  &
  H(s)(a)&=\emptyset
  &
  H(s)(b_i)&=\set{y_i}
  \\
  H(1)&=\set{a,b_1,b_2}
  &
  H(t)(a)&=\set{y_1,y_2}
  &
  H(t)(b_i)&=\set{z_i}
\end{align*}
 for $i\in\set{0,1}$ (as customary, we write a relation between $H(1)$ and $H(0)$ as a Kleisli map, \ie a function from $H(1)$ to the powerset of $H(0)$).
We can think of the edge~$a$ as representing a transition in some system which has no starting point, and makes a choice during the execution of the transition, about whether it wants to end on~$y_1$ or $y_2$: depending on this choice, only~$b_1$ or~$b_2$ can be executed afterward.
As illustrated in the previous example, going to relational presheaves allows for representing both shapes which are ``partial'' in the sense that they are lacking some boundary ($a$ has no source for instance) and ``multiple'' in the sense that they have multiple possibilities of boundaries ($a$ has both $y_1$ and $y_2$ as target).

\paragraph{Higher dimensional automata.}
In this article, we will not only be interested in presheaves over the category~$\gcat$, but rather on the \emph{cube category}~$\square$ which generalizes the previous situation, in the sense that $\gcat$ can be recovered as a full subcategory.
The objects of~$\square$ are natural numbers, where $n\in\N$ encodes the shape of an $n$-dimensional cube. Morphisms are generated by $d^\epsilon_{n,i}:n\to n+1$ with $\epsilon\in\set{-,+}$, $n,i\in\N$ with $0\leq i\leq n$, subject to the relations \[d^{\epsilon'}_{n+1,j}\circ d^\epsilon_{n,i}=d^\epsilon_{n+1,i}\circ d^{\epsilon'}_{n,j-1}\] for $0\leq i<j\leq n+1$. A morphism $d^\epsilon_{n,i}$ encodes the canonical inclusion of the $n$-cube into the $(n+1)$-cube as source or target (depending on~$\epsilon$) in direction~$i$. For instance, we have the following inclusions of the $1$-cube into the $2$-cube:
\[
  \begin{tikzpicture}
    \draw[->,thick] (-2,0) -- (-2,1);
    \fill[color=mygray] (0,0) rectangle (1,1);
    \draw[->,thick] (0,0) -- (1,0);
    \draw[->,thick] (1,0) -- (1,1);
    \draw[->,thick] (0,0) -- (0,1);
    \draw[->,thick] (0,1) -- (1,1);
    \draw (-2,.5) edge[dashed,->,"$\scriptstyle d^-_{1,0}$"',pos=.75,inner sep=0pt] (0,.25);
    \draw (-2,.5) edge[dashed,->,"$\scriptstyle d^+_{1,0}$",inner sep=0pt] (1,.75);
    \draw (-2,.5) edge[dashed,->,bend left,"$\scriptstyle d^+_{1,1}$",inner sep=0pt] (.5,1);
    \draw (-2,.5) edge[dashed,->,bend right,"$\scriptstyle d^-_{1,1}$"',inner sep=0pt] (.5,0);
  \end{tikzpicture}
\]
As expected, we can recover~$\gcat$ as the full subcategory of $\square$ on the objects~$0$ and~$1$.

\begin{wrapfigure}{r}{0.22\textwidth}
  \vspace{-1cm}
  \[
  \begin{tikzpicture}
    \def\a{{1/sqrt(2)}}
    \def\b{{2/sqrt(2)}}
    \draw (-1,0) edge[->,thick,"a"] (0,0);
    \fill[color=mygray] (0,0) -- (\a,\a) -- (\b,0) -- (\a,-\a) -- cycle;
    \draw (0,0) edge[->,thick,"$b$"] (\a,\a);
    \draw (\a,\a) edge[->,thick,"$c$"] (\b,0);
    \draw (0,0) edge[->,thick,"$c$"'] (\a,-\a);
    \draw (\a,-\a) edge[->,thick,"$b$"'] (\b,0);
  \end{tikzpicture}
\]
\vspace{-1cm}
\end{wrapfigure}
Presheaves over $\square$ are called \emph{precubical sets} and are widely used in models of true concurrency. Namely, the presence of a square can be thought of as encoding a commutation between the transitions corresponding to its edges, and $n$-cubes similarly encode commutations between $n$ transitions at once. This explains why they are at the basis of the definition of \emph{higher-dimensional automata}~\cite{vanGlabbeek91,Pratt91} (HDA) which generalize traditional automata, and constitute a very expressive model of true concurrency~\cite{vanGlabbeek06}: an HDA consists of a precubical set, with a labeling of edges ($1$-cubes), and identified initial and final states ($0$-cubes). For instance, the HDA
on the right encodes a system executing a transition~$a$, followed by two transitions~$b$ and~$c$ executed in parallel.

\paragraph{Relational presheaves.}
As indicated above, a \emph{relational presheaf} on $\ccat$ is a functor $P:\ccat^\op\to\Rel$. Our definition is actually slightly more liberal than this: it allows functors to be \emph{lax}. By this, we mean that for every composable functions~$f$ and~$g$, we have $P(g)\circ P(f)\subseteq P(g\circ f)$ and $\id\subseteq P(\id)$, \ie we allow an inclusion where an equality would have been required for traditional functors. Let us illustrate what this extra level of generality brings by illustrating it on HDA, \ie presheaves on~$\square$. As explained above, a square as $(A)$ below encodes a situation where we have two transitions~$a$ and~$b$ running in parallel:
\begin{equation*}
  \begin{array}{c@{\qquad\qquad\quad}c@{\qquad\qquad\quad}c} 
    \begin{tikzpicture}[baseline=(b.base)]
      \coordinate (b) at (.5,.5);
      \fill[mygray] (0,0) rectangle (1,1);
      \filldraw (0,0) circle (0.05) node[left]{$s$};
      \filldraw (1,0) circle (0.05);
      \filldraw (0,1) circle (0.05);
      \filldraw (1,1) circle (0.05) node[right]{$t$};
      \draw
      (0,0) edge[below,thick,"$a$"'] (1,0)
      (1,0) edge[below,thick,"$b$"'] (1,1)
      (0,0) edge[below,thick,"$b$"] (0,1)
      (0,1) edge[below,thick,"$a$"] (1,1)
      ;
      \draw[thick,red] (0,0) -- (.25,0) -- (.75,1) -- (1,1);
    \end{tikzpicture}
    &
    \begin{tikzpicture}[baseline=(b.base)]
      \coordinate (b) at (.5,.5);
      \fill[mygray] (0,0) rectangle (1,1);
      \filldraw (0,0) circle (0.05) node[left]{$s$};
      \filldraw (1,0) circle (0.05);
      % \filldraw (0,1) circle (0.05);
      \filldraw (1,1) circle (0.05) node[right]{$t$};
      \draw
      (0,0) edge[below,thick,"$a$"'] (1,0)
      (1,0) edge[below,thick,"$b$"'] (1,1)
      (0,0) edge[below,draw=none,"$b$"] (0,1)
      (0,1) edge[below,thick,"$a$"] (1,1)
      ;
      \draw[thick,red] (0,0) -- (.25,0) -- (.75,1) -- (1,1);
    \end{tikzpicture}
    % &
    % \begin{tikzpicture}[baseline=(b.base)]
      % \coordinate (b) at (.5,.5);
      % \fill[mygray] (0,0) rectangle (1,1);
      % \filldraw (0,0) circle (0.05) node[left]{$\scriptstyle x_{00}$};
      % \filldraw (1,0) circle (0.05) node[right]{$\scriptstyle x_{10}$};
      % % \filldraw (0,1) circle (0.05);
      % \filldraw (1,1) circle (0.05) node[right]{$\scriptstyle x_{11}$};
      % \draw
      % (0,0) edge[below,thick,"$\scriptstyle x_{\star0}$"'] (1,0)
      % (1,0) edge[below,thick,"$\scriptstyle x_{1\star}$"'] (1,1)
      % % (0,0) edge[below,draw=none,"$b$"] (0,1)
      % (0,1) edge[below,thick,"$\scriptstyle x_{\star1}$"] (1,1)
      % ;
      % \draw (.5,.5) node {$\scriptstyle x_{\star\star}$};
      % % \draw[thick,red] (0,0) -- (.25,0) -- (.75,1) -- (1,1);
    % \end{tikzpicture}
    &
    \begin{tikzpicture}[baseline=(b.base)]
      \coordinate (b) at (.5,.5);
      \fill[mygray] (0,0) rectangle (1,1);
      \filldraw (0,0) circle (0.05) node[left]{$s$};
      % \filldraw (1,0) circle (0.05);
      % \filldraw (0,1) circle (0.05);
      \filldraw (1,1) circle (0.05) node[right]{$t$};
      \draw
      (0,0) edge[below,draw=none,thick,"$a$"'] (1,0)
      (1,0) edge[below,thick,"$b$"'] (1,1)
      (0,0) edge[below,draw=none,"$b$"] (0,1)
      (0,1) edge[below,thick,"$a$"] (1,1)
      ;
      \draw[thick,red] (0,0) -- (1,1);
      \draw (.7,.3) node{$\alpha$};
    \end{tikzpicture}
    \\
    (A)&(B)&(C)
  \end{array}
\end{equation*}
Moreover, an actual interleaving of the two transitions can be represented by a path such as the one in red in $(A)$, which corresponds to an execution where $a$ starts, then both~$a$ and~$b$ run in parallel, $b$ stops and then~$a$ stops.
%
% $a$ has to start before~$b$ (in $(C)$ is the same example, with the names of cells figured instead of labels). This can be represented by a precubical relational presheaf~$H:\square\tolax\Rel$ with
% \begin{align*}
  % H(0)&=\set{x_{00},x_{10},x_{01},x_{11}}
  % &
  % H(1)&=\set{x_{\star0},x_{1\star},x_{\star1}}
  % &
  % H(2)&=\set{x_{\star\star}}
% \end{align*}
% The face relations are such that we have $(x_{00},x_{\star0})\in H(d^-_{0,0})$ and $(x_{\star0},x_{\star\star})\in H(d^-_{1,1})$ but not $(x_{00},x_{\star\star})\in H(d^-_{0,0}\circ d^-_{1,1})$ and thus $H(d^-_{0,0}\circ d^-_{1,1})\subsetneq H(d^-_{0,0})\circ H(d^-_{1,1})$. This means that whenever we are at the starting point~$x_{00}$ (labeled $s$), we have to start executing~$x_{\star0}$ (labeled $a$) before being able to start executing~$b$ (as in the path in red): we cannot start both at the same time as in the execution corresponding to the red path in~$(D)$.
%
In $(B)$, we have represented a variant of this situation where we require that $b$ cannot start before~$a$ does, which amounts to removing the vertical boundary edge on the left.
In $(C)$, we have represented a third variant where both~$a$ and~$b$ have to start at the same time (like in the red diagonal path). This last situation can be encoded as a precubical relational presheaf~$H:\square\tolax\Rel$ with
$H(0)=\set{s,t}$, $H(1)=\set{a,b}$ and $H(2)=\set{\alpha}$.
% \begin{align*}
%   H(0)&=\set{s,t}
%   &
%   H(1)&=\set{a,b}
%   &
%   H(2)&=\set{\alpha}
% \end{align*}
The face relations are such that $(s,\alpha)\in H(d^-_{0,0}\circ d^-_{1,1})$ but $H(d^-_{1,1}(\alpha))=\emptyset$ so that $H(d^-_{0,0}\circ d^-_{1,1})\subsetneq H(d^-_{0,0})\circ H(d^-_{1,1})$, which illustrates the need for lax functors: the point $s$ is in the iterated source of the square~$\alpha$, but it is not a source of a 1-cell in the source of~$\alpha$.

\paragraph{Realization.}
The figures $(A)-(C)$ above should be handled formally: it is useful to associate, to each HDA, a topological space. More precisely, the topological space we construct should be ``directed'', \ie equipped with a notion of ``time direction'', in such a way that paths which are increasing in time correspond to actual executions (as for the red paths above), see~\cite{fajstrup2016directed} for a presentation of this approach based on directed algebraic topology. This process can be encoded by a functor called \emph{geometric realization} constructed as follows.
We have a canonical functor $M:\square\to\Top$, sending an object~$n$ to the canonical $n$-cube $[0,1]^n$. By left Kan extension along the Yoneda embedding $\yo:\square\to\Psh(\square)$, this functor induces a continuous functor $R:\Psh(\square)\to\Top$ called the \emph{realization}, which to a presheaf $P$ associates the space obtained by gluing standard cubes as described by the presheaf. This process works more generally with any cocomplete category~$\dcat$ instead of~$\Top$ (such as the category of directed topological spaces). 
%and any base category~$\ccat$ instead of~$\square$.

Here, we are interested in realizing relational presheaves, \ie defining functors $\RelPsh(\ccat)\to\dcat$ to some cocomplete category~$\dcat$. It turns out that the situation is more complicated than above: intuitively, instead of simply specifying how to realize abstract shapes, \ie objects of $\ccat$ by the functor~$M$, we also need to specify the realization of every pair of objects related by a morphism.
For example, to realize relational graphs in $\RelPsh(\gcat)$, we still have to choose a realization of $0$ and $1$, which we take to be the terminal set and the open interval $]0,1[$ respectively, but we also need to specify how to realize an egde with a source or with a target endpoint, the natural choices being~$[0,1[$ and $]0,1]$ respectively:
\[
  M(0)
  =
  \begin{tikzpicture}[baseline={([yshift=-.5ex]current bounding box.center)}]
    \filldraw (0,0) circle (.05);
  \end{tikzpicture}
  \qquad
  M(1)=
  \begin{tikzpicture}[baseline={([yshift=-.5ex]current bounding box.center)}]
    \draw[thick] (0,0) -- (.5,0);
  \end{tikzpicture}
  \qquad
  M(0\overset{s}\to 1)
  =
  \begin{tikzpicture}[baseline={([yshift=-.5ex]current bounding box.center)}]
    \filldraw (0,0) circle (.05);
    \draw[thick] (0,0) -- (.5,0);
  \end{tikzpicture}
  \qquad
  M(0\overset{t}\to 1)
  =
  \begin{tikzpicture}[baseline={([yshift=-.5ex]current bounding box.center)}]
    \filldraw (.5,0) circle (.05);
    \draw[thick] (0,0) -- (.5,0);
  \end{tikzpicture}
\]
We can then realize every relational graph as before, by gluing these building blocks together. In general, we should not expect all assignments of this kind to work (for instance, the realization of an edge and an edge with a target should be somehow related), and we provide here conditions for this. In particular, we will see that the natural first try does not satisfy those conditions and has to be modified so that it is the case.

\paragraph{Blowup.}
As an application of relational presheaves, we explain here how they can be used to provide a nice description of the operation of \emph{blowup} on geometric realizations of precubical sets introduced in~\cite{chamoun2025non,haucourt2025non}. We namely show here that this operation, which canonically turns such a space into a manifold, can be obtained as the geometric realization of a combinatorial blowup construction, turning a precubical set into a relational one. In this way, we obtain a description of the blowup operation in purely combinatorial terms, which is simpler to define and study than the topological one.

\begin{wrapfigure}{r}{0.2\textwidth}
  \vspace{-0.5cm}
  \begin{tikzpicture}
    % \draw (-1,.5) -- (1,-.5);
    % \draw (-1,-.5) -- (1,.5);
    \draw
    (-1,.5) edge["$\scriptstyle a_1$",pos=.25] (0,0)
    (-1,-.5) edge["$\scriptstyle a_2$"',pos=.25] (0,0)
    (0,0) edge["$\scriptstyle b_1$",pos=.75] (1,.5)
    (0,0) edge["$\scriptstyle b_2$"',pos=.75] (1,-.5)
    ;
    \filldraw (0,0) circle (.05) node[above]{$\scriptstyle x$};
  
    \begin{scope}[yshift=15mm]
      \draw (0,0) node{$\downarrow$};
    \end{scope}

    \begin{scope}[yshift=30mm]
      \draw
    (-1,.5) edge["$\scriptstyle a_1$",pos=.25] (0,0)
    (-1,-.5) edge["$\scriptstyle a_2$"',pos=.25] (0,0)
    (0,0) edge["$\scriptstyle b_1$",pos=.75] (1,.5)
    (0,0) edge["$\scriptstyle b_2$"',pos=.75] (1,-.5)
    ;
    \filldraw[color=white] (-.05,-.05) rectangle (.05,.05);
    % \foreach \i in {.6,.2,-.2,-.6}
    % \filldraw (0,\i) circle (.05);
    \filldraw (0,.6) circle (.05) node[above]{$\scriptstyle x_{11}$};
    \filldraw (0,.2) circle (.05) node[above]{$\scriptstyle x_{12}$};
    \filldraw (0,-.2) circle (.05) node[below]{$\scriptstyle x_{21}$};
    \filldraw (0,-.6) circle (.05) node[below]{$\scriptstyle x_{22}$};
    \end{scope}
  \end{tikzpicture}
  \vspace{-0.5cm}
\end{wrapfigure}
This construction can be illustrated on graphs. The blowup of a topological directed graph is a 1-dimensional manifold (\ie a space in which every point has a neighborhood which looks like a line), which behaves like the original graph. The way this is constructed informally consists in replacing every singular point by as many points as there are ways to traverse the singularity, equipped with a suitable topology. For instance, in the lower graph on the right the point in the middle is replaced by four points because there are four ways to go from the left to the right, thus obtaining the space above (and a projection map to the original graph).
% \[
%   \begin{tikzpicture}[baseline={([yshift=-.5ex]current bounding box.center)}]
%     % \draw (-1,.5) -- (1,-.5);
%     % \draw (-1,-.5) -- (1,.5);
%     \draw
%     (-1,.5) edge["$\scriptstyle a_1$",pos=.25] (0,0)
%     (-1,-.5) edge["$\scriptstyle a_2$"',pos=.25] (0,0)
%     (0,0) edge["$\scriptstyle b_1$",pos=.75] (1,.5)
%     (0,0) edge["$\scriptstyle b_2$"',pos=.75] (1,-.5)
%     ;
%     \filldraw (0,0) circle (.05) node[above]{$\scriptstyle x$};
%   \end{tikzpicture}
%   \qquad\qquad\rightsquigarrow\qquad\qquad
%   \begin{tikzpicture}[baseline={([yshift=-.5ex]current bounding box.center)}]
%     % \draw (-1,.5) -- (1,-.5);
%     % \draw (-1,-.5) -- (1,.5);
%     \draw
%     (-1,.5) edge["$\scriptstyle a_1$",pos=.25] (0,0)
%     (-1,-.5) edge["$\scriptstyle a_2$"',pos=.25] (0,0)
%     (0,0) edge["$\scriptstyle b_1$",pos=.75] (1,.5)
%     (0,0) edge["$\scriptstyle b_2$"',pos=.75] (1,-.5)
%     ;
%     \filldraw[color=white] (-.05,-.05) rectangle (.05,.05);
%     % \foreach \i in {.6,.2,-.2,-.6}
%     % \filldraw (0,\i) circle (.05);
%     \filldraw (0,.6) circle (.05) node[above]{$\scriptstyle x_{11}$};
%     \filldraw (0,.2) circle (.05) node[above]{$\scriptstyle x_{12}$};
%     \filldraw (0,-.2) circle (.05) node[below]{$\scriptstyle x_{21}$};
%     \filldraw (0,-.6) circle (.05) node[below]{$\scriptstyle x_{22}$};
%   \end{tikzpicture}
% \]
This construction can be described as an operation transforming a precubical set into a relational one. Indeed, the upper graph is the realization of the relational graph $H:\gcat^\op\to\Rel$ with elements and faces
\begin{align*}
  H(0)&=\set{x_{11},x_{12},x_{21},x_{22}}
  &
  H(t)(a_1)&=\set{x_{11},x_{12}}
  &
  H(s)(b_1)&=\set{x_{11},x_{21}}
  \\
  H(1)&=\set{a_1,a_2,b_1,b_2}
  &
  H(t)(a_2)&=\set{x_{21},x_{22}}
  &
  H(s)(b_2)&=\set{x_{12},x_{22}}
\end{align*}
and no other boundaries (the point $x_{ij}$ corresponds to passing from~$a_i$ to~$b_j$).

\paragraph{Previous work.}
The idea of using functors to $\Rel$ as semantics of non-deterministic programs is quite old \cite{burstall1972algebraic}. The formal theory of relational presheaves was developed by Rosenthal in connection to nondeterministic automata theory~\cite{rosenthal1991free,rosenthal1995quantaloids} and independently by Ghilardi and Meloni in connection to modal logic~\cite{ghilardi1990modal,ghilardi1990relational}. Relational presheaves have found several applications in computer science, notably for labelled transition systems (see for example~\cite{sobocinski2015relational} for recent developments). The category of relational presheaves over $\ccat$ has been shown to be equivalent to several well-studied categories. In \cite{rosenthal1991free}, Rosenthal showed that this category is equivalent to the category of categories enriched in the free quantaloid over $\ccat$. Then, in \cite{niefield2004change,niefield2010lax}, using an analogue of the Grothendieck construction, Niefield established a correspondence between relational presheaves over $\ccat$ and faithful functors over $\ccat$, which led to a better understanding of exponentiability in the category of relational presheaves.

Higher dimensional automata (HDA) have been introduced as models for true concurrency~\cite{vanGlabbeek91,Pratt91} and bear close relationship with geometric models for concurrency~\cite{fajstrup2016directed}. The variant of \emph{partial} HDA was studied in \cite{dubut2019trees,fahrenberg2015partial} to model priorities, and constitutes a subclass of relational presheaves, corresponding to allowing partial functions for face maps.
% A partial presheaf over $\ccat$ is a lax functor from $\ccat^\op$ to the category of sets and partial functions, which is a subcategory of $\Rel$,
% so a partial presheaf is in particular a relational presheaf (and not necessarily an ordinary presheaf).
% For example, a partial graph is a relational graph where every edge has at most one source and one target.
% It was shown in \cite{fahrenberg2015partial} that, using this kind of objects, 
% it is easy to model a situation where for example two events $a$ and $b$ can occur at the same time,
% with the constraint that $a$ should \emph{start before} and \emph{finish after} $b$,
% which is a common real life situation.
%
% Directed algebraic topology
%
A way of realizing partial precubical sets (in topological spaces) was suggested in \cite{dubut2019trees}, based on the idea of completing a partial precubical set into an ordinary one.
% The idea is to `complete' a partial precubical set $P$ into an ordinary precubical set $P'$,
% which gives an monomorphism $P\hookrightarrow P'$.
% Then we can realize $P'$ in the usual way, and take the subspace corresponding to $P$ as its realization.
% This procedure is not satisfying for several reasons.
However, this procedure is not satisfying because it is not cocontinuous (consider for example the pushout square of \cite[\S 4.1]{dubut2019trees}; in fact we will see that it does not even preserve all colimits of partial presheaves which are colimits in the category of relational presheaves).

Connections between relational presheaves and categorical logic have existed since the introduction of the former.
For example, models of algebraic theories in $\Rel$ where already studied in \cite{rosenthal1995quantaloids}, and the internal logic of relational presheaves was studied in \cite{ghilardi1996relational}.
However, to the best of our knowledge, the use of categorical logic in order to study relational presheaves, as well as the definition of their realization, and the combinatorial definition of the blowup are original contributions of this paper.

\paragraph{Plan of the paper.}
Based on some classical results in categorical logic (\cref{sec:logic}), we define relational presheaves and show that they are models of a given theory (\cref{sec:main}). We then define a notion of realization, show that it can also be formulated in terms of models of a theory, and that it generalizes the usual notion of realization (\cref{sec:real}). Some concrete instances of realizations, such as in topological spaces, are then provided (\cref{sec:pcset}). We briefly introduce variants of relational presheaves and explain their use (\cref{sec:families}). Finally, we show that the blowup construction can be expressed as the realization of one on relational presheaves (\cref{sec:blowup}).

\section{Preliminaries: some categorical logic}
\label{sec:logic}
We begin by recalling some classical definitions and results on categorical logic that will be used in the following. We refer to~\cite{caramello2018theories} for details.
% A good reference for this section is \cite[\S 1.2, 1.3, 1.4, 2.1]{caramello2018theories}.
Let $\Sigma$ be a \emph{multisorted signature}, consisting of a set $\Sigma_0$ of sorts, $\Sigma_1$ of function symbols with sorted arities and a set $\Sigma_R$ of relation symbols with sorted arities.
% i.e. a set of sorts, relation symbols on finite products of sorts, and function symbols between such finite products.
Recall that a \emph{regular formula} is a first order formula built from atomic formulas using the connectives $\land$ and $\exists$.
A \emph{regular formula in context} is a finite tuple of free variables~$\mathbf{x}$ together with a regular formula $\phi$ with free variables in~$\mathbf{x}$, noted $\{\mathbf{x}.\phi\}$, and considered up to $\alpha$-conversion.
% More precisely, it is an equivalence class of such up to $\alpha$-equivalence, i.e. up to renaming the free variables.
A \emph{regular sequent} is a pair of formulas in context~$\phi$ and $\psi$, sharing the same context~$\mathbf{x}$, noted~$\phi\vdash_{\mathbf{x}}\psi$.
Its interpretation in full first order logic is $\forall\mathbf{x}.(\phi\Rightarrow\psi)$.
A \emph{regular theory} is a set of regular sequents, called the axioms of the theory.

\begin{definition}
  A \emph{cartesian formula} with respect to a regular theory $\T$ (or \emph{$\T$\nbd-carte\-sian formula}) is a regular formula such that every existential quantification
  % used in it
  is provably unique with respect to $\T$. 
  A \emph{cartesian theory} is a regular theory $\T$ such that its axioms can be well-ordered in such a way that every axiom is cartesian with respect to the theory consisting of the preceding axioms.
\end{definition}
In particular, a theory whose axioms are sequents of formulas built from atomic ones only using $\land$ is cartesian.
Now, for every cartesian (\ie finitely complete) category $\ccat$ and cartesian theory $\T$, we can define a category $\T$-$\Mod(\ccat)$ of \emph{models} of $\T$ in $\ccat$. Informally, a model consists in an interpretation of sorts as objects of~$\ccat$, of function symbols as morphisms, and of relation symbols as subobjects, in a way which respects the axioms.
Writing $\Lex$ for the category of cartesian categories and functors which are \emph{left-exact} (\ie product-preserving),
we have that $\T$-$\Mod(-)$ defines a functor on $\Lex$
and $\T$-$\Mod(\Set)$ is the usual category of set-based models of $\T$ and homomorphisms between them.
We say that a set-based model~$M$ of a cartesian theory $T$ is \emph{finitely presented} if there is a $\T$\nbd-cartesian formula in context $\{\mathbf{x}.\phi\}$ and a tuple $\mathbf{a}\in M$ such that $M\models\phi(\mathbf{a})$ and
for every model $N$ of $\T$, for every tuple $\mathbf{b}\in N$ such that $N\models \phi(\mathbf{b})$,
there is a unique homomorphism $M\to N$ sending $\mathbf{a}$ to $\mathbf{b}$.

\begin{proposition}
  \label{prop:syntactic}
  \label{syntactic}
  Let $\T$ be a cartesian theory. There is a cartesian category $\ccat_\T$, called the \emph{cartesian syntactic category} of $\T$, such that there is an equivalence
  \[
    \T\text{-}\Mod(\ccat)
    \isoto
    \Lex(\ccat_\T,\dcat)
  \]
  for every cartesian category $\dcat$, natural in $\dcat$.
  Moreover, $\ccat_\T^\op$ is equivalent to the category of finitely presented set-based models of $\T$.
\end{proposition}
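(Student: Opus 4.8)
The plan is to recall the standard construction of the cartesian syntactic category and then verify the universal property, since this is a classical theorem of categorical logic and the paper explicitly refers to \cite{caramello2018theories} for details. First I would construct $\ccat_\T$ explicitly: its objects are $\alpha$-equivalence classes of $\T$-cartesian formulas in context $\{\mathbf{x}.\phi\}$, and a morphism $\{\mathbf{x}.\phi\}\to\{\mathbf{y}.\psi\}$ is a $\T$-cartesian formula $\{\mathbf{x},\mathbf{y}.\theta\}$ which $\T$ proves to be the graph of a function from (the subobject cut out by) $\phi$ to (the subobject cut out by) $\psi$ — i.e.\ $\T$ proves $\theta\vdash_{\mathbf{x},\mathbf{y}}\phi\wedge\psi$, proves $\theta$ to be functional (single-valued) in $\mathbf{y}$ given $\mathbf{x}$, and proves totality $\phi\vdash_{\mathbf{x}}\exists\mathbf{y}.\theta$. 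Composition is relational composition (an existential quantification), identities are equality formulas, and one checks these are again $\T$-cartesian using that $\T$-cartesianness is closed under the relevant operations — this is where the ``well-ordered axioms, each cartesian over the preceding ones'' hypothesis is used, to guarantee provable uniqueness of the existentials introduced by composition. Then I would exhibit finite limits in $\ccat_\T$ concretely: the terminal object is the empty context with $\top$, and products/equalizers are given by conjunction of formulas in the concatenated (resp.\ same) context; the $\T$-cartesian fragment is exactly engineered so that these are well-defined.

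Next I would establish the equivalence $\T\text{-}\Mod(\dcat)\simeq\Lex(\ccat_\T,\dcat)$. In one direction, given a left-exact functor $F\colon\ccat_\T\to\dcat$, one reads off a $\T$-model in $\dcat$ by evaluating $F$ on the generating sorts, function symbols, and relation symbols (the latter appearing as subobjects via the canonical mono into the product of their argument sorts); the axioms of $\T$ hold in this model because each axiom, being a cartesian sequent, corresponds to an actual morphism or equality in $\ccat_\T$ that $F$ must preserve. In the other direction, given a $\T$-model $M$ in $\dcat$, the Kripke--Joyal / categorical semantics of cartesian formulas assigns to each formula-in-context an object of $\dcat$ (a subobject of a finite product), and to each provable functional formula a morphism, functorially and left-exactly; the key input is the soundness of categorical semantics for cartesian logic, namely that a cartesian formula is interpreted as an honest subobject and existentials that are provably unique are interpreted as genuine (monic-supporting) images. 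One then checks these two assignments are mutually inverse up to natural isomorphism, and that everything is natural in $\dcat$ — naturality being essentially automatic since both sides are defined by ``evaluate and transport along $G$''.

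Finally, for the last sentence, I would identify $\ccat_\T^\op$ with finitely presented models. Specialize the equivalence to $\dcat=\Set$: a left-exact functor $\ccat_\T\to\Set$ is the same as a set-based model, and the representable functor $\ccat_\T(\{\mathbf{x}.\phi\},-)$, being left-exact (representables out of a finitely complete category are left-exact), corresponds under the equivalence to some model $M_\phi$; by Yoneda, $\Hom_{\T\text{-}\Mod(\Set)}(M_\phi, N)\cong\Lex(\ccat_\T,\Set)(\ccat_\T(\{\mathbf{x}.\phi\},-),\,N)\cong N(\{\mathbf{x}.\phi\})$, i.e.\ the set of tuples $\mathbf{b}$ in $N$ satisfying $\phi$, naturally in $N$. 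This says precisely that $M_\phi$ is the model finitely presented by $\{\mathbf{x}.\phi\}$ in the sense defined above, so $\{\mathbf{x}.\phi\}\mapsto M_\phi$ is a fully faithful functor $\ccat_\T^\op\to\T\text{-}\Mod(\Set)$; essential surjectivity onto the finitely presented models is then just the definition of ``finitely presented'' unwound.

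I expect the main obstacle to be the bookkeeping in the first paragraph: verifying that the class of $\T$-cartesian formulas is genuinely closed under the operations needed to form composites and finite limits, so that $\ccat_\T$ is well-defined and actually cartesian. This is where the subtlety of the definition of cartesian theory (the well-ordering condition ensuring provable uniqueness of existentials relative to earlier axioms) really bites; everything downstream — the $\Lex$-equivalence and the identification with finitely presented models — is then a fairly mechanical application of categorical semantics and the Yoneda lemma. Since this is entirely standard, in the paper I would simply cite \cite{caramello2018theories} (or Makkai--Reyes / the theory of locally finitely presentable categories) rather than reproduce the construction.
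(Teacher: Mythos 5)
Your proposal is correct and matches the paper's treatment: the paper does not prove this classical result but cites \cite{caramello2018theories} and \cite[\S D1.4]{johnstone2002sketches} and merely recalls the same construction you describe (objects are $\T$-cartesian formulas in context, morphisms are $\T$-provably functional cartesian formulas up to provable equivalence). Your fuller sketch of the $\Lex$-equivalence and the Yoneda argument identifying $\ccat_\T^\op$ with the finitely presented models is the standard proof and is consistent with the definitions the paper uses.
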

For a detailed study of syntactic categories, see \cite[\S D1.4]{johnstone2002sketches}.
We mention that~$\ccat_\T$ is built in the following way:
its objects are formulas in context which are cartesian with respect to $\T$, 
and its morphisms are $\T$-provably functional cartesian formulas between them 
(more precisely, equivalence classes of such with respect to $\T$-provable equivalence). 
Cartesian theories correspond to essentially algebraic theories, for which similar theorems are established in~\cite{adamek1994locally}.
Finally, we mention that every cartesian category is the syntactic category of some cartesian theory \cite[Theorem 1.4.13]{caramello2018theories}, so that cartesian theories can actually be defined as cartesian categories.

\section{Relational presheaves as models of a cartesian theory}
\label{sec:main}
We now introduce relational presheaves and show that they form the category of models of a particular cartesian theory.
We write $\Rel$ for the category whose objects are sets and whose morphisms are relations.
We suppose fixed a small category~$\ccat$.

% % \begin{remark}\label{rem:par_rel}
% $\Par$ is a subcategory of $\Rel$, 
% since a partial function $p:A\to B$ can be seen as a relation $p\subseteq A\times B$ such that the first projection $p\to A$ is injective.
% % \end{remark}

\begin{definition}
  \label{relpsh}
  A \emph{relational presheaf} $P$ over $\ccat$ is a lax functor $\ccat^\op\to \Rel$.
  A \emph{morphism} $\alpha:P\To Q$ of relational presheaves is an oplax natural transformation.
  We write $\RelPsh(\ccat)$ for the corresponding category.
\end{definition}

\noindent
A relational presheaf~$P$ amounts to the data of a family of sets $P(c)$ indexed by objects $c\in\ccat$ and a family of relations $P(f)$ indexed by morphisms~$f$, with $P(f)\subseteq P(c)\times P(d)$ for $f:d\to c$, such that
\begin{align}
  \label{eq:relpsh-lax}
  \id_{P(c)}&\subseteq P(\id_c)
  &
  P(g)\circ P(f)&\subseteq P(f\circ g)
\end{align}
for every $c\in\ccat$ and every composable morphisms $f$ and $g$. We write $a\to _f b$ for $(a,b)\in P(f)$.
The first condition states that $P(\id_c)$ is reflexive for every $c\in\ccat$ and the second condition that if $a\to _f b$ and $b\to _g c$ then $a\to _{g\circ f} c$.
Similarly, a morphism $\alpha:P\To Q$ amounts to a family of functions $(\alpha_c:P(c)\to Q(c))_{c\in\ccat}$ such that $a\to _f b$ implies $\alpha_c(a)\to_f\alpha_d(b)$ for every $f:d\to c$.

\begin{definition}
  \label{Tcrel}
  We write $\Sigma_\ccat^\Rel$ for the signature whose sorts are the objects of~$\ccat$, with a relation symbol $R_f\subseteq c\times d$ for every morphism $f:d\to c$, and no function symbol.
  We write $\Tcrel$ for the theory with
  \begin{enumerate}
  \item for every object $c\in\ccat$, an axiom
    \[x=y  \vdash_{x:c,y:c}R_{\id_c}(x,y)\]
  \item for all morphisms $f:d\to c$, $g:e\to d$ of $\ccat$, an axiom
    \[R_f(x,y)\AND R_g(y,z) \vdash_{x:c,y:d,z:e}R_{g\circ f}(x,z) \]
  \end{enumerate}
\end{definition}

\begin{proposition}
  \label{relpsh-models}
  $\RelPsh(\ccat)$ is the category of set-based models of~$\Tcrel$.
\end{proposition}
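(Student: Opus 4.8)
The plan is to unfold both sides of the claimed equality and check that the data and the axioms match up. A set-based model of $\Tcrel$ consists, by the informal description of models recalled after \cref{syntactic}, of an interpretation of each sort $c$ as a set $M(c)$ and of each relation symbol $R_f \subseteq c \times d$ (for $f : d \to c$) as a subset $M(R_f) \subseteq M(c) \times M(d)$, subject to the two families of axioms; a homomorphism is a family of functions commuting with the interpretations, which for relation symbols means preserving membership. I would first spell out exactly this, so that a model is precisely a family of sets $(M(c))_{c \in \ccat}$ together with relations $M(R_f) \subseteq M(c) \times M(d)$ for each $f : d \to c$.

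Next I would match this against the unpacked form of \cref{relpsh} given in the excerpt: a relational presheaf $P$ is a family of sets $P(c)$ and relations $P(f) \subseteq P(c) \times P(d)$ for $f : d \to c$ satisfying \eqref{eq:relpsh-lax}. The dictionary is $P(c) = M(c)$ and $P(f) = M(R_f)$. It then remains to verify that the two axioms of $\Tcrel$, interpreted in $\Set$, say exactly \eqref{eq:relpsh-lax}. Axiom (1), $x = y \vdash_{x:c,y:c} R_{\id_c}(x,y)$, interpreted as $\forall x,y \in M(c).\, (x = y \Rightarrow (x,y) \in M(R_{\id_c}))$, is literally the statement that $M(R_{\id_c})$ is reflexive, i.e.\ $\id_{P(c)} \subseteq P(\id_c)$. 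Axiom (2), $R_f(x,y) \AND R_g(y,z) \vdash_{x:c,y:d,z:e} R_{g \circ f}(x,z)$, interpreted as $\forall x,y,z.\,((x,y) \in M(R_f) \wedge (y,z) \in M(R_g) \Rightarrow (x,z) \in M(R_{g\circ f}))$, is exactly $P(g) \circ P(f) \subseteq P(f \circ g)$ once one is careful that here $f : d \to c$ in $\ccat$ corresponds to going from $P(c)$ to $P(d)$, matching the contravariant convention in \cref{relpsh}. I would flag this bookkeeping of variances (contravariance of the presheaf versus the direction of the relation symbol $R_f \subseteq c \times d$) as the one genuinely delicate point, and make sure the composite $R_g \circ R_f$ in the sequent lines up with $P(f \circ g)$ rather than $P(g \circ f)$.

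Finally I would check the morphisms. A homomorphism of models $M \to N$ is a family $(\alpha_c : M(c) \to N(c))_c$ such that $(a,b) \in M(R_f)$ implies $(\alpha_c(a), \alpha_d(b)) \in N(R_f)$; under the dictionary this is exactly the condition ``$a \to_f b$ implies $\alpha_c(a) \to_f \alpha_d(b)$'' characterizing an oplax natural transformation as recorded after \cref{relpsh}. Since the two descriptions of objects agree on the nose and the two descriptions of morphisms agree on the nose (and composition and identities are inherited from $\Set$ in both cases), the categories $\RelPsh(\ccat)$ and $\Tcrel\text{-}\Mod(\Set)$ are not merely equivalent but isomorphic, which is what the proposition asserts. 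The proof is thus essentially a matter of carefully translating definitions; the only real work is in getting the $\ccat^{\op}$/$\Rel$ variance conventions to align with the direction in which the relation symbols and composition axioms of $\Tcrel$ are written.
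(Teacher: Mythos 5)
Your proposal is correct and follows essentially the same route as the paper's own proof: both simply unfold the definition of a set-based model of $\Tcrel$, match sorts to the sets $P(c)$ and relation symbols to the relations $P(f)$, observe that the two axiom families are exactly the inclusions of \eqref{eq:relpsh-lax}, and identify model homomorphisms with oplax natural transformations. Your extra care about the $\ccat^{\op}$ variance and the order of composition in the second axiom is a reasonable elaboration of the same argument, not a different method.
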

\begin{proof}
  A model~$P$ amounts to the data of a set~$P(c)$ for each sort~$c$ and a relation~$f\subseteq P(c)\times P(d)$ for every relation $R_f$ with $f:c\to d$, and the two families of axioms ensure that the inclusions of \eqref{eq:relpsh-lax} are satisfied. This is thus precisely a relational presheaf in the sense of \cref{relpsh}. Similarly, model homomorphisms correspond to oplax natural transformations.\qed
\end{proof}
Recall that locally finitely presentable categories are exactly the categories of models of cartesian theories~\cite[Definition 9.2 and Theorem 9.8]{kelly1982structures}.
% \SM{expliquer pourquoi c'est un corollaire par une réf}

\begin{corollary}
  \label{relpsh-lfp}
  For any category $\ccat$, $\RelPsh(\ccat)$ is locally finitely presentable. Hence, it is complete and cocomplete.
\end{corollary}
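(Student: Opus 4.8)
The plan is to deduce this immediately from \cref{relpsh-models} together with the recalled characterization of locally finitely presentable categories as exactly the categories of models of cartesian theories. The only thing that requires an argument is that $\Tcrel$ is genuinely a \emph{cartesian} theory; once this is established, the rest is formal.

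First I would inspect the axioms of $\Tcrel$ given in \cref{Tcrel}. The first family, $x=y\vdash_{x:c,y:c}R_{\id_c}(x,y)$, has an atomic formula (an equality) as antecedent and an atomic formula as succedent. The second family, $R_f(x,y)\AND R_g(y,z)\vdash_{x:c,y:d,z:e}R_{g\circ f}(x,z)$, has a conjunction of atomic formulas as antecedent and an atomic formula as succedent. In particular, no existential quantifier occurs anywhere in $\Tcrel$, so there is nothing to verify for the uniqueness-of-witnesses condition and every formula in play is trivially $\Tcrel$-cartesian. By the remark following the definition of cartesian theory (``a theory whose axioms are sequents of formulas built from atomic ones only using $\land$ is cartesian''), $\Tcrel$ is cartesian. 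I would also record that, since $\ccat$ is small, $\Sigma_\ccat^\Rel$ has a set of sorts and of relation symbols and $\Tcrel$ has a set of axioms, so this is a legitimate cartesian theory to which the size-sensitive results apply.

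Then I would combine this with \cref{relpsh-models}, which identifies $\RelPsh(\ccat)$ with the category of set-based models of $\Tcrel$, and with the cited fact that categories of models of cartesian theories are precisely the locally finitely presentable categories; this yields local finite presentability of $\RelPsh(\ccat)$. Completeness and cocompleteness then follow from the standard fact that every locally finitely presentable category is complete and cocomplete.

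There is no real obstacle here: the statement is a formal consequence of \cref{relpsh-models}, and the only substantive—but entirely routine—point is the syntactic observation that the two axiom schemes of $\Tcrel$ use neither disjunction nor existential quantification, hence land in the purely conjunctive (``Horn-like'') fragment, which is a fortiori cartesian.
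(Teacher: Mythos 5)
Your proposal is correct and matches the paper's (implicit) argument exactly: the axioms of $\Tcrel$ are sequents of conjunctions of atomic formulas, hence the theory is cartesian by the remark following the definition, and local finite presentability then follows from \cref{relpsh-models} together with the cited characterization of locally finitely presentable categories as categories of models of cartesian theories. The observation that smallness of $\ccat$ guarantees a set-sized signature and axiom set is a reasonable extra precaution, but otherwise there is nothing to add.
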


In the following, we write $\tilde{\ccat}$ for the cartesian syntactic category of $\Tcrel$. By \cref{syntactic,relpsh-models}, the category of relational presheaves is precisely $\Lex(\tilde\ccat,\Set)$.
Every object $x$ of $\tilde{\ccat}$ can be seen as a relational presheaf $\yo_{\tilde{\ccat}^\op}(x)$.

% This category has all finite limits, so that its opposite has finite colimits.
% By \cref{prop:syntactic}, we deduce
% \[
  % \RelPsh(\ccat)
  % \isoto
  % \Tcrel\text{-}\Mod(\Set)
  % \isoto
  % \Lex(\tilde{\ccat},\Set)
% \]
% \SM{c'est très étrange d'avoir cela ici : c'est bien connu que les modèles de théories cartesiennes sont les lfp}
% In particular, $\RelPsh(\ccat)=\Ind((\tilde{\ccat})^\op)$ the ind-completion of $(\tilde{C})^\op$, 
% and has all small colimits by \cite[Proposition 6.1.18]{kashiwaracategories}.
% Note also that the inclusion $(\tilde{\ccat})^\op\to \Ind((\tilde{\ccat})^\op)$ is right exact by \cite[Corollary 6.1.6]{kashiwaracategories}.
%

\begin{example}
  Consider the category $\gcat$ of the introduction, whose (relational) pre\-sheaves are (relational) directed graphs. The signature $\Sigma_\gcat^\Rel$ contains two sorts~$0$ and~$1$ and two relations $R_s,R_t\subseteq 1\times 0$. The category~$\tilde\gcat$ contains the following morphisms:
  \begin{center}
    \begin{tikzcd}[row sep=small]
      & {\{(x:1,y:0).R_s(x,y)\} } \\
      {\{(y:0).\top\}} & {\{(x:1,y:0).\top\}} & {\{(x:1).\top\}} \\
      & { \{(x:1,y:0).R_t(x,y)\}}
      \arrow[hook, from=1-2, to=2-2]
      \arrow[from=2-2, to=2-1]
      \arrow[from=2-2, to=2-3]
      \arrow[hook', from=3-2, to=2-2]
    \end{tikzcd}
  \end{center}
  In a model~$G$, \ie a left exact functor $\tilde\gcat\to\Set$, the object on the left and on the right are respectively mapped to sets~$G_0$ and $G_1$, the object in the middle is mapped to the product $G_0\times G_1$, and the top and bottom object are mapped to subsets $G_s,G_t\subseteq G_0\times G_1$. We see that the object $\set{(x:1,y:0).R_s(x,y)}$ represents the ``proofs of relations'' between $0$ and $1$ via $s$. 
  % The condition of left exactness then ensures that it is mapped to a subset of the product of the images of $0$ and $1$, i.e.~that every related pair has only one proof of relation.
\end{example}

\begin{remark}
  If we apply the same procedure to the presheaf category $\Psh(\ccat)$,
  seeing it as the category of models of a theory $\T_\ccat$ similar to the relational case but with function symbols instead of relation symbols,
  the cartesian syntactic category will just be the free finite limit completion of $\ccat^\op$.
  In fact, $\tilde\ccat$ is also some kind of completion of (a modified version of) $\ccat$, see \cref{cor:completion}.
\end{remark}

\section{Realizing relational presheaves}
\label{sec:real}
A \emph{realization} of a category $\ccat$ in a cocomplete category $\dcat$ is a functor $\ccat\to\dcat$ which is cocontinuous, \ie colimit-preserving.
The fact that such a functor preserves colimits can be understood as the fact that the image of a ``complex'' element can be obtained by gluing the images of ``simpler'' elements.
We write $\coCont(\ccat,\dcat)$ for the corresponding category. Typically, for a presheaf category $\ccat=\Psh(\bcat)$, one can define a realization $\Psh(\bcat)\to\dcat$ by starting from a functor $F:\bcat\to\dcat$ and taking its left Kan extension~$F_!$ along the Yoneda embedding $\yo:\bcat\to\Psh(\bcat)$:
\[
  \begin{tikzcd}[sep=small]
    &\dcat\\
    \bcat\ar[ur,"F"]\ar[rr,"\yo"']&\ar[u,phantom,"\To"]&\Psh(\bcat)\ar[ul,dashed,"F_!"']
  \end{tikzcd}
\]
More explicitly, the realization can be computed on a presheaf $P\in\Psh(\bcat)$ as the colimit $F_!(P)=\colim_{(b,p)\in\int P}F(b)$, where $\int P$ denotes the category of elements of~$P$~\cite{mac1998categories}.
The original notion of geometric realization was essentially defined in this way with $\bcat$ being the simplicial category and $\dcat$ the category of topological spaces. Given a small category~$\ccat$ and a cocomplete category~$\dcat$, we explain how to construct a realization functor for the category $\RelPsh(\ccat)$ of relational presheaves and show that the situation is essentially analogous to the case of presheaves. We begin by observing that constructing a realization amounts to constructing a model:

\begin{proposition}
  \label{real-model}
  The category of realizations of $\RelPsh(\ccat)$ in $\dcat$ is isomorphic to the category~$\Lex(\tilde\ccat,\dcat^\op)$ of models of $\Tcrel$ in $\dcat^\op$.
\end{proposition}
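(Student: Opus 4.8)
The plan is to chain together the two key facts established earlier. First, a realization of $\RelPsh(\ccat)$ in $\dcat$ is by definition a cocontinuous functor $\RelPsh(\ccat) \to \dcat$, so the category of realizations is $\coCont(\RelPsh(\ccat), \dcat)$. Since $\RelPsh(\ccat)$ is locally finitely presentable by \cref{relpsh-lfp}, I would invoke the standard fact that for a locally finitely presentable category $\ecat$, cocontinuous functors out of $\ecat$ are equivalent to left-exact functors out of the opposite of the full subcategory of finitely presentable objects. Concretely: the finitely presentable objects of $\RelPsh(\ccat)$ are (by \cref{syntactic} applied to $\Tcrel$) exactly the objects of $\tilde\ccat^\op$, and $\RelPsh(\ccat)$ is the free cocompletion under filtered colimits of $\tilde\ccat^\op$ (the Ind-completion). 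Dualizing, a cocontinuous functor $\RelPsh(\ccat) = \mathrm{Ind}(\tilde\ccat^\op) \to \dcat$ corresponds precisely to a functor $\tilde\ccat^\op \to \dcat$ that sends finite colimits in $\tilde\ccat^\op$ to colimits in $\dcat$ — equivalently, a functor $\tilde\ccat \to \dcat^\op$ preserving finite limits, i.e. an object of $\Lex(\tilde\ccat, \dcat^\op)$.

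So the steps, in order, are: (1) unwind the definition so that ``category of realizations'' means $\coCont(\RelPsh(\ccat), \dcat)$; (2) recall from \cref{syntactic} and \cref{relpsh-models} that $\tilde\ccat^\op$ is the category of finitely presentable objects of $\RelPsh(\ccat)$, and that $\RelPsh(\ccat)$ is its free filtered-colimit cocompletion; (3) apply the universal property of this cocompletion: a cocontinuous functor $\mathrm{Ind}(\tilde\ccat^\op) \to \dcat$ is the left Kan extension of its restriction along $\tilde\ccat^\op \hookrightarrow \mathrm{Ind}(\tilde\ccat^\op)$, and such restrictions are exactly the finitely-cocontinuous functors $\tilde\ccat^\op \to \dcat$ (using that $\dcat$ is cocomplete so the Kan extension exists); (4) identify a finitely-cocontinuous functor $\tilde\ccat^\op \to \dcat$ with a finitely-continuous functor $\tilde\ccat \to \dcat^\op$, i.e. an element of $\Lex(\tilde\ccat, \dcat^\op)$, and check this identification is functorial in both directions and inverse; (5) read off from \cref{syntactic} that $\Lex(\tilde\ccat, \dcat^\op) \cong \Tcrel\text{-}\Mod(\dcat^\op)$, giving the statement. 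One should be slightly careful about isomorphism versus equivalence: \cref{syntactic} gives an equivalence, but since we are comparing functor categories on the nose (realizations are literally cocontinuous functors, models are literally left-exact functors), the composite identification can be arranged to be an isomorphism — or one simply states it up to equivalence and notes that the paper's phrasing is the honest isomorphism coming directly from the universal property of Ind-completion plus the definition of $\tilde\ccat$.

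The main obstacle is step (3): justifying that every cocontinuous functor out of $\RelPsh(\ccat)$ is determined by (and freely extends) its restriction to the finitely presentable objects. This is the content of the statement that $\RelPsh(\ccat) \simeq \mathrm{Ind}(\tilde\ccat^\op)$ together with the universal property of $\mathrm{Ind}$; it is standard (it is essentially \cite[Theorem 1.46]{adamek1994locally} combined with the fact that $\mathrm{Ind}(\acat)$ is the free cocompletion of $\acat$ under filtered colimits, \cite[Proposition 1.27]{adamek1994locally}), but it is where the real work sits, and it is the step most worth citing precisely rather than reproving. Everything else — the $2$-categorical op-duality of step (4) and the invocation of \cref{syntactic} in step (5) — is bookkeeping. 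A cleaner packaging that avoids invoking $\mathrm{Ind}$ explicitly would be to combine \cref{syntactic} (which says $\Tcrel\text{-}\Mod(\dcat^\op) \simeq \Lex(\tilde\ccat,\dcat^\op)$ naturally in $\dcat^\op$) with a general lemma that $\coCont(\mathrm{Lex}(\acat^\op,\Set), \dcat) \simeq \Lex(\acat, \dcat^\op)$ for any small finitely-complete $\acat$ and cocomplete $\dcat$; taking $\acat = \tilde\ccat$ and using $\RelPsh(\ccat) = \Lex(\tilde\ccat,\Set)$ then gives the result immediately. I would present it in this second form, citing the lemma to the standard references on lfp categories.
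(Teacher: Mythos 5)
Your proposal is correct and follows essentially the same route as the paper's proof: identify $\RelPsh(\ccat)$ with $\Ind(\tilde\ccat^\op)$ via \cref{syntactic} and \cref{relpsh-models}, use the universal property of the Ind-completion to trade cocontinuous functors out of it for finite-colimit-preserving functors on $\tilde\ccat^\op$ (the paper cites \cite[Proposition 1.45(ii)]{adamek1994locally} for exactly this step), dualize to land in $\Lex(\tilde\ccat,\dcat^\op)$, and conclude with \cref{syntactic}. No gaps; the only cosmetic difference is that you phrase the key step via left Kan extension along the inclusion of finitely presentable objects rather than citing the $\coCont(\Ind(-),\dcat)\simeq\Rex(-,\dcat)$ statement directly.
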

\begin{proof}
  We have that relational presheaves form the Ind-completion (whose definition is recalled below) of $\tilde\ccat^\op$:
  \begin{align*}
    \RelPsh(\ccat)
    &\isoto\Tcrel\text{-}\Mod(\Set)
    &&\text{by \cref{prop:syntactic}}
    \\
    &\isoto\Lex(\tilde{\ccat},\Set)
    &&\text{by \cref{relpsh-models}}
    \\
    &\isoto\Ind(\tilde\ccat^\op)
    &&%\text{by (\ref{ind})}
  \end{align*}
  The last step can be justified as follows. Recall that the \emph{ind-completion} $\Ind(\ccat)$ of a category $\ccat$ is its free cocompletion by filtered colimits \cite{kashiwaracategories}. It can be obtained by first forming the free cocompletion $\Psh(\ccat)$ of $\ccat$ and then taking the full subcategory on presheaves which are filtered colimits of representables, or equivalently on presheaves whose category of elements is filtered, or equivalently on functors $\ccat^\op\to\Set$ which are \emph{flat} \cite[Theorem VII.6.3]{maclane2012sheaves}. It is a standard fact that if $\ccat^\op$ has all small limits, then flat functors can equivalently be defined as left-exact functors \cite[Corollary VII.6.4]{maclane2012sheaves}, \ie $\Lex(\ccat^\op,\Set)\isoto\Ind(\ccat)$. Recall that $\tilde{C}$ indeed has all finite limits, by Proposition~\ref{prop:syntactic}.

  Finally, from the above sequence of equivalences, we deduce
  \begin{align*}
    \coCont(\RelPsh(\ccat),\dcat)
    &\isoto\coCont(\Ind(\tilde\ccat^\op),\dcat)
    &&\text{by the above}
    \\
    &\isoto\Rex(\tilde{\ccat}^\op,\dcat)
    &&\text{by \cite[Proposition 1.45 (ii)]{adamek1994locally}}
    \\
    &\isoto\Lex(\tilde{\ccat},\dcat^\op)
    &&\text{by duality}
    \\
    &\isoto\Tcrel\text{-}\Mod(\dcat^\op)
    &&\text{by \cref{syntactic}}
  \end{align*}
  This concludes the proof.\qed
\end{proof}
% Now we show how to see a realization of $\RelPsh(\ccat)$ in a cocomplete category $\dcat$ as a model of $\Tcrel$ in $\dcat^\op$.
% By Proposition 1.45 (ii) of \cite{adamek1994locally}, precomposition induces an equivalence
% \[\coCont(\Ind((\tilde{\ccat})^\op),\dcat)\cong\Rex((\tilde{\ccat})^\op,\dcat)\]
% which in turn is equivalent to $\Lex(\tilde{\ccat},\dcat^\op)\cong\Tcrel\text{-}\Mod(\dcat^\op)$. 
As a consequence, in order to define a realization of $\RelPsh(\ccat)$ in $\dcat$, we need to construct a model~$M$ of $\Tcrel$ in $\dcat^\op$. This amounts to the following data:

\begin{proposition}
  \label{Tmod}
  A model in $\Mod[\Tcrel](\dcat^\op)$ is uniquely determined by the following data:
  \begin{enumerate}[A.]
  \item for every object $c\in\ccat$, an object $M(c)$ of $\dcat$,
  \item for every morphism $f:d\to  c$ of $\ccat$, an object $M(R_f)$ of $\dcat$ and morphisms $\iota_{f}^0:M(c)\to M(R_f)$ and $\iota_{f}^1:M(d)\to M(R_f)$,
  \end{enumerate} 
  such that for every $f:d\to c$ and $g:e\to d$:
  \begin{enumerate}
    \setcounter{enumi}{-1}
  \item\label{Tmod-mod} $\iota_f:=(\iota_f^0,\iota_f^1):M(c)\sqcup M(d)\to M(R_f)$ is an epimorphism,
  \item\label{Tmod-refl} the codiagonal $M(c)\sqcup M(c)\to M(c)$ factors through $\iota_{\id_c}$,
  \item\label{Tmod-trans} the canonical morphism $M(e)\sqcup M(c)\sqcup M(d)\to M(R_f)\sqcup_{M(d)}M(R_g)$ factors through $\iota_{f\circ g}\sqcup\id_{M(d)}$.%\SM{on ignore l'iso canonique sur la source, il vaudrait mieux éviter et mettre plutôt $M(e)\sqcup M(c)\sqcup M(d)\to M(R_f)\sqcup_{M(d)}M(R_g)$ ?}
  \end{enumerate}
\end{proposition}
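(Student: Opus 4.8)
The plan is to unfold the definition of a model of the cartesian theory $\Tcrel$ in the finitely complete category $\dcat^\op$ (which is finitely complete precisely because $\dcat$ is cocomplete) and to transport each ingredient across the self-duality of the picture, turning finite products into finite coproducts, monomorphisms into epimorphisms, and pullbacks into pushouts. Recall that a model of $\Tcrel$ in a finitely complete category $\ecat$ consists of an object per sort and a subobject per relation symbol (there are no function symbols), such that every axiom sequent $\phi\vdash_{\mathbf x}\psi$ is valid, i.e.\ $\llbracket\mathbf x.\phi\rrbracket\le\llbracket\mathbf x.\psi\rrbracket$ as subobjects of $\llbracket\mathbf x\rrbracket$. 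With $\ecat=\dcat^\op$: an object for the sort $c$ is an object $M(c)$ of $\dcat$, which is datum~A; the product $M(c)\times M(d)$ in $\dcat^\op$ is the coproduct $M(c)\sqcup M(d)$ in $\dcat$, and a subobject of it in $\dcat^\op$ is (an equivalence class of) a monomorphism into it in $\dcat^\op$, equivalently an epimorphism $M(c)\sqcup M(d)\to M(R_f)$ in $\dcat$, which by the universal property of the coproduct is exactly a pair $\iota_f^0\colon M(c)\to M(R_f)$, $\iota_f^1\colon M(d)\to M(R_f)$ whose copairing is an epimorphism --- that is, datum~B together with condition~\ref{Tmod-mod}. (That one works with equivalence classes is what the word ``uniquely'' in the statement refers to: $M(R_f)$ is pinned down only up to isomorphism.) It then remains to check that the two axiom schemes of $\Tcrel$ translate precisely into conditions~\ref{Tmod-refl} and~\ref{Tmod-trans}.

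For this I will use the standard facts that weakening a formula to a larger context pulls its subobject back along the appropriate projection, that $\llbracket\phi\AND\psi\rrbracket$ is the intersection (pullback) of $\llbracket\phi\rrbracket$ and $\llbracket\psi\rrbracket$, that $\llbracket x=y\rrbracket$ in the context $x,y\colon c$ is the diagonal, and that a sequent $\phi\vdash_{\mathbf x}\psi$ holds in $\dcat^\op$ if and only if, in $\dcat$, the quotient map $\llbracket\mathbf x\rrbracket\to\llbracket\phi\rrbracket$ factors through the quotient map $\llbracket\mathbf x\rrbracket\to\llbracket\psi\rrbracket$ --- this being the image in $\dcat$ of the subobject inclusion $\llbracket\phi\rrbracket\le\llbracket\psi\rrbracket$, where one must be careful that it goes this way and not the other. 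For axiom~(1), $\llbracket x=y\rrbracket$ dualizes to the codiagonal $M(c)\sqcup M(c)\to M(c)$ and $\llbracket R_{\id_c}(x,y)\rrbracket$ to $\iota_{\id_c}$, so the axiom says exactly that the codiagonal factors through $\iota_{\id_c}$, which is condition~\ref{Tmod-refl}. For axiom~(2), work in the context $x\colon c,y\colon d,z\colon e$, whose interpretation in $\dcat$ is $M(c)\sqcup M(d)\sqcup M(e)$; there $\llbracket R_f(x,y)\rrbracket$ dualizes to the pushout of $\iota_f$ along a coproduct injection, namely $\iota_f\sqcup\id_{M(e)}$, likewise $\llbracket R_g(y,z)\rrbracket$ to $\id_{M(c)}\sqcup\iota_g$, and $\llbracket R_{f\circ g}(x,z)\rrbracket$ to $\iota_{f\circ g}\sqcup\id_{M(d)}$. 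Since intersection of subobjects in $\dcat^\op$ dualizes to pushout of quotients in $\dcat$, the subobject $\llbracket R_f(x,y)\AND R_g(y,z)\rrbracket$ dualizes to the pushout of $\iota_f\sqcup\id_{M(e)}$ and $\id_{M(c)}\sqcup\iota_g$ over $M(c)\sqcup M(d)\sqcup M(e)$; a short computation shows that the only non-trivial gluing here happens along the $M(d)$ summand, identifying this pushout with $M(R_f)\sqcup_{M(d)}M(R_g)$ (glued over $\iota_f^1$ and $\iota_g^0$) and its quotient map with the canonical morphism of condition~\ref{Tmod-trans}. Thus axiom~(2) says exactly that this canonical morphism factors through $\iota_{f\circ g}\sqcup\id_{M(d)}$, which is condition~\ref{Tmod-trans}. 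Reading all these equivalences backwards shows conversely that any data A and~B satisfying conditions \ref{Tmod-mod}--\ref{Tmod-trans} arises from a unique model.

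I expect the main obstacle to be bookkeeping rather than conceptual: on the one hand, keeping the direction of the factorizations correct through the passage to the opposite category (it is genuinely easy to get ``factors through'' the wrong way around), and on the other, carrying out the pushout computation identifying $(M(R_f)\sqcup M(e))\sqcup_{M(c)\sqcup M(d)\sqcup M(e)}(M(c)\sqcup M(R_g))$ with $M(R_f)\sqcup_{M(d)}M(R_g)$ carefully enough to see that the $M(c)$ and $M(e)$ summands are merely carried along while only $M(d)$ is genuinely glued.
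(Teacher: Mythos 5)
Your proof is correct and follows essentially the same route as the paper's: unfold the categorical semantics of $\Tcrel$ in the finitely complete category $\dcat^\op$ (sorts as objects, relation symbols as subobjects of binary products, which dualize to epimorphisms out of coproducts in $\dcat$, i.e.\ copairings $\iota_f$), and translate validity of the two axiom schemes into the factorization conditions by dualizing the subobject inclusions. You merely carry out explicitly the weakening/intersection bookkeeping and the pushout computation identifying the interpretation of $R_f(x,y)\land R_g(y,z)$ with $M(R_f)\sqcup_{M(d)}M(R_g)$, which the paper's terser proof delegates to the cited reference.
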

\begin{proof}
  The data of $M(c)$ and $M(R_f$) is precisely the interpretation of the sorts and relation symbols of the signature (there is no function symbol).
  The conditions can be explained as follows, see \cite[\S 1.3.3]{caramello2018theories} for details. 
  Condition \ref{Tmod-mod} ensures that $M$ is a $\Sigma_\ccat^\Rel$\nbd-structure.
  Indeed, $M(c)$ is the object
  % (intuitively, the `set')
  of elements of sort $c$, and~$M(R_f)$ represents $\setof{(x,y)\in M(c)\times M(d)}{x\to_f y}$,
  so it should be a subobject of $M(c)\times M(d)$ in $\dcat^\op$,
  which becomes an epimorphism out of the coproduct in~$\dcat$, which by universal property of the coproduct has to be characterized by a pair of morphisms $\iota_f^0$ and $\iota_f^1$.
  % which demystifies condition 1. 
  The other two conditions ensure that~$M$ satisfies the axioms of $\Tcrel$.
  Every formula in context $\{\mathbf{x}.\phi\}$ gives rise to a subobject $M(\{\mathbf{x}.\phi\})$.
  An axiom $\phi\vdash_\mathbf{x}\psi$ is satisfied in a structure if and only if
  the subobject corresponding to $\phi$ factors through the one corresponding to $\psi$,
  and we must again write this in the opposite category,.
  This gives conditions~\ref{Tmod-refl} and~\ref{Tmod-trans} respectively associated to the two families of axioms of \cref{Tcrel} (which, in turn, correspond to the two conditions of \eqref{eq:relpsh-lax}).
  \qed
\end{proof}

We define $\ccat_\Rel$ as the full subcategory of $\tilde{\ccat}$ on the objects of the form $\{(x:c).\top\}$ and $\{(x:c,y:d).R_f(x,y)\}$ for every $f:d\to c$. This category is interesting because functors out of it encode precisely the data needed to describe a model:

\begin{lemma}
  \label{Tmod-data}
  The data of A and B in \cref{Tmod} amounts precisely to defining a functor $M:\ccat_\Rel\to\dcat^\op$.
\end{lemma}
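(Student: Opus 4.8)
The plan is to make the category $\ccat_\Rel$ completely explicit using the description of the cartesian syntactic category from \cref{prop:syntactic}, and then observe that a functor $\ccat_\Rel\to\dcat^\op$ records precisely the data A and B. Throughout, write $\langle c\rangle:=\{(x:c).\top\}$ and $\langle f\rangle:=\{(x:c,y:d).R_f(x,y)\}$ for the two kinds of objects of $\ccat_\Rel$, where $f:d\to c$ in $\ccat$.

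The heart of the argument is the computation of the hom-sets of $\ccat_\Rel$. Recall that a morphism of $\tilde\ccat$ is an equivalence class of $\Tcrel$-provably functional cartesian formulas between the relevant formulas in context, and that $\Sigma_\ccat^\Rel$ contains no function symbols. The point is that in $\Tcrel$ no relation symbol $R_f$ is provably single-valued, that the only provably total relations are the $R_{\id_c}$ (witnessed by $y=x$), and that iterating the composition axioms of \cref{Tcrel} never entails a containment $R_f\subseteq R_g$ for $f\neq g$. Hence, up to $\Tcrel$-provable equivalence, a provably functional formula between objects of $\ccat_\Rel$ is a conjunction of equalities between variables, and one reads off that $\ccat_\Rel$ is generated by the identities together with: for each $f:d\to c$, two ``projections'' $p_f^0:\langle f\rangle\to\langle c\rangle$ and $p_f^1:\langle f\rangle\to\langle d\rangle$; and, for each object $c$, a ``reflexivity'' morphism $\delta_c:\langle c\rangle\to\langle\id_c\rangle$ (the instance of axiom~(1) of \cref{Tcrel}, identifying $\langle c\rangle$ with the diagonal subobject of $\langle\id_c\rangle$), subject only to the relations $p_{\id_c}^0\circ\delta_c=p_{\id_c}^1\circ\delta_c=\id_{\langle c\rangle}$.

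Granting this presentation, a functor $M:\ccat_\Rel\to\dcat^\op$ is the same as: an object $M(c):=M(\langle c\rangle)$ of $\dcat$ for every $c\in\ccat$, which is the data A; an object $M(R_f):=M(\langle f\rangle)$ of $\dcat$ for every $f:d\to c$, together with the images of $p_f^0$ and $p_f^1$, that is, morphisms $\iota_f^0:M(c)\to M(R_f)$ and $\iota_f^1:M(d)\to M(R_f)$ in $\dcat$, which is the data B. Since no two non-identity projections are composable, functoriality imposes nothing further on the $\iota_f^\epsilon$, and the image of each $\delta_c$ is then determined by functoriality (it must be the common section of $\iota_{\id_c}^0$ and $\iota_{\id_c}^1$). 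Conversely any choice of data A and B assembles into such a functor, which establishes the claimed bijection.

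The step I expect to be the main obstacle is the hom-set computation itself — ruling out every ``unexpected'' provably functional formula between the objects of $\ccat_\Rel$, i.e. checking that no $R_f$ is provably functional and that no composition of face relations (axiom~(2)) forces a new relation among the $R_g$ — together with the bookkeeping of the degenerate morphisms $\delta_c$ and their composites with the projections, which are the only morphisms of $\ccat_\Rel$ not built from the projections and are precisely what makes this lemma dovetail with conditions~\ref{Tmod-mod} and~\ref{Tmod-refl} of \cref{Tmod}.
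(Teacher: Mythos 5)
Your overall strategy---make $\ccat_\Rel$ explicit and read off that a functor out of it records exactly the data A and B---is the same as the paper's, which describes $\ccat_\Rel$ as $\ccat$ with every morphism replaced by a span, i.e.\ with only the two projections $\pi^0_f:\langle f\rangle\to\langle\cod f\rangle$ and $\pi^1_f:\langle f\rangle\to\langle\dom f\rangle$ as non-identity morphisms, no two of which are composable. Your hom-set computation, however, additionally produces the diagonal morphisms $\delta_c:\langle c\rangle\to\langle\id_c\rangle$ (the formula $x_1=x\land x_2=x$ is indeed provably functional into $\{(x_1:c,x_2:c).R_{\id_c}(x_1,x_2)\}$ thanks to axiom~(1)), and this is where your argument breaks. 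The image $M(\delta_c)$ is \emph{not} determined by functoriality: the relations $\pi^0_{\id_c}\circ\delta_c=\pi^1_{\id_c}\circ\delta_c=\id$ only force $M(\delta_c)$ to be a common section of $M(\pi^0_{\id_c})$ and $M(\pi^1_{\id_c})$ in $\dcat^\op$, equivalently a common retraction of $\iota^0_{\id_c}$ and $\iota^1_{\id_c}$ in $\dcat$, and a morphism may have many retractions or none. So with the category you present, a functor $\ccat_\Rel\to\dcat^\op$ carries strictly more information than A and B (a chosen retraction for each $c$), and not every choice of A and B extends to one. The claimed bijection therefore does not follow from your own description of the hom-sets.

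The statement holds precisely when the $\delta_c$ are absent, i.e.\ when the only non-identity morphisms are the projections out of each $\langle f\rangle$---this is what the paper's proof asserts and uses (``no two such morphisms are composable, so we do not have to close under composition''). To repair your argument you must either exclude $\delta_c$ (your computation shows it does live in the \emph{full} subcategory of $\tilde\ccat$ on these objects, so this amounts to working with the subcategory generated by the projections), or note that uniqueness of the retraction only becomes available once condition~\ref{Tmod-mod} of \cref{Tmod} makes $\iota_{\id_c}$ an epimorphism---but that condition is not part of the data A and B that this lemma is about. Either way, the sentence ``the image of each $\delta_c$ is then determined by functoriality'' is the gap.
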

\begin{proof}
  Since $\tilde{\ccat}^\op$ is the category of finitely presented models of $\Tcrel$,
  this category is just the category $\ccat$ where we have replaced every morphism by a span. 
  More precisely, $\Ob(\ccat_\Rel)=\Ob(\ccat)\cup\Mor(\ccat)$, 
  and there is exactly one morphism $\pi_f^1:f\to \dom(f)$ and one morphism $\pi_f^0:f\to \cod(f)$ for every $f\in\Mor(\ccat)$.
  In particular, we do not have to close by composition, because no two such morphisms are composable.\qed
\end{proof}

% Now given a model $M$, we build a functor $M:\ccat_\Rel\to\dcat^\op$ in the following way:
% \[M(\{(x:c).\top\}):=M(c),\; M(\{(x:c,y:d).R_f(x,y)\}):=M(R_f),\; M(\pi_f^i):=\iota_f^i\]
% This defines a faithful functor $F_\dcat:\Lex(\tilde{\ccat},\dcat^\op)\to\Fun(\ccat_\Rel,\dcat^\op)$ (which is in fact precomposition by the inclusion $\ccat_\Rel\hookrightarrow \tilde{\ccat}$).
By precomposition with the inclusion functor $I:\ccat_\Rel\hookrightarrow \tilde{\ccat}$, we obtain a functor $F_\dcat:\Lex(\tilde{\ccat},\dcat^\op)\to\Fun(\ccat_\Rel,\dcat^\op)$ sending a model $M$ (\ie a realization, by \cref{real-model}) to the functor $M\circ I$.
In particular, we simply write $F:\Lex(\tilde\ccat,\Set)\to\Psh(\ccat^\op_\Rel)$ for $F_\Set$.
We write $\Fun_{\Mod}(\ccat_\Rel,\dcat^\op)$ for the image of the functor $F_\dcat$: its objects consist of functors $M:\ccat_\Rel\to\dcat^\op$ (\ie interpretations of the signature by \cref{Tmod-data}) satisfying conditions 1, 2 and 3 of \cref{Tmod}.

\begin{lemma}
  \label{lem:filtered}
  The functor $F$ preserves filtered colimits.
\end{lemma}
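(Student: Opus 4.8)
The plan is to recognize $F$ as restriction along the inclusion $I\colon\ccat_\Rel\hookrightarrow\tilde\ccat$, with target $\Psh(\ccat_\Rel^\op)=\Fun(\ccat_\Rel,\Set)$, and to factor it as the composite $\Lex(\tilde\ccat,\Set)\hookrightarrow\Fun(\tilde\ccat,\Set)\to\Fun(\ccat_\Rel,\Set)$ of the full inclusion followed by precomposition with $I$. It then suffices to check that each of these two functors preserves filtered colimits.

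For the second functor, precomposition with $I$, I would simply observe that colimits in both $\Fun(\tilde\ccat,\Set)$ and $\Fun(\ccat_\Rel,\Set)$ are computed pointwise in $\Set$, and that restriction along $I$ leaves the values unchanged (it only reindexes them); hence it preserves all colimits, in particular filtered ones. Equivalently, restriction along any functor is a left adjoint, namely to right Kan extension, so it preserves colimits. This step is purely formal.

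The only real content is the first functor, for which I would show that $\Lex(\tilde\ccat,\Set)$ is closed under filtered colimits in $\Fun(\tilde\ccat,\Set)$, so that such filtered colimits are again the pointwise ones. This follows from the classical fact that finite limits commute with filtered colimits in $\Set$: since $\tilde\ccat$ has all finite limits by \cref{prop:syntactic}, the pointwise filtered colimit of a diagram of finite-limit-preserving functors $\tilde\ccat\to\Set$ still preserves finite limits, hence lies in $\Lex(\tilde\ccat,\Set)$, where it is therefore also the colimit. Combining the two steps yields the claim. I do not expect any serious obstacle; the one point deserving care is precisely this last one — that filtered colimits in $\Lex(\tilde\ccat,\Set)$, which a priori are only known abstractly to exist (by \cref{relpsh-lfp}), coincide with the pointwise colimits taken in the ambient functor category.
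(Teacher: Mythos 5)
Your proof is correct and follows essentially the same route as the paper: the paper also factors $F$ as the inclusion $\Ind(\tilde\ccat^\op)=\Lex(\tilde\ccat,\Set)\hookrightarrow\Psh(\tilde\ccat^\op)=\Fun(\tilde\ccat,\Set)$ followed by precomposition along $I$, handling the second step exactly as you do. The only difference is that for the first step the paper cites a general theorem that the Ind-completion is closed under filtered colimits in presheaves, whereas you supply the underlying argument directly via the commutation of finite limits with filtered colimits in $\Set$ — which is precisely the proof of that cited fact.
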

\begin{proof}
  The functor $F$ is the composite
  $\Ind(\tilde{\ccat}^\op)\to \Psh(\tilde{\ccat}^\op)\to \Psh(\ccat_\Rel^\op)$
  where the first functor is the inclusion (see the proof of \cref{real-model}) and preserves filtered colimits by \cite[Theorem 6.1.8]{kashiwaracategories},
  and the second is precomposition and preserves small colimits since it admits a right adjoint~\cite[\S 5]{grothendieck2006sga}.
  \qed
\end{proof}

We can now give an explicit way of going from $\Lex(\tilde{\ccat},\Set)$ to the category of realizations $\coCont(\RelPsh(\ccat),\dcat)$.
In fact, we can show that this realization is essentially a left Kan extension, in the following sense. 
% Set \[F:=F_\Set:\Lex(\tilde{\ccat},\Set)\to\Psh(\ccat_\Rel^\op)\] 
Consider a model $M\in\Lex(\tilde{\ccat},\dcat^\op)$.
As we have seen in \cref{real-model}, $M$ corresponds to a realization $\hat{M}:\Lex(\tilde{\ccat},\Set)\to\dcat$.
On the other hand, by left Kan extension of~$F_\dcat(M)$ along the Yoneda embedding $\yo:\ccat_\Rel^\op\to\Psh(\ccat_\Rel^\op)$, we obtain a functor $F_\dcat(M)_!:\Psh(\ccat_\Rel^\op)\to\dcat$ which is cocontinuous.
Now $\hat{M}$ can be compared to $F_\dcat(M)_!\circ F$: we show that in fact both coincide.
\[
  \begin{tikzcd}[column sep=large]
    \Lex(\tilde\ccat,\Set)\ar[rr,bend left=15,"\hat M"]\ar[r,"F"']&\Psh(\ccat_\Rel^\op)\ar[r,dashed,"F_\dcat(M)_!"',pos=.3]&\dcat\\
    &\ccat_\Rel^\op\ar[u,"\yo"]\ar[hook,r]\ar[ur,"F_\dcat(M)"']\ar[ur,bend left=20,phantom,"\Uparrow",pos=.3]&\tilde\ccat^\op\ar[u,"M"']
  \end{tikzcd}
\]

\begin{proposition}\label{prop:kan}
  For every $M\in\Lex(\tilde{\ccat},\dcat^\op)$, we have $F_\dcat(M)_!\circ F=\hat{M}$, \ie
  \[\hat{M}(P)=\colim_{(x,a)\in \int \hspace{-0.5ex}F(P)}M(x)\]
\end{proposition}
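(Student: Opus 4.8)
The plan is to check that the two functors $\hat M$ and $F_\dcat(M)_!\circ F$ from $\Lex(\tilde\ccat,\Set)$ to $\dcat$ both preserve filtered colimits and restrict to the same functor along the embedding $\yo_{\tilde\ccat^\op}\colon\tilde\ccat^\op\hookrightarrow\Lex(\tilde\ccat,\Set)$ of representables. Since $\Lex(\tilde\ccat,\Set)\simeq\Ind(\tilde\ccat^\op)$ is the free cocompletion of $\tilde\ccat^\op$ under filtered colimits (proof of \cref{real-model}, \cite{kashiwaracategories}), any two filtered-colimit-preserving functors out of it that agree on $\tilde\ccat^\op$ are naturally isomorphic; and once $F_\dcat(M)_!\circ F=\hat M$ is established, the displayed colimit formula is just the defining formula of the left Kan extension $F_\dcat(M)_!$ evaluated at $F(P)$.

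For preservation of filtered colimits: $\hat M$ is a realization, hence cocontinuous, by \cref{real-model}; and $F_\dcat(M)_!\circ F$ is the composite of $F$, which preserves filtered colimits by \cref{lem:filtered}, with the left Kan extension $F_\dcat(M)_!$ along the Yoneda embedding into the cocomplete category $\dcat$, which is cocontinuous. For the restriction to $\tilde\ccat^\op$, the equivalence $\coCont(\Ind(\tilde\ccat^\op),\dcat)\simeq\Rex(\tilde\ccat^\op,\dcat)$ used in the proof of \cref{real-model} (via \cite[Proposition 1.45 (ii)]{adamek1994locally}) is precisely restriction along $\yo_{\tilde\ccat^\op}$, so $\hat M\circ\yo_{\tilde\ccat^\op}\cong M$ by the very construction of $\hat M$, viewing the object $M$ of $\Lex(\tilde\ccat,\dcat^\op)$ as a functor $\tilde\ccat^\op\to\dcat$.

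The real work is to show $(F_\dcat(M)_!\circ F)\circ\yo_{\tilde\ccat^\op}\cong M$ as well. Unwinding definitions, $F(\yo_{\tilde\ccat^\op}(x))=\yo_{\tilde\ccat^\op}(x)\circ I=\tilde\ccat(x,I-)$, an object of $\Psh(\ccat^\op_\Rel)$ whose category of elements is the comma category $x\downarrow I$ (objects $(y,a\colon x\to Iy)$ with $y\in\ccat_\Rel$), so
\[(F_\dcat(M)_!\circ F)(\yo_{\tilde\ccat^\op}(x))=\colim_{(y,a)\in x\downarrow I}M(Iy).\]
I claim this is $M(x)$, naturally in $x$, and two facts are needed. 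First, $x\downarrow I$ is a \emph{finite} category: reading off the syntactic description of $\tilde\ccat$, a morphism $x=\{\mathbf z.\phi\}\to\{(w:c).\top\}$ is (provably equivalent to) projection onto a variable of $\mathbf z$ of sort $c$, and a morphism $x\to\{(u:c,v:d).R_g(u,v)\}$ corresponds to a pair of variables of $\mathbf z$ for which $\phi$ provably forces $R_g$; since $\mathbf z$ is finite and every such derivation factors through a composite along one of the finitely many paths occurring in $\phi$, the objects of $x\downarrow I$ (and its morphisms, which are the span legs) are finite in number. Second, the canonical cone exhibits $x$ as the limit $\lim_{(y,a)\in x\downarrow I}Iy$ in $\tilde\ccat$, i.e.\ $\ccat_\Rel$ is codense in $\tilde\ccat$: a cone with vertex $W$ is exactly a tuple of morphisms $W\to\{(w:c_i).\top\}$, one per variable $z_i$ of $\mathbf z$, respecting the equalities in $\phi$ and such that each relational atom of $\phi$ is satisfied — which is the data of a morphism $W\to\{\mathbf z.\phi\}=x$. (Both facts can instead be read off \cref{cor:completion}, which presents $\tilde\ccat$ as a completion of $\ccat_\Rel$ under finite limits, in which the generators are automatically codense.) Since $M\colon\tilde\ccat\to\dcat^\op$ is left exact it preserves this finite limit, whence $\colim_{x\downarrow I}M(Iy)=M(\lim_{x\downarrow I}Iy)=M(x)$ computed in $\dcat^\op$; naturality in $x$ is routine.

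With both restrictions equal to $M$, the first paragraph yields $F_\dcat(M)_!\circ F=\hat M$, and evaluating at $P$ gives $\hat M(P)=F_\dcat(M)_!(F(P))=\colim_{(x,a)\in\int F(P)}M(x)$. The main obstacle is the penultimate paragraph — identifying the comma-category colimit with $M(x)$ — which rests on the codensity of $\ccat_\Rel$ in $\tilde\ccat$ together with the finiteness of the comma categories $x\downarrow I$, so that left-exactness of $M$ (rather than full cocontinuity, which is not available) is enough.
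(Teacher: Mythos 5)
Your overall strategy is the same as the paper's: reduce to agreement on $\tilde\ccat^\op$ using \cref{lem:filtered} and the universal property of $\Ind(\tilde\ccat^\op)$, then identify $F_\dcat(M)_!(F(\yo_{\tilde\ccat^\op}(x)))$, a colimit over the comma category $x\downarrow I$ (which is indeed the category of elements of $F(\yo_{\tilde\ccat^\op}(x))$), with $M(x)$. Your codensity claim --- that $x$ is the limit of $Iy$ over $x\downarrow I$ --- is correct and is exactly the paper's statement that $\{\mathbf x.\phi\}$ is the wide pullback of all the $\{\mathbf x.R_f(x_i,x_j)\}$ with $\phi\vdash_{\mathbf x}R_f(x_i,x_j)$ over the product of the $\{x_i.\top\}$. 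Where you diverge is in how this (co)limit is shown to be preserved: the paper invokes cocontinuity of $\hat M$ on the corresponding colimit of finitely presented models, whereas you invoke left-exactness of $M$, which forces you to claim that $x\downarrow I$ is \emph{finite}.

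That finiteness claim is false for general $\ccat$, and this is a genuine gap. Your justification (``every such derivation factors through a composite along one of the finitely many paths occurring in $\phi$'') ignores paths that traverse cycles. Concretely, let $\ccat$ be the free monoid on one endomorphism $f:c\to c$ viewed as a one-object category, and let $x=\{(z_1:c,z_2:c).R_f(z_1,z_2)\land R_f(z_2,z_1)\}$. The composition axioms of $\Tcrel$ derive $R_{f^n}(z_1,z_2)$ for every odd $n$ (and $R_{f^n}(z_i,z_i)$ for every even $n$), so $x\downarrow I$ has infinitely many objects, one for each such derivable atom, and mere left-exactness of $M$ does not let you preserve the limit over it. The gap is repairable without changing your architecture: the limit over all of $x\downarrow I$ reduces to the finite limit over the subdiagram $J_0$ consisting of the variables and the actual conjuncts of $\phi$ (which $M$ does preserve), because the remaining cone/cocone components are forced --- uniqueness because each $(\iota_h^0,\iota_h^1):M(c)\sqcup M(d)\to M(R_h)$ is an epimorphism (condition~\ref{Tmod-mod} of \cref{Tmod}), and existence by soundness of the derivation $\phi\vdash_{\mathbf x}R_h(z_i,z_j)$ in the model $M$. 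Note that $J_0$ is \emph{not} initial in $x\downarrow I$ (there are no morphisms between distinct relation-objects in $\ccat_\Rel$), so this forcing argument cannot be replaced by a cofinality argument; it, or the paper's appeal to cocontinuity of $\hat M$ as in \cref{real-model}, must be made explicitly. (For the categories actually used later, such as $\square$ and $\gcat$, there are no cycles and your finiteness claim happens to hold, but the proposition is stated for arbitrary small $\ccat$.)
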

\begin{proof}
	By~\cite[Lemma D1.4.4(ii)]{johnstone2002sketches}, any object of $\tilde{\ccat}$ is isomorphic to a formula in context which is a conjunction of atomic formulas.
	We first see that in fact any object of $\tilde{\ccat}$ is isomorphic to a conjunction of formulas of the form $R_f$, and no equalities.
  Indeed, we have
  \[\{(x_1:c,x_2:c,\mathbf{x}).(x_1=x_2)\land\phi\}\cong\{(x:c,\mathbf{x}).\phi[x_1\mapsto x,x_2 \mapsto x]\}\]
  (where $\phi[x_1=x,x_2=x]$ is the formula $\phi$ where we have substituted the variables $x_1$ and $x_2$ by the variable $x$),
  because the ``function'' from left to right sending $x_1$ and $x_2$ to $x$ is in fact functional in both directions (see \cite[Lemma D1.4.4(i)]{johnstone2002sketches}).
  Repeating this procedure a finite number of times, we get a formula with no equalities:
  we say that such a formula in context is \emph{normalized}.
	Since~$\tilde{\ccat}^\op$ is the category of finitely presented models of $\Tcrel$, 
	the morphisms between two such formulas are just maps of contexts which preserve the relations.
	First note that $F$ sends objects of $\ccat_\Rel$ to representable presheaves, since $\ccat_\Rel$ is a full subcategory of $\tilde{\ccat}$.
	Now we show that the functors coincide on $\tilde{\ccat}$, by explicit comparison. 
	This is enough to conclude, by \cref{lem:filtered} and the fact that $\Ind(\tilde{\ccat}^\op)$ is the free completion of $\tilde{\ccat}^\op$ by filtered colimits.
	Let $\{\mathbf{x}.\phi\}$ be an normalized object of $\tilde{\ccat}$. 
	Then $\{\mathbf{x}.\phi\}$ is a pullback over $\{\mathbf{x}.\top\}$ of all the $\{\mathbf{x}.R_f(x_i,x_j)\}$ such that $\phi\vdash_\mathbf{x} R_f(x_i,x_j)$ is provable in $\Tcrel$, by definition of the pullback in syntactic categories.
	Now, $\{\mathbf{x}.\top\}$ is the product of the $\{x_i.\top\}$, so we have a colimit in~$(\tilde{C})^\op$:
  \[\{\mathbf{x}.\phi\}=\colim_{(x,a)\in \int\hspace{-0.5ex}F(\{\mathbf{x}.\phi\})}x\]
  Indeed, let $D:\int F(\{\mathbf{x}.\phi\})\to(\tilde{C})^\op$ be the corresponding diagram.
  For another finitely presented model $N$, a morphism $\{\mathbf{x}.\phi\}\to N$ is equivalent 
  to a tuple $\mathbf{y}$ such that $N\models\phi(\mathbf{y})$, 
  equivalently a family of elements $(y_i)_{1\leq i\leq n}$ with $n$ the length of $\mathbf{x}$,
  such that $\phi\vdash_\mathbf{x} R_f(x_i,x_j)$ implies $N\models R_f(y_i,y_j)$,
  which in turn is exactly a morphism of diagrams from $D$ to the constant diagram at $N$.
  Now this colimit is sent by $\hat{M}$ to a colimit, so we exactly get $\hat{M}(\{\mathbf{x}.\phi\})=F_\dcat(M)_!(F(\{\mathbf{x}.\phi\}))$. 
	The proof extends to morphisms of $\tilde{\ccat}$, so we are done.\qed
\end{proof}
This is the formalization of the intuition we started with:
we realize a relational presheaf by gluing the building blocks chosen for objects and for pairs of objects related by a morphism. We can sum up the previous propositions as:

\begin{corollary}
  \label{cor:completion}
  \label{mod-to-real}
  A functor $M:\ccat_\Rel\to \dcat^\op$ with $\dcat$ cocomplete induces a cocontinuous functor $\RelPsh(\ccat)\to \dcat$ if and only if it satisfies conditions \ref{Tmod-mod}, \ref{Tmod-refl} and \ref{Tmod-trans} of \cref{Tmod}.
  % We have
  % $\Fun_{\Mod}(\ccat_\Rel,\dcat^\op)\isoto\Lex(\tilde{\ccat},\dcat^\op)$.
\end{corollary}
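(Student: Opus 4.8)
The plan is to assemble \cref{cor:completion} directly from the results already proved, treating it as essentially a bookkeeping corollary rather than a new argument. The statement to establish is that a functor $M:\ccat_\Rel\to\dcat^\op$ with $\dcat$ cocomplete induces a cocontinuous functor $\RelPsh(\ccat)\to\dcat$ if and only if $M$ satisfies conditions \ref{Tmod-mod}, \ref{Tmod-refl} and \ref{Tmod-trans} of \cref{Tmod}. First I would recall the chain of identifications: by \cref{real-model} the category of realizations of $\RelPsh(\ccat)$ in $\dcat$ is $\Lex(\tilde\ccat,\dcat^\op)=\Mod[\Tcrel](\dcat^\op)$, and by \cref{Tmod} together with \cref{Tmod-data} such a model is the same thing as a functor $M:\ccat_\Rel\to\dcat^\op$ satisfying the three numbered conditions — this is precisely the subcategory $\Fun_{\Mod}(\ccat_\Rel,\dcat^\op)$ introduced just before \cref{lem:filtered}.

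So the forward direction is immediate: if $M$ satisfies conditions \ref{Tmod-mod}, \ref{Tmod-refl}, \ref{Tmod-trans}, then by \cref{Tmod} and \cref{Tmod-data} it extends (uniquely) to a model $\bar M\in\Lex(\tilde\ccat,\dcat^\op)$, and by \cref{real-model} this model corresponds to a realization $\hat{\bar M}:\RelPsh(\ccat)\to\dcat$, which is cocontinuous by definition of a realization. Moreover \cref{prop:kan} tells us that this cocontinuous functor is concretely $F_\dcat(\bar M)_!\circ F$, i.e.\ the left Kan extension of $M$ along $\yo$ restricted along $F$, which makes the induced functor genuinely computable from $M$. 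For the converse: if $M:\ccat_\Rel\to\dcat^\op$ is merely an arbitrary functor and one nonetheless wants it to induce a cocontinuous realization extending the same building-block data, then the induced functor must in particular be a realization, hence (by \cref{real-model}) correspond to a model of $\Tcrel$ in $\dcat^\op$; restricting that model along $I:\ccat_\Rel\hookrightarrow\tilde\ccat$ recovers $M$ (the building blocks are determined by the sorts and relation symbols), and any model must satisfy the axioms, which by the analysis in the proof of \cref{Tmod} are exactly conditions \ref{Tmod-mod}, \ref{Tmod-refl}, \ref{Tmod-trans}. Hence $M$ satisfies them.

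The one point that needs a little care — and I expect it to be the main (mild) obstacle — is making the converse direction precise: the phrase ``induces a cocontinuous functor'' must be interpreted as ``the building-block data $M$ is the restriction along $I$ of some realization'', since an arbitrary functor $\ccat_\Rel\to\dcat^\op$ not satisfying the conditions still has a left Kan extension $F_\dcat(M)_!$ along $\yo$, but that left Kan extension will not descend along $F$ to a well-defined functor on $\RelPsh(\ccat)=\Lex(\tilde\ccat,\Set)$ compatibly with the identifications of \cref{real-model}. Concretely, one argues that for $F_\dcat(M)_!\circ F$ to even be defined as a cocontinuous functor on $\RelPsh(\ccat)$ one needs $F_\dcat(M)$ to send the colimit cocones witnessing the axioms (the cocones appearing in the proof of \cref{prop:kan}, where $\{\mathbf x.\phi\}$ is exhibited as a colimit of objects of $\ccat_\Rel^\op$) to colimit cocones, and unwinding what this says for the specific presheaves encoding reflexivity of $R_{\id_c}$ and transitivity of composition yields exactly conditions \ref{Tmod-refl} and \ref{Tmod-trans}, while condition \ref{Tmod-mod} is the requirement that $M$ be a $\Sigma_\ccat^\Rel$-structure in $\dcat^\op$ at all. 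Everything else is a direct quotation of \cref{real-model}, \cref{Tmod}, \cref{Tmod-data} and \cref{prop:kan}, so no new calculation is required.
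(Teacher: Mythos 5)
Your proposal is correct and matches the paper's treatment: the corollary is stated there as a direct summary of \cref{real-model}, \cref{Tmod}, \cref{Tmod-data} and \cref{prop:kan}, with no further argument, and your assembly of the two directions (conditions $\Rightarrow$ unique extension to a model $\Rightarrow$ realization; realization $\Rightarrow$ model $\Rightarrow$ restriction along $I$ satisfies the conditions) is exactly the intended reading. Your extra remark on how to interpret ``induces'' in the converse direction is a reasonable clarification rather than a deviation.
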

% In other words, $\tilde{\ccat}$ is a ``completion'' of $\ccat_\Rel$ with finite limits, in some weak sense.
Now it is easy to see that the geometric realization of relational graphs suggested in the introduction is indeed a realization.
We will give a more general and detailed construction, for precubical sets, in section~\ref{seq-real} below.
%\SM{and detailed in section... below} 

\begin{remark}
  One could also consider, instead of relational presheaves, the category $\mathrm{Pseudo}(\ccat^\op, \Span(\Set))$ of pseudofunctors from $\ccat^\op$ to the category whose objects are sets and whose morphisms are spans of functions: namely spans of sets can be seen as a ``quantitative'' variant of relations, where two elements have a set of relations between them (instead of simply being in relation or not).
  It is proven in~\cite{johnstone1999note,niefield2010lax} that, under some condition (called CFI in \cite{johnstone1999note} and IG in \cite{niefield2010lax}), 
  this is a Grothendieck topos, whose site can be obtained by putting a (non-trivial) Grothendieck topology on the twisted arrows category $\ccat_{tw}$ of $\ccat$.
  The idea is similar to what we just presented: going from $\ccat$ to $\ccat_{tw}$ essentially replaces every morphism of $\ccat$ by a ``formal'' span.
  But $\ccat_{tw}$ additionaly contains coherence data for composition of morphisms in $\ccat$.  
  The twisted arrow category approach can also be used to realize relational (more precisely, span-valued) presheaves by Kan extension.
  However, in the cases that interest us, mainly those related to precubical sets,
  the condition IG is not satisfied, 
  and in fact taking pseudofunctors instead of lax functors would significantly reduce the situations which we would be able to capture,
  as argued in the introduction.
	%\begin{figure}[h!]
	%\begin{center}
		%\begin{tikzpicture}
			%\fill[mygray] (-1,-1) rectangle (1,1) ;
			%\fill (1,1) circle (0.05);
			%\draw (0,-1.5) node{(a)};

			%\begin{scope}[xshift=40mm]
			%\fill[mygray] (-1,-1) rectangle (1,1) ;
			%\draw[thick] (-1,1) -- (1,1) ;
			%\fill (1,1) circle (0.05);
			%\draw (0,-1.5) node{(b)};
			%\end{scope}

			%\begin{scope}[xshift=80mm]
			%\fill[mygray] (-1,-1) rectangle (1,1) ;
			%\draw[thick] (-1,1) -- (1,1) ;
			%\draw[thick] (1,-1) -- (1,1);
			%\fill (1,1) circle (0.05);
			%\draw (0,-1.5) node{(c)};
			%\end{scope}
		%\end{tikzpicture}
	%\end{center}	
	%\caption{Comparing pseudo-fonctors and lax functors.}		
%\end{figure}
	%\noindent For example, the relational precubical set (a)
	%cannot be represented by a pseudofunctor, since this requires that relations compose strictly, 
	%so the relation `a vertex is at the top right corner of a square' sould be the composite of `a vertex is the source of an edge' and 'an edge is the top edge of a square', 
	%i.e.~we would need something like (b).
	%But still, this relational presheaf does not work, 
	%because by the cubical relations, 
	%the same relation should also be the composite of `a vertex is the target of an edge' and 'an edge is the right edge of a square',
	%so in fact we need (c) to be able to add a vertex to an open square, 
	%which we clearly do not want.
	
\end{remark}

We end this section by explaining the relationship between presheaves and relational ones on a fixed category~$\ccat$, and the relationship between their realizations.
There is a comparison functor $U:\Psh(\ccat)\to \RelPsh(\ccat)$, induced by post-composition with the canonical functor $\Set\to\Rel$ sending a function to the corresponding functional relation. Since both categories are locally presentable (\cref{relpsh-lfp}), we can use the special adjoint functor theorem to construct adjoints, see \cref{sec:presheaves} for a full proof.
\begin{theorem}\label{th:U}
  The comparison functor $U:\Psh(\ccat)\to \RelPsh(\ccat)$ is full and faithful and admits right and left adjoint.
\end{theorem}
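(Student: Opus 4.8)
The plan is to verify the three claims — full faithfulness, existence of a right adjoint, existence of a left adjoint — separately, exploiting the fact that both categories are locally finitely presentable (by \cref{relpsh-lfp} and the general theory, since $\Psh(\ccat)$ is a presheaf category and hence certainly lfp).

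\textbf{Full faithfulness.} First I would unfold the definition of $U$. A morphism of presheaves $\alpha : P \To Q$ is a family of functions $\alpha_c : P(c)\to Q(c)$ commuting strictly with the face maps, while a morphism $U P\To U Q$ of relational presheaves is a family of functions $\alpha_c$ such that $a \to_f b$ implies $\alpha_c(a)\to_f \alpha_d(b)$. Since the relation $U P(f)$ is the graph of the function $P(f)$, the condition ``$a\to_f b$ implies $\alpha_c(a)\to_f\alpha_d(b)$'' says exactly that $\alpha_d\circ P(f) = Q(f)\circ\alpha_c$ on the nose (using that $P(f)$ and $Q(f)$ are total functions, so both sides are functions and the inclusion of graphs forces equality). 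Hence a morphism $UP\To UQ$ is precisely a natural transformation $P\To Q$, and $U$ is full and faithful on morphisms; it is clearly injective on objects up to iso as well. This step is routine.

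\textbf{The two adjoints.} Here I would invoke the special adjoint functor theorem (or, more cleanly, the adjoint functor theorem for locally presentable categories: a functor between locally presentable categories has a right adjoint iff it is cocontinuous, and a left adjoint iff it is continuous and accessible). So it suffices to show $U$ preserves all small limits and all small colimits. For colimits: the forgetful functors $\Psh(\ccat)\to \Set^{\Ob\ccat}$ and $\RelPsh(\ccat)\to\Set^{\Ob\ccat}$ both create (and in particular preserve and reflect) filtered colimits and all colimits that exist — more carefully, colimits in $\RelPsh(\ccat)$ are computed as in presheaves on underlying sets, with the relation on a colimit being generated by the images of the component relations — and $U$ is the identity on underlying $\Ob\ccat$-indexed families of sets, so it commutes with the two forgetful functors; chasing this diagram shows $U$ preserves colimits. (Alternatively one checks directly that the colimit of a diagram of functional relations, computed in $\RelPsh(\ccat)$, is again functional, so it agrees with the presheaf colimit.) For limits: products and equalizers in $\RelPsh(\ccat)$ are again computed on underlying sets, the relation being the ``pointwise'' one, and a pointwise product/equalizer of functional relations is functional; so $U$ preserves limits as well. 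With $U$ continuous, cocontinuous, and accessible (it preserves filtered colimits, being essentially a pointwise construction), both adjoints follow from the adjoint functor theorem for locally presentable categories, as recorded in \cref{relpsh-lfp} and \cref{th:U}'s hypotheses. A cleaner route, matching the paper's phrasing, is: $U$ is cocontinuous between lfp categories, hence has a right adjoint; $U$ is continuous and accessible between lfp categories, hence has a left adjoint (special adjoint functor theorem, using local presentability to get the solution-set/well-poweredness conditions).

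\textbf{Main obstacle.} The delicate point is the verification that $U$ preserves colimits, i.e.\ that a colimit of functional relational presheaves, formed in $\RelPsh(\ccat)$, is still functional (equivalently lies in the essential image of $U$). This is where one must actually understand how colimits in $\RelPsh(\ccat) \simeq \Tcrel\text{-}\Mod(\Set)$ are computed — as a free model on the colimit of underlying sets, quotiented by the lax-functoriality axioms — and check that when all the inputs have $P(f)$ a total single-valued relation, so does the output. For totality this is straightforward (the relevant axioms only force things to be ``at least'' the composite), but single-valuedness requires noting that the colimiting cocone maps, being morphisms of relational presheaves, cannot merge the "extra" related pairs unless they were already present, which needs a small argument about the explicit construction of colimits of models. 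I would present this either by the direct computation or, more slickly, by exhibiting the left adjoint to $U$ explicitly (which "functionalizes" a relational presheaf) and checking the triangle identities, since producing the left adjoint by hand sidesteps the need to analyze arbitrary colimits — though the paper defers the full argument to \cref{sec:presheaves}, so a proof sketch at this level is appropriate here.
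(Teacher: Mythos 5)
Your proposal is correct and follows essentially the same route as the paper: full faithfulness by direct unfolding (an inclusion of graphs of total functions forces equality), the right adjoint via the special adjoint functor theorem after checking that colimits of relational presheaves computed on underlying sets stay functional, and the left adjoint via accessibility plus preservation of pointwise limits. The paper's appendix likewise carries out the colimit check explicitly on coproducts and coequalizers (noting that no closure under composition is needed for diagrams of genuine presheaves), and records the explicit "functionalization" left adjoint only as a supplementary remark, just as you suggest.
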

We deduce that $\Psh(\ccat)$ is a full subcategory of $\RelPsh(\ccat)$ which is reflective and coreflective, hence closed under limits and colimits. In particular:
\begin{theorem}
  Every realization $\RelPsh(\ccat)\to \dcat$ induces, by precomposition with~$U$, a realization $\Psh(\ccat)\to\dcat$.
\end{theorem}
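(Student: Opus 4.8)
The plan is to reduce the statement directly to \cref{th:U}. By definition, a realization of $\RelPsh(\ccat)$ in $\dcat$ is a cocontinuous functor $R\colon\RelPsh(\ccat)\to\dcat$, and since $\Psh(\ccat)$ is cocomplete (being a presheaf category) as is $\RelPsh(\ccat)$ (\cref{relpsh-lfp}), what needs to be checked is precisely that the composite $R\circ U\colon\Psh(\ccat)\to\dcat$ preserves all small colimits, i.e.\ is itself a realization.

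The first step is to invoke \cref{th:U}, which asserts in particular that $U$ admits a right adjoint; hence $U$ is a left adjoint and therefore preserves all small colimits. The second step is then purely formal: a composite of two cocontinuous functors is cocontinuous, so $R\circ U$ is cocontinuous, i.e.\ a realization of $\Psh(\ccat)$ in $\dcat$. This already completes the argument.

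Since almost everything is formal, there is no genuine obstacle remaining at this point: the only ingredient that is not immediate is that $U$ preserves colimits, and this is exactly (half of) \cref{th:U}, whose own proof — deferred to \cref{sec:presheaves} — is where the actual work lies, via the special adjoint functor theorem applied to the locally presentable categories $\Psh(\ccat)$ and $\RelPsh(\ccat)$. As a sanity check on the direction of the adjunction, this also matches the concrete picture: $U$ is postcomposition with the canonical functor $\Set\to\Rel$, which has the powerset functor as a right adjoint, so one does expect $U$ to be cocontinuous.
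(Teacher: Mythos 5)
Your argument is correct and is essentially the paper's own: the paper deduces the theorem from \cref{th:U} by noting that $U$, having a right adjoint, preserves colimits, so that $R\circ U$ is cocontinuous. The only difference is cosmetic — the paper phrases this via $\Psh(\ccat)$ being a (co)reflective subcategory closed under colimits, while you invoke cocontinuity of $U$ directly — and your closing remark about $\Set\to\Rel$ is correctly flagged as a heuristic rather than part of the proof.
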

For instance, it is not difficult to see that the geometric realization of relational graphs induces the usual geometric realization of graphs.
Again, the more general case of precubical sets is treated below. 

\section{Realizations of relational precubical sets}
\label{sec:pcset}
In this section we extend some standard operations on precubical sets to relational precubical sets, and in particular define a first notion of realization.
\subsection{Barycentric subdivision}
\begin{wrapfigure}{r}{0.25\textwidth}
  \vspace{-1.1cm}
  \begin{center}
  \begin{tikzpicture}
      \fill[mygray] (0,0) rectangle (2,2);
      \filldraw (0,0) circle (0.05);
      \filldraw (2,0) circle (0.05);
      \filldraw (0,2) circle (0.05);
      \filldraw (2,2) circle (0.05);
      \draw
      (0,0) edge[below,thick,"$e$"'] (2,0)
      (2,0) edge[below,thick] (2,2)
      (0,0) edge[below,thick] (0,2)
      (0,2) edge[below,thick] (2,2)
      ;
      \draw (1,1) node{$\alpha$};
  \end{tikzpicture}
\end{center}
  \vspace{-1.5cm}
\end{wrapfigure}
As a first application of previous work, let us define the barycentric subdivision of relational precubical sets.
Let $I$ be the graph $\cdot{\to}\cdot$ with one edge, and $J$ be~$\cdot{\to}\cdot{\to}\cdot$, its barycentric subdivision (the graph with two consecutive edges).
Recall that the category of precubical sets can be equipped with a tensor product~\cite[\S 3.4.1]{fajstrup2016directed}, denoted $\otimes$.
The classical barycentric subdivision functor $\frac{1}{2}$ can be defined

\begin{wrapfigure}{r}{0.25\textwidth}
  \vspace{-0.3cm}
  \begin{center}
  \begin{tikzpicture}
        \fill[mygray] (0,0) rectangle (2,2);
        \filldraw (0,0) circle (0.05);
        \filldraw (2,0) circle (0.05);
        \filldraw (0,2) circle (0.05);
        \filldraw (2,2) circle (0.05);
        \filldraw (1,0) circle (0.05);
        \filldraw (0,1) circle (0.05);
        \filldraw (2,1) circle (0.05);
        \filldraw (1,2) circle (0.05);
        \filldraw (1,1) circle (0.05);
        \draw
        (0,0) edge[below,thick,"\tiny ${(e,1/4)}$"'] (1,0)
        (1,0) edge[below,thick,"\tiny ${(e,3/4)}$"'] (2,0)
        (2,0) edge[below,thick] (2,2)
        (0,0) edge[below,thick] (0,2)
        (0,2) edge[below,thick] (2,2)

        (1,0) edge[below,thick] (1,1)
        (1,1) edge[below,thick] (1,2)
        (0,1) edge[below,thick] (1,1)
        (1,1) edge[below,thick] (2,1)
        ;
        \draw (1,1) node[above right]{\tiny ${(\alpha,(1/2,1,2))}$};
        \draw (0.5,0.5) node{\tiny ${(\alpha,(1/4,1,4))}$};
  \end{tikzpicture}
\end{center}
  \vspace{-1cm}
\end{wrapfigure}
\noindent as a realization functor on $\Psh(\square)$, by $n\mapsto J^{\otimes n}$.
In particular, $J^{\otimes n}$ is the barycentric subdivision of $I^{\otimes n}$.
For a precubical set $P$, we can see the elements of $\frac{1}{2}P$ as pairs $(x,(t_1,\dots,t_m))$ with $x\in P(m)$ and $t_j\in\{1/4,1/2,3/4\}$.
The dimension of $(x,(t_1,\dots,t_m))$ is given by the number of indices $j$ such that $t_j\ne1/2$.
See \cite[\S 5.1]{chamoun2025non} for details.
We say that $(x,(t_1,\dots,t_m))$ \emph{belongs to} $x$.

% As an application of the tools we developped, let us define a subdivision realization functor for relational precubical sets.
By~\cref{mod-to-real}, defining a subdivision realization functor amounts to defining a model~$M$ of $\Tcrel[\square]$ in $\RelPsh(\square)$, which we now do.
To an object $n\in\square$, we associate the relational precubical subset of $J^{\otimes n}$ consisting of the cubes belonging to the unique $n$-cube of $I^{\otimes n}$,
so that this is the subdivision of the open $n$-cube $\yo_{\tilde{\ccat}^\op}(\{(x:n).\top\})$.
We also set $M(R_{\id_n}):=M(n)$.
Any other morphism $f:n\to n+k$ in~$\square$ can be seen as an inclusion of an $n$-cube into an $(n+k)$-cube, and we define $M(R_f)$ to be the subdivision of the open $(n+k)$-cube together with the corresponding subdivided open $n$-cube in its boundary.
More precisely, $\yo_{\tilde{\ccat}^\op}(\{(x:n+k,y:n).R_f(x,y)\})$ is canonically a subobject of the representable $(n+k)$-cube $I^{\otimes n}$ seen as a relational precubical set.
We then define $M(R_f)$ to be the relational precubical subset of $\frac{1}{2}I^{\otimes n}$ consisting of the cubes wich belong to $\yo_{\tilde{\ccat}^\op}(\{(x:c,y:d).R_f(x,y)\})$.
For example:
\begin{align*}
  M(2)&=
  \begin{tikzpicture}[baseline={([yshift=-.5ex]current bounding box.center)}]
    \fill[color=mygray] (0,0) rectangle (1,1);
    \filldraw (0.5,0.5) circle (0.05);
      \draw
      (0,0.5) edge[below,thick] (1,0.5)
      (0.5,0) edge[below,thick] (0.5,1)
      ;
  \end{tikzpicture}
  &
  M(R_{d_0^-})
  &=
  \begin{tikzpicture}[baseline={([yshift=-.5ex]current bounding box.center)}]
    \fill[color=mygray] (0,0) rectangle (1,1);
    \draw[thick] (0,0) -- (0,1);
    \filldraw (0.5,0.5) circle (0.05);
    \filldraw (0,0.5) circle (0.05);
      \draw
      (0,0.5) edge[below,thick] (1,0.5)
      (0.5,0) edge[below,thick] (0.5,1)
      ;
  \end{tikzpicture}
  &
  M(R_{d_1^+})
  &=
  \begin{tikzpicture}[baseline={([yshift=-.5ex]current bounding box.center)}]
    \fill[color=mygray] (0,0) rectangle (1,1);
    \draw[thick] (0,1) -- (1,1);
    \filldraw (0.5,0.5) circle (0.05);
    \filldraw (0.5,1) circle (0.05);
      \draw
      (0,0.5) edge[below,thick] (1,0.5)
      (0.5,0) edge[below,thick] (0.5,1)
      ;
  \end{tikzpicture}
  &
  M(R_{d_1^+d_0^-})
  &=
  \begin{tikzpicture}[baseline={([yshift=-.5ex]current bounding box.center)}]
    \fill[color=mygray] (0,0) rectangle (1,1);
    % \draw[thick] (0,0) -- (0,1);
    \filldraw (0,1) circle (.05);
    \filldraw (0.5,0.5) circle (0.05);
      \draw
      (0,0.5) edge[below,thick] (1,0.5)
      (0.5,0) edge[below,thick] (0.5,1)
      ;
  \end{tikzpicture}
\end{align*}
We can check without difficulty that this indeed defines a model, thus a realization.
In addition, the induced realization on $\Psh(\square)$ is exactly the usual barycentric subdivision.

\subsection{Sequential geometric realization}
\label{seq-real}
Now, let us define a geometric realization functor for relational precubical sets: we will see that the situation there is quite subtle.
Again, we do it by defining a model~$M$ of $\Tcrel[\square]$ in $\Top^\op$.
Following the idea of \cite{dubut2019trees} for realizing partial precubical sets, 
it is natural to start by trying the following construction.
To an object $n\in\square$, we associate the topological open $n$-cube $M(n):=\mathopen]0,1[^n$.
A morphism $f:n\to n+k$ in~$\square$ can be seen as an inclusion of an $n$-cube into an $(n+k)$-cube and $M(R_f)$ is defined as an $(n+k)$-cube together with the corresponding $n$-cube in its boundary.
More precisely, such a morphism decomposes uniquely as
\[
  f=d^{\epsilon_k}_{n+k-1,i_k}\circ\ldots \circ d^{\epsilon_2}_{n+1,i_2}\circ d^{\epsilon_1}_{n,i_1}
\]
with $0\leq i_1<i_2<\ldots<i_k<n+k$, see~\cite{grandis2003cubical}. We define
% $M(R_f)=\prod_{0\leq i<n+k}I_i$ where $I_i=[0,1[$ (\resp $I_i=]0,1]$) if $i=i_k$ for some~$k$ and $\epsilon_k=-$ (\resp $\epsilon_k=+$), and $I_i=[0,1]$ otherwise.
$M(R_f)$ to be the subspace of $[0,1]^n$ consisting of points $(x_1,\ldots,x_n)$ such that either all the $x_i$ belong to~$]0,1[$, or for every index $i$ we have $x_i=0$ (\resp $x_i=1$) if $i=i_j$ for some $j$ with $\epsilon_j=-$ (\resp $\epsilon_j=+$) and $x_i\in\mathopen]0,1[$ otherwise.
In particular, $M(R_{\id_n})=M(n)$.
For instance,
\begin{align*}
  M(2)&=
  \begin{tikzpicture}[baseline={([yshift=-.5ex]current bounding box.center)}]
    \fill[color=mygray] (0,0) rectangle (1,1);
  \end{tikzpicture}
  &
  M(R_{d_0^-})
  &=
  \begin{tikzpicture}[baseline={([yshift=-.5ex]current bounding box.center)}]
    \fill[color=mygray] (0,0) rectangle (1,1);
    \draw[thick] (0,0) -- (0,1);
  \end{tikzpicture}
  &
  M(R_{d_1^+})
  &=
  \begin{tikzpicture}[baseline={([yshift=-.5ex]current bounding box.center)}]
    \fill[color=mygray] (0,0) rectangle (1,1);
    \draw[thick] (0,1) -- (1,1);
  \end{tikzpicture}
  &
  M(R_{d_1^+d_0^-})
  &=
  \begin{tikzpicture}[baseline={([yshift=-.5ex]current bounding box.center)}]
    \fill[color=mygray] (0,0) rectangle (1,1);
    % \draw[thick] (0,0) -- (0,1);
    \filldraw (0,1) circle (.05);
  \end{tikzpicture}
\end{align*}
% \SM{ça ne marche pas !}
% We also set $R_{\id_n}=]0,1[^n$ for every $n$.
% Now a morphism $w:m\to n$ in $\square$ represents the face inclusion of an $m$-cube into an $n$-cube, 
% which can in turn be seen as an element of $\{-1,0,1\}^n$ with exactly $m$ non-zero components.
% Writing $I_{-1}:=[0,1[, \, I_0:=]0,1[, \, I_1:=]0,1]$, we define $M(R_w):=\Pi_{i\in w}I_i$, with the obvious inclusions.
%
However, this construction does not work, because $M$ is not a model of $\Tcrel[\square]$. For instance, the condition \ref{Tmod-trans} of \cref{Tmod} is not satisfied.
Indeed, consider the obvious inclusion $f:M(R_{d_1^-d_0^-})\hookrightarrow M(R_{d_0^-})\sqcup_{M(1)}M(R_{d_0^-})$:
\[
  f
  \quad:\quad
  \begin{tikzpicture}[baseline={([yshift=-.5ex]current bounding box.center)}]
    \fill[color=mygray] (0,0) rectangle (1,1);
    \filldraw (0,0) circle (.05);
    \draw[thick,red] (0,0) -- (.5,.5);
  \end{tikzpicture}
  \quad\into\quad
  \begin{tikzpicture}[baseline={([yshift=-.5ex]current bounding box.center)}]
    \fill[color=mygray] (0,0) rectangle (1,1);
    \draw[thick] (0,0) -- (0,1);
    \filldraw (0,0) circle (.05);
    % \draw (0,0) node[left]{$s$};
  \end{tikzpicture}
\]
This map is not continuous, since the space in the target has more open sets than the one in the source.
For instance, on the space in the source we have a diagonal path (drawn in red) starting from the lower-left corner and going inside the square, but its image in the space in the target is not continuous.
In fact, the image of~$f$ is topologically equivalent to the space $M(0)\sqcup M(1)$, see \cref{cor:ht}.
% For instance, there is no continuous path in the space on the right, whose starting point is~$s$ and 
% there are no continuous paths $\gamma:[0,1]\xrightarrow{}M(R_{d_0^-})\sqcup_{M(1)}M(R_{d_0^-})$ such that $\gamma(0)=s$ and $\gamma(]0,1])$ is in the interior of the square.
% In fact, the induced topology on the subset of $M(R_{d_0^-})\sqcup_{M(1)}M(R_{d_0^-})$ represented by $M(R_{d_1^-d_0^-})$ has the same homotopy type as the coproduct space $M(0)\sqcup M(1)$ (see Corollary \ref{cor:ht}).
% This violates condition \ref{Tmod-trans} of \cref{Tmod}.
There are essentially two solutions to this problem, we provide a first one here, and another one in next section.

\bigskip

% \paragraph{The sequential geometric realization.}
A first solution is to slightly modify the above definition in the following way.
We consider a model~$M$ of $\Tcrel[\square]$ in $\Top^\op$, which is defined as above on objects and on morphisms of the form $R_{\id_n}$ and $R_{d_{n,i}^\epsilon}$. However, for any other morphism $f:m\to n$, we define $M(R_f):=M(m)\sqcup M(n)$, as suggested by the above observation.
This indeed satisfies condition \ref{Tmod-trans} of \cref{Tmod}, because the topology on $M(R_{f\circ g})$ for $f,\,g\ne\id$ will now always be the coproduct topology,
\ie the topology with the most possible open sets, so that no continuity problems can occur.
This solution is interesting, because we get a space where continuous paths can only go from (the realization of) a cube to an adjacent cube of dimension $+1$ or $-1$.
This is coherent with the combinatorial definition of paths of partial HDAs in \cite{dubut2019trees,fahrenberg2015partial},
where one is only allowed to go from a cube to an adjacent cube of dimension $+1$ or $-1$,
which in turn is crucial for the interpretation of partial HDAs in formalizing priorities.
Indeed, going directly from a vertex to a square means starting two transitions at the same time,
so it is impossible to force one of them to start \emph{strictly} before the other if this is allowed.
Since in a path, the creation of multiple processes has do be done in a sequential way, we introduce the following terminology:

\begin{definition}
  The \emph{sequential geometric realization} is the functor induced by the above model.
   % $\real{{-}}^{\textnormal{seq}}_\Top:\preRelPsh(\square)\to\Top$
\end{definition}

\section{Variants of relational presheaves}
\label{sec:families}
Many variants of the notion of precubical relation can be thought of. For instance, one could take colax functors instead of lax ones, \ie we reverse the inclusions of \eqref{eq:relpsh-lax} in the definition. In this section, we will be mostly interested in another variant that we call \emph{relational families},
which is the variant of \cref{relpsh} where we simply drop both conditions \eqref{eq:relpsh-lax}: a relational family on a category~$\ccat$ consists of a family of sets indexed by objects of~$\ccat$ and a family of relations indexed by morphisms of~$\ccat$.
We still keep oplax natural transformations as morphisms and write $\RelFam(\ccat)$
for the resulting category.
All the results of the paper hold for the variants of relational presheaves if we modify appropriately the conditions, see \cref{sec:variants} for a detailed study. In particular, for relational families, one should remove both families of axioms in \cref{Tcrel} and both conditions \ref{Tmod-refl} and \ref{Tmod-trans} in \cref{Tmod} in order to have the above theorems holding for relational families instead of relational presheaves.

% There is another solution, which gives rise to a realization with a more natural topology: we can simply drop condition \ref{Tmod-trans} of \cref{Tmod}.
% Indeed, instead of looking at relational precubical sets, or more generally relational presheaves,
% we could have written essentially the same paper without requiring the second condition of \eqref{eq:relpsh-lax}.
% For a fixed base category $\ccat$, we call such objects \emph{$\Rel$-families}, and we note their category by $\preRelPsh(\ccat)$.
% They essentially behave like relational presheaves, 
% and their realizations are defined in the same way,
% with the exception that the corresponding functor $\ccat_\Rel\to\dcat^\op$ does not have to satisfy condition \ref{Tmod-trans} of \cref{Tmod} (see Appendix \ref{sec:variants} for a detailed study).
\begin{wrapfigure}{r}{0.15\textwidth}
  \vspace{-12mm}
  \begin{center}
    \begin{tikzpicture}
      \fill[mygray] (0,0) rectangle (1,1);
      \filldraw (0,0) circle (0.05) node[left]{$s$};
      \filldraw (1,0) circle (0.05);
      % \filldraw (0,1) circle (0.05);
      \filldraw (1,1) circle (0.05) node[right]{$t$};
      \draw
      (0,0) edge[below,thick,"$a$"'] (1,0)
      (1,0) edge[below,thick,"$b$"'] (1,1)
      (0,0) edge[below,draw=none,"$b$"] (0,1)
      (0,1) edge[below,thick,"$a$"] (1,1)
      ;
    \end{tikzpicture}
  \end{center}
  \vspace{-1cm}
\end{wrapfigure}
The category of relational families is interesting for the semantics of concurrency.
For example, consider the square on the right. 
We can now allow a path in an HDA to jump from a cube to some adjacent cube of arbitrary dimension, which is more standard.
Namely, since there are no conditions on composites, we can choose whether $s$ should be or not the $0$-face of the square, which amounts to determining whether the transitions $a$ and $b$ can or cannot start exactly at the same time.
The $\Sigma_\square^\Rel$-structure $M$ defined at the beginning of \cref{seq-real} (before modifications) is a model in this new setting, and therefore defines a realization $\RelFam(\square)\to\Top$.

\begin{definition}
  The \emph{geometric realization} $\real{{-}}_\Top:\RelFam(\square)\to\Top$ is the functor induced by the above model.
\end{definition}
We use the same notation and terminology for the functor $\RelPsh(\square)\to\Top$ obtained by precomposing $\real{{-}}_\Top$ with $\RelPsh(\square)\hookrightarrow\RelFam(\square)$ the canonical inclusion.
By \cref{rem:psh-to-pre}, this realization induces a (cocontinuous) realization of precubical sets $\Psh(\square)\to\Top$. Unlike for the sequential geometric realization, we get:

\begin{restatable}{proposition}{inducedrestate}
  \label{prop:induced}
  $\real{{-}}_\Top\circ U$ is the geometric realization of precubical sets. 
\end{restatable}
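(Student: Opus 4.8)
The plan is to leverage that $\Psh(\square)$ is the free cocompletion of $\square$: a cocontinuous functor out of $\Psh(\square)$ is, up to canonical isomorphism, the left Kan extension along the Yoneda embedding $\yo\colon\square\to\Psh(\square)$ of its restriction to $\square$, and two cocontinuous functors out of $\Psh(\square)$ are naturally isomorphic as soon as their restrictions to $\square$ are. Now $\real{{-}}_\Top\circ U$ is cocontinuous by \cref{rem:psh-to-pre}, and the geometric realization $R\colon\Psh(\square)\to\Top$ of precubical sets is by construction the cocontinuous extension of the cube functor $n\mapsto[0,1]^n$. Hence it is enough to produce a natural isomorphism of functors $\square\to\Top$ between $n\mapsto\real{U\yo(n)}_\Top$ and $n\mapsto[0,1]^n$, and the whole problem reduces to computing the realization of the representable precubical sets $\square^n:=\yo(n)$.

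First I would make $\real{U\square^n}_\Top$ explicit. Since $\real{{-}}_\Top$ is induced by the model $M$ of \cref{seq-real} ``before modifications'' — for which $M(m)=\mathopen]0,1[^m$ and $M(R_g)$ is the closed $(\dim g)$-cube carrying one open face in its boundary — and is cocontinuous, the colimit description of realizations (\cref{prop:kan}, in the relational families setting of \cref{sec:families}) computes $\real{U\square^n}_\Top$ as a \emph{finite} colimit over the category of elements of $U\square^n$. Concretely it is assembled from one copy of $M(m)$ for each face $\phi\colon m\to n$ of the $n$-cube, and one copy of $M(R_g)$ for each factorization $\phi=\psi\circ g$ of faces recorded by the relation $R_g$ in $U\square^n$, glued in along $\iota_g^0,\iota_g^1$. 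Parametrizing the copy of $M(m)$ indexed by $\phi$ by the relative interior $\mathring F_\phi$ of the corresponding face of $[0,1]^n$, and the block $M(R_g)$ attached to $\phi=\psi\circ g$ by $\mathring F_\psi\cup\mathring F_{\psi\circ g}$ (the affine face inclusion $\psi$ restricted to the relevant subspace), produces a cocone with vertex $[0,1]^n$ and hence a canonical comparison map $c_n\colon\real{U\square^n}_\Top\to[0,1]^n$.

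The heart of the argument is then to show that each $c_n$ is a homeomorphism. It is a continuous bijection: it is surjective because the relative interiors $\mathring F_\phi$ partition $[0,1]^n$, and injective because the relations of $U\square^n$ encode precisely the face incidences, so the colimit glues cubes only along common faces. For the inverse to be continuous I would show that $[0,1]^n$, equipped with the cocone above, already satisfies the universal property of the colimit, i.e.\ that a subset $W\subseteq[0,1]^n$ is open as soon as its trace on every block $M(R_g)$ of the diagram is open in that block. This is the delicate point, and it is exactly where passing through $\RelFam$ rather than using an honest model on $\RelPsh$ is needed: for a point $q$ in the relative interior of a face $F$, the finitely many blocks of the diagram meeting the corresponding point of the colimit are the $M(R_g)$ coming from factorizations with $\psi=F$ or $\psi\circ g=F$, and among these the blocks indexed by the faces $\psi$ having $F$ as a subface — each an open top cube with one lower face glued in — \emph{jointly} see a genuine box neighbourhood of $q$ in $[0,1]^n$, obtained by unioning the ``wedges'' each of them contributes along its face; conversely every box neighbourhood, being open in $[0,1]^n$, has open trace on each block. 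Hence the two topologies coincide and $c_n$ is a homeomorphism. (This is also why the naive realization fails on $\RelPsh(\square)$: there one is forced to replace these blocks by the coarser $M(R_{\psi\circ g})$ of the modified model, a disjoint union of two cubes, which no longer reassemble a box.)

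It then remains to check that $(c_n)_n$ is natural in $n\in\square$. A coface operator $d\colon m\to n$ acts on realizations through the precubical inclusion $U\square^m\hookrightarrow U\square^n$, which merely re-indexes the pieces of the colimit compatibly with the face parametrizations used to define the $c_n$, and acts on $[0,1]^n$ as the affine coface inclusion $[0,1]^m\hookrightarrow[0,1]^n$; the comparison square commutes piecewise, hence commutes. This yields the desired natural isomorphism of functors $\square\to\Top$, which by the first paragraph upgrades to a natural isomorphism $\real{{-}}_\Top\circ U\cong R$. I expect the main obstacle to be the point-set verification in the third step — reconciling the colimit topology on $\real{U\square^n}_\Top$ with the cube topology at the corner and lower-dimensional faces, where the analogous naive construction on $\RelPsh(\square)$ breaks down — while everything else is bookkeeping with categories of elements and affine geometry of the cube.
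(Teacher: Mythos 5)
Your proof is correct and follows essentially the same route as the paper: reduce by cocontinuity (via \cref{rem:psh-to-pre} and the free-cocompletion property of $\Psh(\square)$) to checking agreement on representables, then verify that the colimit topology on $\real{U\yo(n)}_\Top$ is the standard cube topology by a local analysis at each face. Your box-neighbourhood argument in the third step is precisely the content of the paper's \cref{lem:basis}, which the paper's proof simply cites instead of spelling out.
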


\section{Blowup commutes with realization}\label{sec:blowup}
Suppose fixed~$n\in\N$. The blowup of a (locally ordered) space is a best approximation of it by an $n$-euclidean space:

\vspace{-6ex}
~
\begin{wrapfigure}{r}{0.2\textwidth}
  \vspace{-5mm}
  \[
  \begin{tikzcd}
    & \Tilde{X}\arrow[d,"\beta_X"]\\
    E \arrow[r,"f"'] \arrow[ur, "\Tilde{f}", dashed] & X
  \end{tikzcd}
\]
\vspace{-1.5cm}
\end{wrapfigure}

\begin{definition}
  Given a locally ordered space~$X$, a \emph{blowup} is an $n$-euclidean locally ordered space $\tilde{X}$ equipped with a local embedding $\beta_X:\tilde{X}\to X$ such that for every $n$-euclidean local order $E$ and every local embedding $f:E\to X$, there is a unique continuous lift $\tilde{f}$ such that $\beta_X\circ\tilde f=f$:
\end{definition}
The blowup of a locally ordered space always exists and is unique up to isomorphism~\cite{chamoun2025non}.
%
% , fixing $n\in\N$, for a locally ordered space $X$, 
% there is a unique (up to isomorphism) locally ordered space $\tilde{X}$, called the \emph{blowup} of~$X$, equipped with a morphism $\beta_X:\tilde{X}\to X$ such that for every $n$-euclidean local order $E$ and every local embedding $f:E\to X$, there is a unique continuous lift $\tilde{f}$ making the following diagram commute:
% \begin{center}
  % \begin{tikzcd}
    % & \Tilde{X}\arrow[d,"\beta_X"]\\
    % E \arrow[r,"f"'] \arrow[ur, "\Tilde{f}", dashed] & X
  % \end{tikzcd}
  % \end{center}
% $\tilde{X}$ is called the blowup of $X$, and $\beta_X$ is called the blowup map.
%Moreover, $\tilde{f}$ is a local embedding.
A combinatorial description of the blowup is given in \cite[section 5]{chamoun2025non}, in the case where $X$ is the realization of a precubical set~$P$.
In this section, we show how this description can be expressed as a relational precubical set~$\tilde{P}$ over~$P$, and then we prove that the realization of $\tilde{P}$ as defined here is exactly the underlying topological space of~$\tilde{X}$,
see \cref{sec:comb-bup} for proofs and details.

\begin{definition}
  Let $P$ be a relational precubical set, $c$ a cube of $P$.
  The \emph{neighborhood} $N(c)$ of $c$ is the relational precubical subset of $P$ consisting of the cubes~$c'$ such that there exists $f$ with $c'\to_f c$.
\end{definition}

\begin{wrapfigure}{r}{0.15\textwidth}
  \vspace{-0.3cm}
  \begin{center}
    \begin{tikzpicture}[baseline={([yshift=-.5ex]current bounding box.center)}]
    \fill[color=mygray] (0,0) rectangle (1,1);
    \draw[thick] (0.5,0) -- (0.5,1);
    \draw[thick] (0,0) -- (0,1) -- (1,1) -- (1,0) -- (0,0);
    \filldraw (0.5,0) circle (.05);
    \filldraw (0.5,1) circle (.05);
    \filldraw (0,0) circle (.05);
    \filldraw (1,1) circle (.05);
    \filldraw (0,1) circle (.05);
    \filldraw (1,0) circle (.05);
  \end{tikzpicture}
  \end{center}
    \vspace{-1cm}
\end{wrapfigure}
\noindent
Recall that $I$ is the graph $\cdot{\to}\cdot$ with one edge, and $J$ is $\cdot{\to}\cdot{\to}\cdot$ the graph with two consecutive edges, and write $I_{k,m}:=I^{\otimes k}\otimes J^{\otimes m}$.
% has a unique cube of minimal dimension $k$.
Recall that a \emph{symmetric} precubical set is a precubical set $P$ such that each $P(k)$ is endowed with an action of the symmetric group $\mathfrak{S}_k$,
such that the face maps are compatible with these actions, see \cite[\S 6]{grandis2003cubical}.
Every precubical set can canonically be seen as a symmetric precubical set, by freely adding the images of these actions.
Intuitively, a $k$-cube of a symmetric precubical set $P$ is represented by the orbit of an element of $P(k)$, formalizing the idea that the order on the dimensions does not matter.
For example, $I_{1,1}$ is not isomorphic to the precubical set in the upper right corner. However, they are isomorphic as symmetric precubical sets.
\begin{definition}
  A \emph{$(n,k)$-euclidean brick} is the neighborhood of the minimal cube of some precubical set which is isomorphic to $I_{k,n-k}$ as \emph{symmetric} precubical sets.
\end{definition}
For example:
\vspace{-1.5ex}
\begin{align*}
  N(\min(I_{2,0}))
  &=
  \begin{tikzpicture}[baseline={([yshift=-.5ex]current bounding box.center)}]
    \fill[color=mygray] (0,0) rectangle (1,1);
  \end{tikzpicture}
  &
  N(\min(I_{1,1}))
  &=
  \begin{tikzpicture}[baseline={([yshift=-.5ex]current bounding box.center)}]
    \fill[color=mygray] (0,0) rectangle (1,1);
    \draw[thick] (0,0.5) -- (1,0.5);
  \end{tikzpicture}
  &
  N(\min(I_{0,2}))
  &=
  \begin{tikzpicture}[baseline={([yshift=-.5ex]current bounding box.center)}]
    \fill[color=mygray] (0,0) rectangle (1,1);
    \draw[thick] (0.5,0) -- (0.5,1);
    \draw[thick] (0,0.5) -- (1,0.5);
    \filldraw (0.5,0.5) circle (.05);
  \end{tikzpicture}
\end{align*}
\vspace{-2ex}
\begin{definition}
  A morphism of relational precubical sets $\alpha:P\to Q$ is a \emph{local embedding} if $a\to_f c$ and $b\to_f c$ implies $\alpha(a)\ne\alpha(b)$ for cubes $a,\, b$ and $c$ of $P$.
\end{definition}
Euclidean bricks are typically precubical sets whose geometric realization is isomorphic to $\R^n$, 
and the realization of a local embedding is a local embedding.
Note that $I_{k,m}$ has a unique cube of dimension $k$, and so does any euclidean brick $B$: we write $\min(B)$ for this cube, since all other cubes of $B$ have dimension $>k$.

\begin{definition}\label{def:bup}
  Let $P$ be a precubical set of maximum dimension $n$. 
  The \emph{blowup} of $P$ is the relational precubical set $\tilde{P}$ given by
  \begin{align*}
    \tilde{P}(k) = \{ S\subseteq P\,|\, & \exists \alpha : B \to S \text{ surjective local embedding,}\\
                                        & \text{for some $(n,k)$-euclidean brick } B\}\\
    \tilde{P}(f) = \{ (S,S')\,|\,& \exists \alpha : B \to S,\,\exists \alpha' : B' \to S' \text{as above, }\exists \iota:B\to B' \text{ monic}\\
                                 & \text{such that }\alpha(\min(B))\to_f \alpha'(\min(B'))  \text{ and } \alpha'\circ\iota=\alpha\}
  \end{align*}
  where `$\subseteq$' means `subobject of'. The \emph{blowup map} $\beta_P:\tilde{P}\to P$ is defined by $\beta_P(S):=\alpha(\min(B))$ for $\alpha:B\to S$ as above. 
\end{definition}
This corresponds to the combinatorial blowup of~\cite{chamoun2025non}.
More precisely, the combinatorial blowup of a precubical set $P$ is expressed there as a pointwise subset of a presheaf $\mathsf{Comb}_P:(\int P)^\op\to\Set$ on the opposite of the category of elements $\int P$ of $P$. And this corresponds to a relational presheaf, because we have an equivalence between this category of presheaves and a subcategory of relational presheaves over~$P$, which generalizes the classical equivalence between presheaves and discrete fibrations~\cite[Theorem 2.1.2]{loregian2020categorical}. Namely, define the category~$\DFib(P)$ of \emph{discrete fibrations} over~$P$ as the full subcategory of the slice category $\RelPsh(\ccat)/P$ whose objects have the unique right lifting property with respect to the inclusions $\yo_{\tilde{\ccat}^\op}(\{(y:d).\top\})\hookrightarrow \yo_{\tilde{\ccat}^\op}(\{(x:c,y:d).R_f(x,y)\})$. We then have,

\begin{restatable}{theorem}{dfibrestate}
  \label{thm:psh_to_fib}
  Let $P$ be a relational presheaf over a category $\ccat$.  There is an equivalence of categories $\phi:\Psh((\int P)^\op)\cong\DFib(P)$.
\end{restatable}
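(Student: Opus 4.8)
The plan is to generalize the classical equivalence $\Psh(\ccat) \simeq \DFib(\ccat)$ (for an ordinary category $\ccat$, between presheaves and discrete fibrations) to the relational setting, with the opposite of the category of elements $(\int P)^\op$ playing the role of the indexing category. First I would set up the two functors. Going from a presheaf $G : (\int P)^\op \to \Set$ to an object of $\RelPsh(\ccat)/P$: the underlying relational presheaf $Q$ should have $Q(c) = \coprod_{x \in P(c)} G(c,x)$ (disjoint union over elements of $P$ of shape $c$), and for a morphism $f : d \to c$ in $\ccat$, the relation $Q(f) \subseteq Q(c) \times Q(d)$ relates $(x,a)$ to $(y,b)$ precisely when $x \to_f y$ in $P$ (so $(d,y) \to (c,x)$ is a morphism in $\int P$, i.e.\ an arrow $(c,x)^\op \to (d,y)^\op$ in $(\int P)^\op$) and $b = G(\text{that arrow})(a)$. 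The projection $Q \to P$ sending $(x,a) \mapsto x$ is then easily checked to be a morphism of relational presheaves, and to satisfy the unique right lifting property against the inclusions $\yo(\{(y{:}d).\top\}) \hookrightarrow \yo(\{(x{:}c,y{:}d).R_f(x,y)\})$: a lifting problem is exactly a choice of $b \in Q(d)$ over $y$ plus a point $(x,a)$ over $x$ with $x \to_f y$, and the condition $b = G(\cdot)(a)$ forces the unique fill. One must also verify this $Q$ is genuinely lax (reflexivity of $Q(\id_c)$ uses functoriality of $G$ on identities, and the composition inclusion $Q(g)\circ Q(f) \subseteq Q(f\circ g)$ uses functoriality of $G$ on composites in $\int P$).

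Conversely, given a discrete fibration $\pi : Q \to P$, I would define $G : (\int P)^\op \to \Set$ by $G(c,x) = \pi_c^{-1}(x) \subseteq Q(c)$, the fiber over $x$, and on a morphism $(c,x) \to (d,y)$ of $\int P$ — that is, $f : d \to c$ with $x \to_f y$ — the unique right lifting property produces, for each $a \in \pi_c^{-1}(x)$, a unique $b \in \pi_d^{-1}(y)$ with $a \to_f b$; set $G(f)(a) := b$. The key points to check are that this $b$ is forced to lie in the correct fiber (because $\pi$ is a presheaf morphism, $a \to_f b$ implies $x = \pi_c(a) \to_f \pi_d(b)$, and $x \to_f y$ together with... here one uses that $\DFib$ lifts are \emph{unique}, so $\pi_d(b)$ must be $y$), and that $G$ is functorial in $(\int P)^\op$, which again follows from uniqueness of lifts applied to composites and identities.

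Then I would prove the two constructions are mutually inverse up to natural isomorphism. Starting from $G$, forming $Q$, and taking fibers recovers $G(c,x)$ on the nose up to the canonical iso $\coprod_x G(c,x) \supseteq$ fiber over $x$ $\cong G(c,x)$, and the transition maps match by construction. Starting from a discrete fibration $Q \to P$, the reconstructed relational presheaf $\coprod_{x}\pi_c^{-1}(x)$ is canonically isomorphic to $Q(c)$ (since $\pi_c$ partitions $Q(c)$ into fibers), and the relations match because, given $(a,b) \in Q(f)$ with $\pi(a) = x, \pi(b) = y$, we have $x \to_f y$ and $G(f)(a) = b$ by uniqueness of the lift; conversely any such pair arises this way. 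Finally, I would check naturality in $G$ and in $Q$, i.e.\ that both directions extend to functors and the isomorphisms are natural — morphisms of presheaves over $(\int P)^\op$ correspond to morphisms in $\RelPsh(\ccat)/P$ because a map of relational presheaves over $P$ must preserve fibers and commute with the (forced) transition maps.

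I expect the main obstacle to be the verification that the relation $Q(f)$ constructed from a presheaf $G$ is exactly right — in particular that the \emph{only} pairs in $Q(f)$ are those coming from actual arrows of $\int P$, and dually that a discrete fibration has no ``extra'' relations beyond the fibered transitions. This is where the laxity of relational presheaves interacts most delicately with the unique right lifting property: one has to be careful that $x \to_f y$ in the base $P$ does \emph{not} automatically force a relation upstairs unless we put it there, and that the lifting property is stated against the representables $\yo_{\tilde\ccat^\op}(\{(x{:}c,y{:}d).R_f(x,y)\})$ rather than against naive ``edge'' inclusions — so I would lean on \cref{Tmod-data} and the explicit description of $\ccat_\Rel$ (objects $= \Ob(\ccat) \cup \Mor(\ccat)$, with the span maps $\pi_f^0, \pi_f^1$) to make the lifting conditions fully concrete, and only then do the bookkeeping.
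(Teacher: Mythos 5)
Your overall strategy is the same as the paper's: decompose a discrete fibration into its fibers over the elements of $P$ to obtain a presheaf, and conversely rebuild a relational presheaf over $P$ as a disjoint union of the values of the presheaf, with the relation upstairs witnessed by the transition maps; the paper's proof is exactly this (and is in fact terser than your plan on the mutual-inverse and naturality checks). However, there is a genuine error in your construction: your transition maps go in the wrong direction, and this breaks the one verification that actually has content.

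Concretely, $\DFib(P)$ is defined by the unique right lifting property against $\yo_{\tilde\ccat^\op}(\{(y:d).\top\})\hookrightarrow\yo_{\tilde\ccat^\op}(\{(x:c,y:d).R_f(x,y)\})$ for $f:d\to c$: the given datum is an element $y'$ of the total object lying over $y\in P(d)$, and the unique lift is an element $x'$ over $x\in P(c)$ with $x'\to_f y'$. Hence the induced function on fibers goes $\alpha_d^{-1}(y)\to\alpha_c^{-1}(x)$, \ie it is covariant along the arrow $(d,y)\to(c,x)$ of $\int P$ --- which is exactly what a presheaf on $(\int P)^\op$, \ie a functor $\int P\to\Set$, records. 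You instead take $G$ covariant on $(\int P)^\op$, so your transition map goes $G(c,x)\to G(d,y)$, and you read the lifting property as producing, for each $a$ over $x$, a unique $b$ over $y$. With your relation ``$(x,a)\to_f(y,b)$ iff $b=G(\cdot)(a)$'', the actual lifting problem --- given $b$ over $y$ and $x\to_f y$ downstairs, find a unique $a$ over $x$ with $a\to_f b$ --- asks for a unique $a$ with $G(\cdot)(a)=b$, which fails unless $G(\cdot)$ is a bijection; so the objects you construct are discrete \emph{op}fibrations rather than elements of $\DFib(P)$, and symmetrically your $\psi$ is not well defined on genuine discrete fibrations (there need not be any $b$ over $y$ with $a\to_f b$ for a given $a$ over $x$). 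The fix is only to reverse the variance, as the paper does: it sets $\tilde P(f)=\{(a,b)\mid a\in F(c,x),\ b\in F(d,y),\ x\to_f y,\ F(w)(b)=a\}$ with $F(w):F(d,y)\to F(c,x)$, so that the unique lift of $b$ is $F(w)(b)$. With that change the rest of your plan goes through and coincides with the paper's argument.
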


\noindent
Transforming $\mathsf{Comb}_P$ into a relational presheaf over $P$ by~$\phi$ and taking the relational precubical subset corresponding to the combinatorial blowup, we get~$\tilde{P}$.

Now, given $F\in \Psh((\int P)^\op)$, there is a procedure defined in~\cite{chamoun2025non} which can be described as a geometric realization.
First, we transform $F$ into a sheaf on $|P|_\Top$, using the fact that this space has a basis consisting of open sets ``centered'' at a cube of minimal dimension.
Then, we use the correspondence between sheaves on a space and local homeomorphisms over this space~\cite[\S II.6]{maclane2012sheaves} to get a space over $|P|_\Top$.
Writing $|F|$ for this space, we can prove the following.
\begin{restatable}{theorem}{rblowuprestate}
  \label{th:real-blowup}\label{th:real-blowup}
  Let $P$ be a relational precubical set, $F\in \Psh((\int P)^\op)$.
  Then we have $|F|\cong|\phi(F)|_\Top$ as spaces over $|P|_\Top$.
\end{restatable}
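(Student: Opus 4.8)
The plan is to organize everything around the equivalence between sheaves on a space and local homeomorphisms over it~\cite[\S II.6]{maclane2012sheaves}, and to compare the two constructions as cocontinuous functors $\Psh((\int P)^\op)\to\mathrm{Sh}(|P|_\Top)\simeq\mathrm{LH}/|P|_\Top$ that agree on representables. On one side, $F\mapsto|F|$ factors as $F\mapsto\mathcal F_F\mapsto|F|=\mathrm{Et}(\mathcal F_F)$, where $\mathcal F_F$ is the sheaf obtained from $F$ through the basis of opens ``centered'' at a cube of minimal dimension; this assignment is cocontinuous, being a left Kan extension along the functor $(\int P)^\op\to\mathcal O(|P|_\Top)$ sending $(c,\sigma)$ to the centered open at $\sigma$, followed by sheafification. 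On the other side, $F\mapsto\phi(F)$ is an equivalence onto $\DFib(P)\subseteq\RelPsh(\ccat)/P$ (so it is cocontinuous), colimits in the slice are created in $\RelPsh(\ccat)$, and $\real{{-}}_\Top$ is cocontinuous; hence $F\mapsto|\phi(F)|_\Top$ is cocontinuous as a functor into $\Top/|P|_\Top$.

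The first real task is to show that the realized structure map $\pi_F\colon|\phi(F)|_\Top\to|P|_\Top$ is a local homeomorphism, so that $F\mapsto|\phi(F)|_\Top$ lands in the full subcategory $\mathrm{LH}/|P|_\Top$ of $\Top/|P|_\Top$ (and is then automatically cocontinuous into $\mathrm{LH}/|P|_\Top$, since the relevant colimits are already computed in $\Top/|P|_\Top$). For this I would use the explicit descriptions available. By \cref{thm:psh_to_fib}, $\phi(F)$ is the ``Grothendieck construction'' of $F$ over $P$: over a cube $\sigma$ of $P$ its cells are the elements of $F(c,\sigma)$, and the unique right lifting property defining $\DFib(P)$ says precisely that, given a cell over a face $\tau$ of $\sigma$ together with a relation $\sigma\to_g\tau$ in $P$, the incident cell over $\sigma$ is uniquely determined, by the action of $F$ on the arrow of $\int P$ corresponding to $g$. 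Combined with the description of $\real{{-}}_\Top$ as a colimit of open cubes glued along the blocks $M(R_g)$ (\cref{mod-to-real,prop:kan}), this lets one produce, around each point of $|\phi(F)|_\Top$ lying over the open cell of a cube $\tau$, an open neighborhood mapped homeomorphically by $\pi_F$ onto the centered open of $|P|_\Top$ at $\tau$; here one crucially uses that $P$ and $\phi(F)$ are lax — so that the only cells whose realizations abut a given one are those directly related by a face relation — to control the colimit topology, and one must carefully keep track of the combinatorics of cubes that are faces of a higher cube in several ways. I expect this step, showing that realizing a discrete fibration produces a local homeomorphism with the expected local model, to be the main obstacle.

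It then remains to check agreement on a representable $F=\yo(c,\sigma)=\mathrm{Hom}_{\int P}((c,\sigma),{-})$. There $\phi(\yo(c,\sigma))$ is the relational precubical set over $P$ whose cells over a cube $\tau$ are the ways $\sigma$ is a face of $\tau$, so its realization is exactly the centered open at $\sigma$ equipped with its natural étale map to $|P|_\Top$ — i.e.\ $\mathrm{Et}$ of the sheaf represented by that open, which is $|{\yo(c,\sigma)}|$. Since representables generate $\Psh((\int P)^\op)$ under colimits and both functors are cocontinuous into $\mathrm{Sh}(|P|_\Top)$, they are naturally isomorphic; in particular $|F|\cong|\phi(F)|_\Top$ as spaces over $|P|_\Top$. (One could instead bypass the reduction to representables by directly computing the sheaf of sections of $\pi_F$ and identifying it with $\mathcal F_F$, but the route above isolates the only genuinely delicate point, namely the local triviality of $\pi_F$.)
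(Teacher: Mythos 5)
Your global strategy is genuinely different from the paper's: you want to exhibit both $F\mapsto|F|$ and $F\mapsto|\phi(F)|_\Top$ as cocontinuous functors into $\mathrm{LH}/|P|_\Top\simeq\mathrm{Sh}(|P|_\Top)$ and compare them on representables, whereas the paper computes the underlying set of $|F|$ stalk by stalk, matches it with the explicit underlying set $\bigsqcup_{n}\phi(F)(n)\times\mathopen]0,1\mathclose[^{n}$ of the realization, and then upgrades this fibrewise bijection to a homeomorphism using that both bundles are local homeomorphisms (\cref{lem:fib,lem:fib-etale}). The one point you correctly isolate as essential --- that realizing a discrete fibration yields a local homeomorphism, because a discrete fibration induces bijections on the combinatorial neighborhoods $N^+$ and the realization is locally determined by $N^+$ (\cref{lem:basis}) --- is indeed the paper's \cref{lem:fib-etale}. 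However, your identification of the value on representables is wrong. Take $P$ to be the relational graph with one vertex $v$ and one loop $e$ (so $e\to_s v$ and $e\to_t v$, and $|P|_\Top$ is a circle). Then $\phi(\yo(0,v))$ has one vertex and two edges, one having $v$ only as source and the other only as target; its realization is an open interval mapping to the circle with every point of the open edge covered twice. This is not the centered open at $v$, which is a small embedded arc. In general the fibre of $|\phi(\yo(c,\sigma))|_\Top$ over a point in the open cell of a cube $a$ is the whole set $\{f\mid a\to_f\sigma\}$, i.e.\ one full copy of the open cell of $a$ for each face relation, and likewise the stalk of the sheaf associated to $\yo(c,\sigma)$ at such a point is $F(\dim a,a)$, not a singleton. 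The two spaces do agree, but proving this is exactly the stalk computation the paper carries out for arbitrary $F$, so the reduction to representables does not actually isolate an easier case, and as written this step would fail.

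Second, the cocontinuity of $F\mapsto|\phi(F)|_\Top$ is not established. You need the composite $\Psh((\int P)^\op)\xrightarrow{\ \phi\ }\DFib(P)\hookrightarrow\RelPsh(\square)/P\xrightarrow{\ \real{{-}}_\Top\ }\Top/|P|_\Top$ to preserve colimits; $\phi$ is an equivalence and the realization is cocontinuous, but the middle arrow is the inclusion of a full subcategory, and colimits in $\RelPsh(\square)$ are computed by quotienting and then \emph{closing relations under composition} (see \cref{lem:closure} and the proof of \cref{prop:radj}). There is no a priori reason that a colimit in $\RelPsh(\square)/P$ of discrete fibrations is again a discrete fibration, nor that it coincides with the image under $\phi$ of the colimit taken in $\Psh((\int P)^\op)$; this requires a separate argument. (A milder version of the same issue affects the other leg: the paper's $|F|$ is defined via restriction to the basis $\mathcal{B}$ followed by a \emph{right} Kan extension and the étale-space functor, and identifying this with ``left Kan extension followed by sheafification'' is itself a claim that needs the stalk computation.) None of these gaps is fatal to the strategy, but filling them essentially reproduces the paper's direct argument.
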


\noindent The idea is to first establish a set-theoretic bijection over $|P|_\Top$. Since both bundles are local homeomorphisms, we directly get that this bijection is in fact a homeomorphism.
Finally, we can prove the desired theorem.
\begin{theorem}
  Let $P$ be a precubical set of maximum dimension $n$.
  The geometric realization of the blowup $\tilde{P}$ of $P$ is the underlying space of the blowup~$\tilde{X}$ of the locally ordered realization $X$ of $P$.
\end{theorem}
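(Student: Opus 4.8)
The plan is to obtain the statement by combining \cref{th:real-blowup} with the combinatorial description of the blowup from \cite{chamoun2025non}, so that no genuinely new geometric input is needed: all the content will sit in identifying the two presentations of ``the combinatorial blowup''.

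First I would fix notation. By \cite[\S 5]{chamoun2025non} (recast here in the language of presheaves over $\int P$), the combinatorial blowup of $P$ is a pointwise sub-presheaf $C\hookrightarrow\mathsf{Comb}_P$ in $\Psh((\int P)^\op)$, and \emph{loc.\ cit.} moreover shows that the associated space $|C|$ --- obtained by sheafifying on $|P|_\Top$ and passing through the local-homeomorphism/sheaf correspondence --- is precisely the underlying topological space of the blowup $\tilde{X}$, as a space over $X$, whose underlying space is $|P|_\Top$ by \cref{prop:induced} (since $\real{{-}}_\Top\circ U$ is the usual geometric realization). On the other hand, as explained just after \cref{thm:psh_to_fib}, applying the equivalence $\phi$ to $C$ --- equivalently, taking the relational precubical subset of $\phi(\mathsf{Comb}_P)$ cut out by $C$ --- yields exactly $\tilde{P}$, equipped with the blowup map $\beta_P$ as its structure map to $P$; that is, $\phi(C)=(\tilde{P}\to P)$ in $\DFib(P)$.

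I would then simply invoke \cref{th:real-blowup} with $F:=C$: it provides a homeomorphism $|C|\cong|\phi(C)|_\Top$ over $|P|_\Top$. But $|\phi(C)|_\Top$ is by definition the geometric realization $\real{\tilde{P}}_\Top$ of $\tilde{P}$ (with projection $\real{\beta_P}_\Top$), while $|C|$ is the underlying space of $\tilde{X}$ (with projection induced by $\beta_X$). Composing the two identifications gives a homeomorphism $\real{\tilde{P}}_\Top\cong\tilde{X}$ over $X$, which is the assertion. If one additionally wants to record that the pair $(\real{\tilde{P}}_\Top,\real{\beta_P}_\Top)$ itself \emph{is} a blowup in the sense of the definition --- rather than merely homeomorphic to one --- one transports the universal property across this isomorphism, or appeals to uniqueness of the blowup up to isomorphism from \cite{chamoun2025non}.

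The step I expect to be delicate is the bookkeeping identification $\phi(C)=(\tilde{P}\to P)$: one has to match, element by element and face-relation by face-relation, the euclidean-brick description of $\tilde{P}(k)$ and $\tilde{P}(f)$ in \cref{def:bup} with the image under $\phi$ of the concrete presheaf $C$ of \cite{chamoun2025non}, and likewise check that $\beta_P$ coincides with the projection used there. This is a translation between two equivalent presentations of the same data rather than a substantive argument, but it is where the real verification lies; granting it, \cref{th:real-blowup} together with the results of \cite{chamoun2025non} delivers the theorem at once.
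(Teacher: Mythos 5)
Your overall strategy --- reduce everything to \cref{th:real-blowup} plus an identification of the two combinatorial presentations --- is the same as the paper's, but the way you set it up contains a genuine gap. You treat the combinatorial blowup of \cite{chamoun2025non} as a \emph{sub-presheaf} $C\hookrightarrow\mathsf{Comb}_P$ in $\Psh((\int P)^\op)$ and then assert $\phi(C)=(\tilde P\to P)$ in $\DFib(P)$. This does not typecheck: the combinatorial blowup is only a \emph{pointwise subset} of $\mathsf{Comb}_P$, not closed under the restriction maps (which may send a blowup element to the basepoint $\bot$), so $C$ is not an object of $\Psh((\int P)^\op)$; correspondingly, $\beta_P:\tilde P\to P$ is \emph{not} a discrete fibration (already in the graph example of the introduction, the edge $a_1$ of the blowup has empty source even though $a_1$ has a source in $P$, so the required unique lift fails), hence it cannot be in the image of $\phi$. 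Since \cref{th:real-blowup} is stated for $F\in\Psh((\int P)^\op)$ and its proof relies on $|\phi(F)|_\Top\to|P|_\Top$ being a local homeomorphism via \cref{lem:fib-etale}, you cannot invoke it with $F:=C$.

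The missing idea is a completion step: one must first enlarge $\tilde P$ to a discrete fibration $\tilde P^+$ over $P$ by adjoining a copy of $P(k)$ to each $\tilde P(k)$ (these extra elements play the role of the elements $\bot$ of $\mathsf{Comb}_P$), with face relations falling back to these default elements whenever no face exists in $\tilde P$. One then proves $\phi^{-1}(\tilde P^+)=\mathsf{Comb}_P$ --- this is the bookkeeping you correctly identified as the real verification, carried out in \cref{lem1,lem2} --- applies \cref{th:real-blowup} to the genuine presheaf $\mathsf{Comb}_P$ to obtain $|\mathsf{Comb}_P|\cong|\tilde P^+|_\Top$ over $|P|_\Top$, and only then restricts to the subspace corresponding to the non-default elements, using \cite[Theorem 5.17]{chamoun2025non}, which identifies the underlying space of $\tilde X$ with exactly that subspace of $|\mathsf{Comb}_P|$ rather than with all of it. With this adjustment your argument becomes the paper's proof.
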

\begin{proof}
  First, we need to complete $\tilde{P}$ in order to get a discrete fibration $\tilde{P}^+$ over~$P$.
  We set $\tilde{P}^+(k):=\tilde{P}(k)\sqcup P(k)$ (the elements coming from $P(k)$ correspond to the elements $\bot$ of $\mathsf{Comb}_P$), and $(a,b)\in \tilde{P}^+(f)$ if and only if $(a,b)\in\tilde{P}(f)$ or $(a,\beta_P(b))\in P(f)$ and there are no $a'$ such that $(a',b)\in\tilde{P}(f)$.
  Now, by \cref{thm:psh_to_fib}, we can transform $\tilde{P}^+$ into a presheaf $\phi^{-1}(\tilde{P}^+)$ on $(\int P)^\op$.
  Simple but tedious calculations (see \cref{lem1,lem2} and the discussion before them) give $\phi^{-1}(\tilde{P}^+)=\mathsf{Comb}_P$.
  In~\cite[Theorem 5.17]{chamoun2025non}, it is proven that the underlying space of the blowup $\tilde{X}$ is a well-characterized subspace of $|\phi^{-1}(\tilde{P}^+)|$, corresponding to the elements coming from $\tilde{P}$. 
  \cref{th:real-blowup} concludes.
  \qed
\end{proof}

% Now we are ready to prove the theorem.

% \begin{theorem}
% 	Let $P$ be a precubical set of maximum dimension $n\in\N$. 
% 	The geometric realization of the $n$-blowup $\tilde{P}$ of $P$ is the underlying space of the blowup $\tilde{X}$ of the locally ordered realization $X$ of $P$.
% \end{theorem}
% \begin{proof}
% 	By e.g. Theorem 5.12 of \cite{chamoun2025non}, the fiber of $|\tilde{P}|_\Top\to |P|_\Top$ over $x\in\{e\}\times]0,1[^{\dim(e)}$ is exactly the set of germs of euclidean embeddings of $X$ at $x$.
% 	So we have a canonical fiberwise bijection between $|\tilde{P}|$ and $\tilde{X}$. 
% 	We just need to prove that this is open and continuous.
% 	But this is obvious by the concrete description of the topology of $|\tilde{P}|_\Top$ given in lemma \ref{lem:basis},
% 	and the interpretation of the relations of $\tilde{P}$, where $S\to _f S'$ exactly means that, 
% 	seeing $S$ and $S'$ as germs of embeddings at points of cubes $e$ and $e'$ respectively, with $e\to _f e'$, 
% 	$S'$ restricts to $S$ on a neighborhood of a point of $e$ which is contained in a neighborhood of a point of $e'$.
% 	\qed
% \end{proof}

\section{Conclusion}
This paper is a first exploration of relational presheaves in concurrency theory from the point of view of their realizations.
Since the category of relational presheaves on a fixed category is well-behaved (finitely locally presentable),
it is a good setting to categorically and homotopically study some objects of interest, especially (higher dimensional) automata,
which will be the subject of a future work. 
In particular, relational automata seem to be a good alternative to automata with $\epsilon$-transitions~\cite[\S 1.1.4]{sakarovitch2009elements}. This could be for instance exploited in proving Kleene-like theorems (for example \cite{fahrenberg2024kleene}).
The simple combinatorial expression of the blowup also suggests to study directly this construction in relational precubical sets,
and a homotopical (in the sense of Quillen) definition of euclidean relational precubical sets seems plausible.
Finally, on an abstract level, it would be interesting to develop furthur the theory of relational presheaves,
for example by adding a Grothendieck topology on the base category and trying to define relational \emph{sheaves}.

% \newpage
\bibliographystyle{splncs04}
\bibliography{b}

\newpage
\appendix
\crefalias{section}{appendix}

\section{Variants of relational presheaves}
\label{sec:variants}
We write $\Par$ for the category of sets and partial functions.
There is a canonical functor $\Par\to\Rel$ sending a partial function to its graph, allowing us to consider~$\Par$ as a subcategory of~$\Rel$.

A \emph{quiver map} $F:\ccat\to \dcat$ between two categories $\ccat$ and $\dcat$ is a morphism of directed graphs between the underlying graphs of $\ccat$ and $\dcat$.

\begin{definition}
  We introduce the following variants of relational presheaves.
  \begin{enumerate}[(i)]
  \item A \emph{relational family} $P$ over $\ccat$ is a quiver map $\ccat^\op\to \Rel$. 
    We use the same notation as for relational presheaves.
  \item A \emph{partial presheaf} $P$ over $\ccat$ is a lax functor $\ccat^\op\to \Par$, 
    i.e. a collection of sets $(P(c))_{c\in\Ob(\ccat)}$ and partial functions $(P(f))_{f\in\Mor(\ccat)}$, 
    subject to the following:
    \[P(\id_c)=\id_{P(c)},\quad P(f)\circ P(g)= P(f\circ g) \text{ when both are defined}\]
  \end{enumerate}
\end{definition}
Obviously every relational presheaf is a relational family. 
Note that a partial presheaf $P$ is exactly a relational presheaves such that $P(f)$ is a partial function for all morphisms $f$ of $\ccat$.
%Now we define what is a \emph{morphism} of relational presheaves.
%Many definitions exist in the litterature (see \cite{sobocinski2015relational} for a discussion), 
%and we chose the one which is the most natural ($2$-)categorically,
%and which is also the best one from the model-theoretic point of view,
%since it will correspond to homomorphisms of models.

\begin{definition}
  We have the following categories associated to the previous variants.
  \begin{enumerate}[(i)]
  \item A \emph{morphism} of relational families $\alpha:P\Rightarrow  F$ is an oplax natural transformation, 
    i.e. a collection of functions $(P(c)\xrightarrow{\alpha_c}F(c))_{c\in\Ob(\ccat)}$ such that 
    $a\to _f b$ implies $\alpha_c(a)\to _f \alpha_d(b)$ for every $f:d\to c$.
  \item The category of relational families and morphisms of relational families over $\ccat$ is noted $\RelFam(\ccat)$. 
  \item The full subcategory of $\RelPsh(\ccat)$ consisting of partial presheaves in noted $\ParPsh(\ccat)$.
  \end{enumerate}
\end{definition}

\begin{definition}\label{def:Tcrel}
  We define the theory $\Tcpar$ on $\Sigma_\ccat^\Rel$ as an extension of $\Tcrel$ with the following axiom:
    \[R_f(x,z)\AND R_f(y,z) \vdash_{x:c,y:c,z:d} x=y\]
    for every morphism $f:d\to c$.
\end{definition}
The following is straightforward:

\begin{proposition}
  $\RelFam(\ccat)$ is the category of set-based $\Sigma_\ccat^\Rel$-structures, and $\ParPsh(\ccat)$ is the category of set-based models of $\Tcpar$.
\end{proposition}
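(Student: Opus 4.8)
The plan is to unfold both claims to the level of raw data and check them against the definitions of "set-based $\Sigma_\ccat^\Rel$-structure" and "model of $\Tcpar$" respectively, using \cref{relpsh-models} as a template. First I would recall that a set-based $\Sigma_\ccat^\Rel$-structure is, by definition, an interpretation of the sorts (the objects of $\ccat$) as sets and of each relation symbol $R_f\subseteq c\times d$ (for $f:d\to c$) as an actual relation $P(f)\subseteq P(c)\times P(d)$, with no further constraints since $\Sigma_\ccat^\Rel$ has no function symbols and the structure is not required to satisfy any sequents. This is verbatim the data of a family of sets $(P(c))_{c\in\Ob(\ccat)}$ together with a family of relations $(P(f))_{f\in\Mor(\ccat)}$ with $P(f)\subseteq P(c)\times P(d)$ for $f:d\to c$ — i.e. a quiver map $\ccat^\op\to\Rel$, which is precisely a relational family. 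On the morphism side, a homomorphism of $\Sigma_\ccat^\Rel$-structures is a family of functions $(\alpha_c:P(c)\to Q(c))_{c\in\Ob(\ccat)}$ that preserves each relation symbol, i.e. $a\to_f b$ implies $\alpha_c(a)\to_f\alpha_d(b)$; this is exactly the definition of a morphism of relational families (oplax natural transformation). Hence $\RelFam(\ccat)$ and the category of set-based $\Sigma_\ccat^\Rel$-structures have the same objects and the same morphisms, composed the same way, so they are equal (or at least canonically isomorphic), which is all the first assertion requires.

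For the second assertion I would argue that a set-based model of $\Tcpar$ is the same thing as a set-based model of $\Tcrel$ — which by \cref{relpsh-models} is a relational presheaf — subject additionally to the single extra axiom of \cref{def:Tcrel}. So the point is to identify, among relational presheaves, those satisfying $R_f(x,z)\land R_f(y,z)\vdash_{x:c,y:c,z:d} x=y$ for every $f:d\to c$ with the partial presheaves. Interpreting this sequent: for all $x,y\in P(c)$ and $z\in P(d)$, if $x\to_f z$ and $y\to_f z$ then $x=y$; equivalently, for each $z$ there is at most one $x$ with $x\to_f z$, i.e. $P(f)$, read as a relation $P(f)\subseteq P(c)\times P(d)$ with $f:d\to c$, is a partial function when viewed in the direction making it a (co-)function — concretely, it is the graph of a partial function from $P(c)$ to $P(d)$... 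I would need to be careful about the variance/direction here, since $P(f)$ for $f:d\to c$ is a relation between $P(c)$ and $P(d)$, and the axiom says the element of $P(d)$ determines the element of $P(c)$, so $P(f)$ is a partial function $P(c)\rightharpoonup P(d)$, matching the convention that $P$ is a lax functor $\ccat^\op\to\Par$. Combined with the lax-functoriality inclusions \eqref{eq:relpsh-lax} already guaranteed by $\Tcrel$, reflexivity $\id_{P(c)}\subseteq P(\id_c)$ for a partial function forces $P(\id_c)=\id_{P(c)}$ (a reflexive partial function on a set is the identity), and $P(g)\circ P(f)\subseteq P(f\circ g)$ between partial functions is exactly the condition "$P(f)\circ P(g)=P(f\circ g)$ when both are defined" of the definition of partial presheaf. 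So models of $\Tcpar$ are exactly partial presheaves, and since $\ParPsh(\ccat)$ is defined as the full subcategory of $\RelPsh(\ccat)$ on partial presheaves while $\Tcpar$-$\Mod(\Set)$ is likewise full in $\Tcrel$-$\Mod(\Set)=\RelPsh(\ccat)$, the categories agree on morphisms automatically. I would conclude by invoking \cref{relpsh-models} and this identification.

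The only genuinely delicate point — what I expect to be the "main obstacle," though it is minor — is bookkeeping the directions of relations and the partial-function convention so that the extra axiom really corresponds to $P(f)$ being a partial function in the sense intended by $\ccat^\op\to\Par$, rather than its converse; and checking that lax-functoriality plus "each $P(f)$ a partial function" collapses the inclusions of \eqref{eq:relpsh-lax} into the equalities demanded in the definition of partial presheaf (reflexive partial function $=$ identity, and the composition equation holds on the domain of definition). Everything else is a direct transcription of definitions, which is presumably why the paper simply says "The following is straightforward."
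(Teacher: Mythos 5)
Your overall strategy is the right one and is exactly what the paper intends by ``straightforward'': both halves are definitional unfoldings, and the first half (set-based $\Sigma_\ccat^\Rel$-structures $=$ relational families, homomorphisms $=$ oplax transformations) is a verbatim match of definitions with nothing to add.

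The one genuine problem is in the sentence where you match the extra axiom of $\Tcpar$ to the partial-function convention --- precisely the point you yourself flagged as delicate. You write that ``the axiom says the element of $P(d)$ determines the element of $P(c)$, so $P(f)$ is a partial function $P(c)\rightharpoonup P(d)$''; this is a non sequitur. If the $P(d)$-coordinate determines the $P(c)$-coordinate, then $R_f\subseteq c\times d$ is the \emph{converse} of (the graph of) a partial function $P(d)\rightharpoonup P(c)$; it is single-valuedness in the \emph{other} coordinate, namely $R_f(x,y)\land R_f(x,z)\vdash_{x:c,\,y:d,\,z:d} y=z$, that says $P(f)$ is a partial function $P(c)\rightharpoonup P(d)$ in the sense required by ``lax functor $\ccat^\op\to\Par$'' (recall $a\to_f b$ corresponds to $b=P(f)(a)$ for an ordinary presheaf, per the proof that $U$ is full). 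Taken literally, the axiom of \cref{def:Tcrel} asserts injectivity rather than functionality, and with that reading the second claim of the proposition would fail; the charitable reading is that the axiom's variables are transposed, in which case your argument goes through exactly as written --- including your correct observations that reflexivity plus single-valuedness forces $P(\id_c)=\id_{P(c)}$, that the lax composition inclusion yields the composition equation on domains of definition, and that fullness of both subcategories makes the morphism-level comparison automatic. So the defect is not in your decomposition but in the one step where you silently flip the direction to make the axiom say what it needs to say; you should either state the corrected axiom or explicitly note the transposition.
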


For the realization, everything essentially works the same with relational families or partial presheaves instead of relational presheaves. 
A model corresponding to a relational family does not have to satisfy conditions~\ref{Tmod-refl} and~\ref{Tmod-trans} of \cref{Tmod},
and a model $M$ corresponding to a partial presheaf has to satisfy an additional condition: the obvious functor 
$M(c)\sqcup M(c)\sqcup M(d)\to M(R_f)\sqcup_{M(d)}M(R_f)$ factors through the codiagonal $M(c)\sqcup M(c)\sqcup M(d)\to M(c)\sqcup M(d)$.
We also have a version of \cref{prop:kan}, with essentially the same proof:

\begin{proposition}\label{prop:kan-pre}
	For every $\Sigma_\ccat^\Rel$-structure $M:\ccat_\Rel\to\dcat^\op$ in $\dcat^\op$, the corresponding realization $\hat{M}:\RelFam(\ccat)\to\dcat$ is given by:
  \[\hat{M}(P)=\colim_{(x,a)\in \int F(P)}M(x)\]
\end{proposition}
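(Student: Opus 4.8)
The plan is to imitate the proof of \cref{prop:kan} almost line for line, the one genuine change being that one now works over the cartesian syntactic category of $\Sigma_\ccat^\Rel$ regarded as a theory with \emph{no} axioms — call it $\tilde{\ccat}_0$ — rather than over $\tilde{\ccat}$. First I would set up the structural background: a set-based $\Sigma_\ccat^\Rel$-structure is exactly a model of the empty theory, so \cref{prop:syntactic} together with the chain of equivalences used in the proof of \cref{real-model} gives $\RelFam(\ccat)\isoto\Lex(\tilde{\ccat}_0,\Set)\isoto\Ind(\tilde{\ccat}_0^\op)$, under which a $\Sigma_\ccat^\Rel$-structure $M$ valued in $\dcat^\op$ (i.e.\ a left-exact $\tilde{\ccat}_0\to\dcat^\op$) corresponds to a cocontinuous realization $\hat M$, and the restriction functor $F$ along $\ccat_\Rel\hookrightarrow\tilde{\ccat}_0$ preserves filtered colimits by the same two-step factorization argument as \cref{lem:filtered}. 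Since $\Ind(\tilde{\ccat}_0^\op)$ is the free completion of $\tilde{\ccat}_0^\op$ under filtered colimits and both $\hat M$ and $F_\dcat(M)_!$ are cocontinuous, it then suffices to prove $F_\dcat(M)_!\circ F=\hat M$ on the objects and morphisms of $\tilde{\ccat}_0$.

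Next I would normalize objects just as in \cref{prop:kan}: by \cite[Lemma~D1.4.4]{johnstone2002sketches} every object of $\tilde{\ccat}_0$ is a conjunction of atomic formulas, and the isomorphism $\{(x_1{:}c,x_2{:}c,\mathbf x).(x_1{=}x_2)\wedge\phi\}\cong\{(x{:}c,\mathbf x).\phi[x_1,x_2\mapsto x]\}$ — valid because the displayed formula is functional in both directions — lets one eliminate all equalities, so I may assume the object is a normalized formula $\{\mathbf x.\phi\}$ with $\phi=\bigwedge_k R_{f_k}(x_{i_k},x_{j_k})$. As in the main text, $F$ sends objects of $\ccat_\Rel$ to representable presheaves. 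The one place where the absence of axioms helps is that ``$\phi\vdash_{\mathbf x}R_f(x_i,x_j)$ is provable'' now simply means ``$R_f(x_i,x_j)$ is one of the conjuncts of $\phi$''.

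The core computation then runs exactly as in \cref{prop:kan}: in $\tilde{\ccat}_0$ the object $\{\mathbf x.\phi\}$ is the pullback over $\{\mathbf x.\top\}=\prod_i\{x_i.\top\}$ of the subobjects $\{\mathbf x.R_{f_k}(x_{i_k},x_{j_k})\}$, so dually one gets a colimit $\{\mathbf x.\phi\}=\colim_{(x,a)\in\int F(\{\mathbf x.\phi\})}x$ in $\tilde{\ccat}_0^\op$. I would check this by testing against a finitely presented $\Sigma_\ccat^\Rel$-structure $N$ (using $\tilde{\ccat}_0^\op\simeq$ finitely presented structures): a morphism $\{\mathbf x.\phi\}\to N$ is a tuple $\mathbf y$ of $N$ with $N\models\phi(\mathbf y)$, i.e.\ a family $(y_i)$ such that each conjunct $R_{f_k}(x_{i_k},x_{j_k})$ of $\phi$ forces $N\models R_{f_k}(y_{i_k},y_{j_k})$, which is precisely a cocone from the corresponding diagram to $N$. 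Applying the cocontinuous $\hat M$ yields $\hat M(\{\mathbf x.\phi\})=\colim M(x)=F_\dcat(M)_!(F(\{\mathbf x.\phi\}))$, and the extension to morphisms of $\tilde{\ccat}_0$ is as in \cref{prop:kan}, a morphism of normalized formulas being a map of contexts preserving the listed relations.

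The only step that really requires attention — and it is tamer here than in \cref{prop:kan} — is the identification $\tilde{\ccat}_0^\op\simeq(\text{finitely presented }\Sigma_\ccat^\Rel\text{-structures})$ together with the resulting colimit presentation of normalized objects; once this is in place, everything downstream is routine, the removal of the axioms only collapsing the provability bookkeeping to membership among the conjuncts.
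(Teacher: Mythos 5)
Your proposal is correct and matches the paper's own treatment: the paper proves \cref{prop:kan-pre} simply by asserting it has ``essentially the same proof'' as \cref{prop:kan}, and your write-up supplies exactly that proof, transported to the cartesian syntactic category of the axiom-free theory on the signature $\Sigma_\ccat^\Rel$, with the identification $\RelFam(\ccat)\isoto\Lex(\tilde{\ccat}_0,\Set)\isoto\Ind(\tilde{\ccat}_0^\op)$ playing the role of the chain of equivalences in \cref{real-model}. The observation that $\Tcrel$-provability of $R_f(x_i,x_j)$ from $\phi$ collapses to membership among the conjuncts is the only substantive change, and it is the right one.
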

We end this section by a useful lemma:
\begin{lemma}\label{lem:closure}%[closure under composition]
  The inclusion functor $U':\RelPsh(\ccat)\hookrightarrow\RelFam(\ccat)$ admits a left adjoint.
\end{lemma}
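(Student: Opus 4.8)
The statement to prove is \cref{lem:closure}: the inclusion $U':\RelPsh(\ccat)\hookrightarrow\RelFam(\ccat)$ has a left adjoint. The plan is to exploit the fact, established earlier, that both categories are categories of set-based models of cartesian theories — namely $\RelFam(\ccat)$ consists of all $\Sigma_\ccat^\Rel$-structures (no axioms) while $\RelPsh(\ccat)$ consists of the models of $\Tcrel$, obtained by imposing the two families of axioms of \cref{Tcrel} on top of the bare signature. Thus $U'$ is exactly the forgetful functor associated to a map of cartesian theories which only adds axioms (keeping the same signature). The left adjoint will be the ``free $\Tcrel$-model on a $\Sigma_\ccat^\Rel$-structure'' functor, which simply freely closes a relational family under reflexivity and composition of its relations.

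First I would give the concrete construction. Given a relational family $P$, define $P^\sharp$ to have the same underlying sets $P^\sharp(c):=P(c)$, and let $P^\sharp(f)\subseteq P(c)\times P(d)$ for $f:d\to c$ be the smallest family of relations, indexed by morphisms of $\ccat$, that contains each $P(f)$, contains the diagonal $\Delta_{P(c)}\subseteq P^\sharp(\id_c)$, and is closed under the rule: $a\to_f b$ and $b\to_g c$ implies $a\to_{g\circ f}c$. Concretely, $(a,b)\in P^\sharp(f)$ iff there is a factorization $f=f_k\circ\cdots\circ f_1$ in $\ccat$ (possibly with some $f_i$ identities) and elements $a=a_0,a_1,\dots,a_k=b$ with $(a_{i-1},a_i)\in P(f_i)$ for each $i$ (with the convention that for an identity factor we use a diagonal pair); one checks this is well-defined and is the least such closure. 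By construction $P^\sharp$ satisfies \eqref{eq:relpsh-lax}, hence is a relational presheaf, and there is an evident morphism $\eta_P:P\To U'P^\sharp$ of relational families which is the identity on underlying sets.

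Next I would verify the universal property: for any relational presheaf $Q$ and any morphism $\alpha:P\To U'Q$ of relational families (a family of functions $\alpha_c:P(c)\to Q(c)$ with $a\to_f b \Rightarrow \alpha(a)\to_f\alpha(b)$), there is a unique morphism $\bar\alpha:P^\sharp\To Q$ of relational presheaves with $U'\bar\alpha\circ\eta_P=\alpha$. Since $\eta_P$ is the identity on objects, $\bar\alpha$ must have $\bar\alpha_c=\alpha_c$; this forces uniqueness, and for existence one checks that these functions do respect the relations of $P^\sharp$ — which follows by induction on the length of the factorization witnessing a pair in $P^\sharp(f)$, using that $Q$ itself satisfies the laxity inclusions \eqref{eq:relpsh-lax} (so reflexivity and composability of witnesses in $P$ are transported to $Q$). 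Naturality of $\eta$ is immediate since everything is the identity on underlying sets. This establishes the adjunction. (Alternatively, and more slickly, one can simply invoke \cref{relpsh-lfp}: $\RelPsh(\ccat)$ and $\RelFam(\ccat)$ are both locally finitely presentable, and $U'$ is accessible — it preserves all limits and filtered colimits, being computed on underlying sets — so it has a left adjoint by the adjoint functor theorem for locally presentable categories; but the explicit construction is more informative here.)

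The main obstacle is purely bookkeeping: making precise the ``generated by'' description of $P^\sharp(f)$ and confirming it is the least family closed under the two rules simultaneously over all morphisms $f$ of $\ccat$ at once (the rules are mutually entangled through composition), and then checking that $P^\sharp$ genuinely lies in $\RelPsh(\ccat)$ rather than just being a candidate. Once the closure is correctly set up, the universal property is a routine induction, so I expect no conceptual difficulty beyond getting this definition clean.
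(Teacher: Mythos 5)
Your proposal is correct and follows essentially the same route as the paper: the paper also constructs the left adjoint as the free closure of a $\Sigma_\ccat^\Rel$-structure under reflexivity and composition (presented there as an $\omega$-chain $M_0, M_1,\dots$ whose colimit is the closure, rather than your one-shot ``least family'' description), and verifies the adjunction by showing that homomorphisms into a $\Tcrel$-model are unchanged at each stage. The only caveat is the bookkeeping you already flag: the indexing of your witnessing chains $a=a_0,\dots,a_k=b$ against the factorization $f=f_k\circ\cdots\circ f_1$ needs to be aligned with the convention $P(f)\subseteq P(c)\times P(d)$ for $f:d\to c$, but this is cosmetic.
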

\begin{proof}
	Let $M$ be a $\Sigma_\ccat^\Rel$-structure, $N$ a model of $\Tcrel$. 
	We build a model $M_\mathrm{comp}$ of $\Tcrel$ by induction:
	\begin{enumerate}
		\item $M_0$ is the structure having same values as $M$ on sorts and non-identity morphisms, and with $M_0(\id_c):=\Delta_{M(c)}\sqcup M(\id_c)$ for every object $c$;
		\item given $M_n$, $M_{n+1}$ is the structure whose value on sorts is the same as $M_n$, and such that 
		\[M_{n+1}(R_f):=\bigcup_{f=g\circ h}M_n(R_g)\times_{M_n(\dom(g))}M_n(R_h)\]
		(as subobjects of $M_n(\dom(f))\times M_n(\cod(f))$);
		\item $M_\mathrm{comp}=\colim_{n\in\N}M_n$.
	\end{enumerate}
	To see that $M_\mathrm{comp}$ is a model of $\Tcrel$,
	consider a situation $f=g\circ h$ in $\ccat$, $x\to _g y$ and $y\to _h z$ in $M_\mathrm{comp}$.
	Then there is some $n\in\N$ such that $x\to _g y$ and $y\to _h z$ in $M_n$.
	But then $x\to _{g\circ h} z$ in $M_{n+1}$ by definition, so it is true in $M_\mathrm{comp}$.
	Now we prove by induction that $\Hom_{\Sigma_\ccat^\Rel}(M,N)=\Hom_{\Sigma_\ccat^\Rel}(M_n,N)$ for all $n\in\N$. 
	It is true for $n=0$. Suppose it is true for some $n$.
	Take $\alpha:M_n\to N$, $f=g\circ h$, $x\to _g y$, $y\to _h z$ in $M_n$.
	Then $\alpha(x)\to _g \alpha(y)$ and $\alpha(y)\to _h\alpha(z)$,
	but then $\alpha(x)\to _{g\circ h}\alpha(z)$ because $N$ is a model of $\Tcrel$.
	This concludes the induction.
	Now by a similar argument, functoriality of this construction is clear (the action on morphisms is the identity). 
	This finishes the proof because
	\begin{align*}
			\Hom_{\Sigma_\ccat^\Rel}(M_\mathrm{comp},N) & =  \lim_{n\in\N}\Hom_{\Sigma_\ccat^\Rel}(M_n,N)\\
			                                            & = \lim_{n\in\N}\Hom_{\Sigma_\ccat^\Rel}(M,N)\\
														& = \Hom_{\Sigma_\ccat^\Rel}(M,N)
	\end{align*}
        \qed
\end{proof}
With the notation of the previous proof, $M_\mathrm{comp}$ will be called the \emph{closure under composition} of $M$.
Note that the inclusion is full and faithful, so the counit of the adjunction is an isomorphism by Theorem IV.3.1 of \cite{mac1998categories},
i.e.~if we close a relational presheaf under composition, we get the same relational presheaf.
This is also obvious by construction.

\section{Presheaves as relational presheaves}\label{sec:presheaves}
First we give a proof of \cref{th:U}
\begin{proposition}
  $U$ is full and faithful.
\end{proposition}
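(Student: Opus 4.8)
The statement to prove is that the comparison functor $U:\Psh(\ccat)\to\RelPsh(\ccat)$ is full and faithful. The plan is to work concretely with the pointwise description of morphisms in both categories. Recall that a presheaf $P\in\Psh(\ccat)$ consists of sets $P(c)$ and functions $P(f):P(c)\to P(d)$ for $f:d\to c$, and $U(P)$ is the relational presheaf with the same sets and with $UP(f)$ the functional relation (graph) of $P(f)$, namely $a\to_f b$ iff $P(f)(a)=b$. A morphism $\alpha:P\To Q$ in $\Psh(\ccat)$ is a family of functions $(\alpha_c:P(c)\to Q(c))_c$ commuting with the face maps, and a morphism $U(P)\To U(Q)$ in $\RelPsh(\ccat)$ is, by the description following \cref{relpsh}, a family of functions $(\alpha_c)_c$ such that $a\to_f b$ in $UP$ implies $\alpha_c(a)\to_f\alpha_d(b)$ in $UQ$.

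\textbf{Faithfulness} is immediate: on underlying families of functions, $U$ acts as the identity, so two parallel morphisms of presheaves mapped to the same morphism of relational presheaves must already be equal. The content is \textbf{fullness}. Given a morphism $\alpha:U(P)\To U(Q)$, which amounts to a family $(\alpha_c:P(c)\to Q(c))_c$, I need to check this same family is a morphism $P\To Q$ in $\Psh(\ccat)$, i.e.\ that $\alpha_d\circ P(f)=Q(f)\circ\alpha_c$ for every $f:d\to c$. Fix $a\in P(c)$ and let $b=P(f)(a)$, so $a\to_f b$ in $UP$. The oplax-naturality condition gives $\alpha_c(a)\to_f\alpha_d(b)$ in $UQ$, which, since $UQ(f)$ is the graph of the function $Q(f)$, says exactly $Q(f)(\alpha_c(a))=\alpha_d(b)=\alpha_d(P(f)(a))$. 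This holds for all $a$, giving the required commuting square.

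\textbf{Main obstacle.} There is essentially no obstacle here; the only point requiring a moment's care is that the relation $UQ(f)$, being functional, is \emph{single-valued}, so "being in the relation" determines the image uniquely — this is what lets us upgrade the inclusion-style oplax condition into a genuine equation. It is worth remarking that this is precisely where the functionality of the target is used: for a general relational presheaf $Q$ (not in the image of $U$) the argument would fail, which is consistent with $U$ being full only onto its image. One should also note in passing that $U$ is well-defined on morphisms at all, i.e.\ that a natural transformation of presheaves does yield an oplax natural transformation after postcomposition with $\Set\to\Rel$; but this is the (strict) functoriality of $\Set\to\Rel$ together with the fact that a commuting square of functions certainly gives the required containment of relations, so it needs only a one-line justification.
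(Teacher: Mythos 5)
Your proof is correct and takes essentially the same route as the paper's: faithfulness is read off from the fact that $U$ does not change the underlying families of functions (equivalently, from faithfulness of $\Set\to\Rel$), and fullness follows because $U(Q)(f)$ is the graph of the function $Q(f)$, so the oplax preservation of relations forces the naturality squares $\alpha_d\circ P(f)=Q(f)\circ\alpha_c$ to commute. Your remark that single-valuedness of the target relation is the crucial point is exactly the content of the paper's chain of implications.
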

\begin{proof}
  Faithfulness follows immediately from the fact that the inclusion functor $\Set\to\Rel$ is faithful.
  Consider a morphism $\alpha:U(P)\to U(Q)$: it consists in a family of functions $\alpha_c:P(c)\to Q(c)$ indexed by objects $c\in\ccat$ which preserve the relations.
  So for all $f:d\to c$ in $\ccat$, $x\in P(c)$, $y\in P(d)$, we have
  \begin{align*}
    y=x.f & \Leftrightarrow (x,y)\in U(P)(f) \\
    & \Rightarrow (\alpha_d(x),\alpha_c(y))\in U(Q)(f)\\
    & \Leftrightarrow \alpha_c(y)=\alpha_d(x).f
  \end{align*}
  where $x.f$ is a notation of $P(f)(x)$.
  Therefore, $\alpha_c(x.f)=\alpha_c(x).f$ for all $f$ and~$x$, which proves fullness.
  \qed
\end{proof}

\begin{proposition}\label{prop:radj}
	$U$ has a right adjoint.
\end{proposition}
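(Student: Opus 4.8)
The plan is to show that $U$ preserves all small colimits and then to conclude by the special adjoint functor theorem: $\Psh(\ccat)$ is a Grothendieck topos, hence cocomplete, co-well-powered, and generated by the set of representables, so every cocontinuous functor out of it is a left adjoint.

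To prove that $U$ is cocontinuous, I would factor it through the category of relational families. Let $U'\colon\RelPsh(\ccat)\hookrightarrow\RelFam(\ccat)$ denote the inclusion and set $V:=U'\circ U\colon\Psh(\ccat)\to\RelFam(\ccat)$; concretely, $V$ replaces each function $P(f)$ by its graph. By \cref{lem:closure}, $U'$ has a left adjoint $L$ (closure under composition), and since $U'$ is fully faithful the counit $L\circ U'\cong\id$ is invertible; hence $U\cong L\circ V$. As $L$ is a left adjoint it preserves colimits, so it suffices to show that $V$ does. For that one uses the explicit description of colimits on both sides: colimits in $\Psh(\ccat)$ are computed pointwise in $\Set$, while colimits in $\RelFam(\ccat)$ are computed pointwise on the underlying sets as well, each relation being the smallest one making the colimiting cocone a morphism of structures, i.e. the image of the colimit of the individual relations. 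It is then enough to check coproducts and coequalizers. Coproducts are immediate, and for the coequalizer $Q\twoheadrightarrow S$ of a pair $f,g\colon P\rightrightarrows Q$ in $\Psh(\ccat)$, each structure map of $Q$ descends to the corresponding one of $S$, and the image of the graph of $Q(h)$ under the surjection $Q(d)\times Q(c)\twoheadrightarrow S(d)\times S(c)$ is exactly the graph of $S(h)$ — which is precisely the relation the coequalizer in $\RelFam(\ccat)$ assigns. Thus $V(S)$ is that coequalizer, $V$ is cocontinuous, and therefore so is $U\cong L\circ V$.

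\textbf{Main obstacle.} The difficulty is conceptual rather than computational: colimits in $\RelPsh(\ccat)$ are \emph{not} pointwise (it is the category of models of a cartesian theory, not a presheaf category), so one cannot argue directly that $U$ — which is pointwise the graph functor $\Set\to\Rel$ — preserves them. Factoring through $\RelFam(\ccat)$, where colimits are pointwise, and then applying the reflection of \cref{lem:closure} circumvents this, reducing the statement to the routine fact that taking images of graphs of functions is compatible with quotients. Once cocontinuity of $U$ is established, the special adjoint functor theorem applied to $\Psh(\ccat)$ (cocomplete, co-well-powered, with a generating set of representables) produces the desired right adjoint.
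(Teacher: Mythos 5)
Your proof is correct and follows essentially the same route as the paper: both establish that $U$ is cocontinuous by checking coproducts and coequalizers explicitly and then invoke the special adjoint functor theorem. Your factorization $U\cong L\circ V$ through $\RelFam(\ccat)$ is a clean repackaging of the paper's argument, which instead computes the coequalizer directly in $\RelPsh(\ccat)$ and applies the closure under composition of \cref{lem:closure} at that point.
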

\begin{proof}
  By the special adjoint functor theorem (\cite[Corollary \S V.8]{mac1998categories} and \cite[Theorem 1.58]{adamek1994locally}),
  it is enough to show that $U$ preserves colimits, which we do by computing explicitly colimits in relational presheaves and observing that they coincide with ones in presheaves. Since coproducts and coequalizers generate all colimits, it is enough to handle those two particular cases.

  For coproducts, consider a family $(P_i)_{i\in I}$ of presheaves indexed by a set $I$. 
  We define a relational presheaf $P$ by setting $P(c)=\bigsqcup_{i\in I} P_i(c)$ for every $c\in\ccat$ and
  \[P(f)=\{(x,y)\,|\,\exists i\in I,\, x\in P_i(c),\, y\in P_i(d),\, (x,y)\in P_i(f)\}\]
  for all $f:c\to d$. 
  It is straightforward to check that this is indeed the coproduct in $\RelPsh(\ccat)$, 
  and that it coincides with the usual coprroduct in $\Psh(\ccat)$.

  For coequalizers, consider a diagram $P\rightrightarrows Q$, with morphisms $\alpha$ and $\beta$. 
  Now define a relational presheaf $R$ by setting 
  $R(c)$ to be the quotient of $Q(c)$ be the equivalence relation generated by $\alpha(x)\sim\beta(x)$ for all $x\in P(c)$, for all $c$,
  and $[x]\to_f[y]$ if there are $x'\sim x$ and $y'\sim y$ such that $x'\to_f y'$.
  However, this does not necessarily define a relational presheaf, so we need moreover to close relations under composition (see \cref{lem:closure}).
  Now it is easy to check that $R$ is the coequalizer of the diagram, 
  and that it is the usual coequalizer for presheaves 
  (we do not need to close under composition in this case).
  \qed
\end{proof}

\begin{remark}\label{rem:psh-to-pre}
  Since we do not need to close under composition for presheaves in the previous proof,
the inclusion $\Psh(\ccat)\to \RelFam(\ccat)$ also preserves colimits,
since colimits in $\RelFam(\ccat)$ are computed the same way, but without closing by composition.
\end{remark}

\begin{proposition}
	$U$ has a left adjoint.
\end{proposition}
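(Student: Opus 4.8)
The plan is to mirror the proof of \cref{prop:radj}, but in dual form: by the special adjoint functor theorem (applicable since, as noted before \cref{prop:radj}, both $\Psh(\ccat)$ and $\RelPsh(\ccat)$ are locally presentable by \cref{relpsh-lfp}), it suffices to show that $U$ preserves all small limits. One can equally phrase this via the adjoint functor theorem for locally presentable categories: $U$ is accessible because it even preserves colimits by \cref{prop:radj}, so once again the only thing left to check is preservation of limits.

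\textbf{Key steps.}
First I would reduce to products and equalizers, since these generate all small limits. Then I would use that limits here are computed pointwise: under the identification $\RelPsh(\ccat)\simeq\Lex(\tilde\ccat,\Set)$, the limit $R$ of a small diagram $(P_i)_i$ of relational presheaves satisfies $R(c)=\lim_i P_i(c)$ on each sort $c$, while $R(f)$ is the limit of the subobjects $P_i(f)\subseteq P_i(c)\times P_i(d)$, hence a subobject of $R(c)\times R(d)$; concretely, for products $((x_i)_i,(y_i)_i)\in R(f)$ iff $(x_i,y_i)\in P_i(f)$ for all $i$, and for equalizers one simply restricts this along the equalizing subsets of the sorts, which are stable under the $P_i(f)$ by naturality of the two parallel maps. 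Finally I would specialize to the case where each $P_i=U(Q_i)$ for an ordinary presheaf $Q_i$: then $(x_i,y_i)\in P_i(f)$ is equivalent to $y_i=Q_i(f)(x_i)$, and since limits in $\Psh(\ccat)$ are also pointwise this says exactly that $(y_i)_i=(\lim_i Q_i)(f)\bigl((x_i)_i\bigr)$. Hence $R(f)$ is the graph of a function, so $R=U(\lim_i Q_i)$; this establishes that $U$ preserves products and equalizers, hence all small limits, and the special adjoint functor theorem then yields the left adjoint.

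\textbf{Main obstacle.}
There is no deep difficulty here; the one point requiring (mild) care is the final verification that a pointwise limit of functional relational presheaves is again functional, and restricting attention to products and equalizers keeps that bookkeeping minimal. The genuine pitfall to avoid is trying to construct the left adjoint by hand: the ``presheaf reflection'' of a relational presheaf would require both quotienting each sort (to identify $P(\id_c)$-related elements, and elements that are $f$-related to a common element) \emph{and} freely adjoining fresh elements so that every face relation becomes a total function — exactly the sort of explicit construction that the adjoint functor theorem lets us sidestep.
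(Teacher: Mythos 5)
Your proof is correct and follows essentially the same route as the paper: the adjoint functor theorem for locally presentable categories, accessibility of $U$ from its colimit preservation (already established for the right adjoint), and limit preservation from the fact that limits in $\RelPsh(\ccat)$ are computed pointwise (which the paper simply cites from Niefield rather than verifying on products and equalizers by hand). One small slip worth noting: for general relational presheaves the equalizing subsets are \emph{not} stable under the relations $P(f)$ (morphisms are only oplax and relations need not be functional), but this stability is not actually needed to see that the restriction is the equalizer, and it does hold in the specialized case $P_i=U(Q_i)$ where you genuinely use it.
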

\begin{proof}
	By the special adjoint functor theorem, $U$ has a left adjoint if and only if it is acessible and preserves small limits~\cite[Theorem 1.66]{adamek1994locally}.
	By \cref{prop:radj}, $U$ is accessible.
	By Corollary 3.2 of \cite{niefield2004change}, limits in $\RelPsh(\ccat)$ are computed pointwise, 
	which implies that $U$ preserves them,
	since limits in $\Psh(\ccat)$ are also computed pointwise.
        \qed
\end{proof}

\begin{remark}\label{rem:explicit}
In fact, we can give an explicit construction of the left adjoint.
Let $P$ be a relational presheaf.
First set \[P_0(c):=\{(x,f)\,|\, f:c\to \cod(f),\, x\in P(\cod(f))\}\] for all $c$,
\begin{align*}
		P_0(f) := & \{((x,\id_c),(y,\id_d))\,|\, (x,y)\in P(f)\}\\
		            & \cup \{((x,g),(x,g\circ f))\,|\, g:c\to d,\, x\in P(d)\}
\end{align*}
for all $f:b\to c$. 
In the example of relational precubical sets, $P_0$ is the result of freely adjoining faces to the existing cubes.
Now let $P_1$ be the closure of $P_0$ under composition.
Finally, let $P_2$ be defined in the following way: $P_2(c)$ is the quotient of $P_1(c)$ by the equivalence relation generated by: 
$x\sim y$ if and only if there is $f:c\to d$ and $z\in P_1(d)$ such that $z\to _f x$ and $z\to _f y$.
Two equivalence classes blong to a relation if there are representatives of each which belong to this relation.
It is not hard to check that this indeed is a relational presheaf whose relations are functional,
and that it defines a left adjoint to $U$ 
(essentially because free constructions, quotients and closure under composition are all left adjoints).
Let us just explain why we do not need to close by composition again.
Suppose given $[x]\to_f [y]$ and $[y]\to_g [z]$ in $P_2$.
We can suppose that $x\to_f y'$, $y\to z$ and there is $e$ and $h$ such that $e\to_h y$ and $e\to_h y'$.
Note that by construction, there is $z'$ such that $y'\to_g z'$. 
But since $P_1$ is already closed under composition, we have $e\to_{g\circ h} z$ and $e\to_{g\circ h} z'$,
or in other words $z'\in [z]$.
Now $x\to_{g\circ f}z'$, so $[x]\to_{g\circ f}[z]$.
\end{remark}
For example, applying respectively the left and right adjoints on the central relational graph below gives the left and right graphs respectively.
\[
  \begin{tikzcd}[row sep=0pt]
    &&\overset{z}\cdot\\
    \overset x\cdot\ar[r,"a"]&\overset y\cdot\ar[ur,"b_1"]\ar[dr,"b_2"']\\
    &&\overset{z'}\cdot
  \end{tikzcd}
  \qquad\quad
  \begin{tikzcd}[row sep=0pt]
    &&\overset{z}\cdot\\
    \overset x\cdot\ar[r,"a"]&\overset{y_1}{\underset{y_2}:}\ar[ur,"b_1"]\ar[dr,"b_2"']\\
    && {}
  \end{tikzcd}
  \qquad\quad
  \begin{tikzcd}[row sep=0pt]
    & {\overset {y_1}{\cdot} } & \overset z\cdot \\
    {\overset x\cdot} \\
    & {\underset {y_1}{\cdot} }
    \arrow["{b_1}", from=1-2, to=1-3]
    \arrow["{a_1}", from=2-1, to=1-2]
    \arrow["{a_2}"', from=2-1, to=3-2]
  \end{tikzcd}
\]

We can also compare these categories to the category of partial presheaves. 
The inclusion $U:\Psh(\ccat)\to \RelPsh(\ccat)$ factors through the category of partial presheaves $\ParPsh(\ccat)$,
so the inclusion $\Psh(\ccat)\to \ParPsh(\ccat)$ has right and left adjoints,
which are the restrictions of the ones defined above.
We can also prove that the inclusion $\ParPsh(\ccat)\to \RelPsh(\ccat)$ has a left adjoint.
Indeed, it is accessible by functoriality of the point construction (see~\cite[\S 2.1]{henry2019abstract}), 
and it is easy to see that it preserves limits, 
i.e.~that the additional axiom is still valid when we take the limit in relational presheaves,
because this axiom can itself be expressed using limits,
and limits are computed pointwise.
Concretely, it is similar to the left adjoint defined in Remark~\ref{rem:explicit}, without the first step where we freely add faces.
However, it does not have a right adjoint, since it does not preserve pushouts (e.g. the pushout square of \cite[\S 4.1]{dubut2019trees}).
This means that a realization of relational presheaves does not induce a realization of partial presheaves.

\section{Topology proofs}\label{sec:top}
\begin{proposition}
  Let $X$ be the pushout in the following cocartesian square:
  \vspace{2ex}
  \begin{center}
    \begin{tikzpicture}
    \draw[thick] (0.5,0) -- (0.5,1);

    \begin{scope}[xshift=15mm]
      \draw (0.5,0.5) node{$\xrightarrow{}$};
    \end{scope}

    \begin{scope}[xshift=30mm]
      \draw[thick] (0.5,0) -- (0.5,1);
      \filldraw (0.5,0) circle (.025);
    \end{scope}

    \begin{scope}[yshift=-15mm]
      \draw (0.5,0.5) node{$\downarrow$};
    \end{scope}

    \begin{scope}[yshift=-30mm]
      \fill[color=mygray] (0,0) rectangle (1,1);
      \draw[thick] (0,0) -- (0,1);
    \end{scope}

    \begin{scope}[xshift=15mm, yshift=-30mm]
      \draw (0.5,0.5) node{$\xrightarrow{}$};
      \draw (0.5,0.2) node{\small $\iota_1$};
    \end{scope}

    \begin{scope}[xshift=30mm, yshift=-15mm]
      \draw (0.5,0.5) node{$\downarrow$};
      \draw (0.8,0.5) node{\small $\iota_2$};
    \end{scope}

    \begin{scope}[xshift=30mm, yshift=-30mm]
      \fill[color=mygray] (0,0) rectangle (1,1);
      \draw[thick] (0,0) -- (0,1);
      \filldraw (0,0) circle (.025);
      \draw (-0.2,-0.1) node{$s$};
      \draw (0.5,0.5) node{$c$};
    \end{scope}
  \end{tikzpicture}
  \end{center}
  (where $c$ is the open square).
  For any connected compact space $K$, for any continuous map $f:K\to X$ such that $f(K)\subseteq c\cup\{s\}$,
  either $f$ is the contant map at $s$, or $f(K)\subseteq c$.
\end{proposition}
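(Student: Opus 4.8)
The plan is to describe the topology of the pushout $X$ explicitly and then argue by contradiction. Recall that the square is the pushout of $[0,1[\;\hookleftarrow\;]0,1[\;\xrightarrow{v}\;C$, where $]0,1[$ is the open interval $M(1)$, $C=M(R_{d_0^-})$ is the open square $c$ together with its left edge $L:=\set{0}\times]0,1[$, the left-hand map is the inclusion $]0,1[\hookrightarrow[0,1[$, and $v(t)=(0,t)$ identifies $]0,1[$ with $L$; the endpoint $0\in[0,1[$ then becomes the extra point $s$. Using the universal property of the pushout I would check that $c$ is open and $\set{s}$ is closed in $X$, and that the open sets of $X$ containing $s$ are exactly the sets $\set{s}\cup V$ with $V$ open in $C$ and $\set{0}\times]0,\delta[\subseteq V$ for some $\delta>0$. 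The essential observation is that $\set{s}$ is \emph{not} open in $Y:=c\cup\set{s}$ (each such $V$ meets $c$), so a clopen-partition argument does not apply directly; however, $s$ is approached from $c$ only by \emph{nets}, never by sequences or paths, and the proof plays the compactness of $K$ against this.

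Next I would produce enough continuous real-valued functions on $Y$ to detect its topology near $s$. For any continuous $\phi\colon]0,1[\to]0,\infty[$, the function $g_\phi\colon Y\to[0,\infty[$ given by $g_\phi(x_1,x_2)=x_1/\phi(x_2)$ on $c$ and $g_\phi(s)=0$ is continuous: on the open subset $c$ this is immediate, and $g_\phi^{-1}([0,\epsilon[)=Y\cap(\set{s}\cup V)$, where $V=\setof{(x_1,x_2)\in C}{x_1<\epsilon\,\phi(x_2)}$ is open in $C$ and contains all of $L$ (as $\epsilon\,\phi(x_2)>0$), hence is an open neighborhood of $s$; the preimages of the sets $]\delta,\infty[$ are open subsets of $c$. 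The same computation shows that $q\colon Y\to[0,1[$, $q(x_1,x_2)=x_2$, $q(s)=0$, is continuous, with $q^{-1}(0)=\set{s}$. Now assume $f$ is not the constant map at $s$ and, for contradiction, that $f(K)\not\subseteq c$; then $s\in f(K)$ and some value $f(k')$ lies in $c$, so $q\circ f$ attains a positive value. Since $K$ is connected, $(q\circ f)(K)$ is an interval containing $0$ and that positive value, so for each $n\geq1$ there is $k_n\in K$ with $q(f(k_n))\in]0,1/n[$; then $f(k_n)=(a_n,b_n)\in c$ with $b_n\to 0$. After passing to a subsequence I may assume $(b_n)_n$ strictly decreasing, and then choose a continuous positive $\phi$ with $\phi(b_n)=a_n$ for every $n$ (the piecewise-affine function through the points $(b_n,a_n)$, extended by a constant). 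By construction $g_\phi\circ f$ is continuous and equals $1$ at every $k_n$.

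To conclude I would use compactness of $K$: the sequence $(k_n)_n$ has a cluster point $k_\infty$, \ie a subnet $(k_{n_\mu})_\mu$ converging to $k_\infty$; since $q(f(k_{n_\mu}))=b_{n_\mu}\to 0$ we get $q(f(k_\infty))=0$, hence $f(k_\infty)=s$ and $g_\phi(f(k_\infty))=0$, while $g_\phi(f(k_{n_\mu}))=1$ for every $\mu$, contradicting the continuity of $g_\phi\circ f$. Therefore $f(K)\subseteq c$. The routine-but-delicate parts are the explicit description of the pushout topology and the verifications that $q$ and the $g_\phi$ are continuous; the conceptual crux is that the ``funnel'' shape of the neighborhoods of $s$ lets one tailor $\phi$ to \emph{any} prescribed null sequence $(b_n)$, and that passing to a cluster point is precisely what lifts the easy statement ``no continuous path joins $s$ to $c$'' to arbitrary connected compact domains.
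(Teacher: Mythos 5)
Your proof is correct, but it follows a genuinely different route from the paper's. The paper's argument is a short clopen-partition argument: it shows directly that $f^{-1}(\{s\})$ is open in $K$, by observing that it equals $f^{-1}(Z)$ for $Z=X\setminus f(K\setminus f^{-1}(\{s\}))$, and that $Z$ is open because $\iota_2^{-1}(Z)$ is everything (the non-$s$ values of $f$ land in $c$) while $\iota_1^{-1}(Z)=\iota_1^{-1}(X\setminus f(K))$ is the complement of a compact, hence closed, set; connectedness of $K=f^{-1}(\{s\})\sqcup f^{-1}(c)$ then finishes. You instead build explicit continuous real-valued functions on $c\cup\{s\}$ --- the height $q$ and the tailored ``funnel'' functions $g_\phi$ --- use connectedness via the intermediate value theorem to manufacture a sequence $(a_n,b_n)\in c$ with $b_n\to 0$ in the image, choose $\phi$ interpolating $(b_n,a_n)$ so that $g_\phi\circ f$ is constantly $1$ on the $k_n$, and use compactness to extract a cluster point where $g_\phi\circ f$ must vanish. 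Both uses of compactness and connectedness check out, and your verification that $g_\phi$ is continuous at $s$ (the set $\{(x_1,x_2):x_1<\epsilon\,\phi(x_2)\}$ is open in $C$ and contains the whole left edge) is the correct key computation. What each approach buys: the paper's proof is much shorter, but it silently relies on $X$ being Hausdorff (so that the compact set $f(K)$ is closed), a fact that is true here but left unverified; your construction of explicit separating functions avoids that point entirely and makes the ``no path reaches $s$ from $c$'' phenomenon very concrete, at the cost of a longer argument involving subnets. Either proof would be acceptable.
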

\begin{proof}
  The first step is to prove that $f^{-1}(\{s\})=f^{-1}(X\setminus f(K\setminus f^{-1}(\{s\})))$.
  \begin{align*}
    x\in f^{-1}(X\setminus f(K\setminus f^{-1}(\{s\}))) & \implies f(x) \not\in f(K\setminus f^{-1}(\{s\}))\\
                                                       & \implies x\not\in K\setminus f^{-1}(\{s\})\\
                                                       & \implies x\in f^{-1}(\{s\})
  \end{align*}
  Conversely, just notice that $s\not\in f(K\setminus f^{-1}(\{s\}))$. 
  Now we prove that $f^{-1}(\{s\})$ is open.
  By the first step, it is enough to prove that $Z:=X\setminus f(K\setminus f^{-1}(\{s\}))$ is open.
  The topology on $X$ is the pushout topology, so we just need to prove that 
  $\iota_i^{-1}(Z)$ is open for $i=1,\,2$. But $f(K\setminus f^{-1}(\{s\}))\subseteq c$ so $\iota_2^{-1}(Z)$ is the whole space, thus open, and 
  \[\iota_1^{-1}(Z)=\iota_1^{-1}(X\setminus f(K))\]
  which is open because $f(K)$ is closed, because $K$ is compact and $f$ is continuous so it sends compact spaces to compact spaces, and compact spaces are closed.
  This is enough to conclude, since now $K$ is covered by two disjoint open sets $f^{-1}\{s\}$ and $f^{-1}(c)$, and $K$ is connected so one of them is empty.
        \qed
\end{proof}

\begin{corollary}\label{cor:ht}
  The subspace $c\cup\{s\}\subseteq X$ of the previous proposition has the same homotopy type as the coproduct space $c\sqcup \{s\}$.
\end{corollary}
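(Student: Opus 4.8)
The plan is to prove the corollary by exhibiting a weak homotopy equivalence between the two spaces; since $c\cup\{s\}$ fails to be locally path-connected at $s$, this is the appropriate sense in which the two spaces \emph{have the same homotopy type}, and it is exactly the information that the preceding proposition is designed to extract, its hypothesis that $K$ be connected and compact being precisely what one needs in order to probe the spaces with spheres $S^n$ and the interval. The comparison map is the canonical continuous bijection $j\colon c\sqcup\{s\}\to c\cup\{s\}$ which is the identity on underlying points; it is continuous because the coproduct topology on the source is finer than the subspace topology that $c\cup\{s\}$ inherits from $X$. I would show that $j$ induces a bijection on path components and an isomorphism on all higher homotopy groups at every basepoint, which is what identifies the homotopy types.

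First I would compute $\pi_0$. Applying the previous proposition with $K=[0,1]$, every path in $c\cup\{s\}$ is either constant at $s$ or has image contained in $c$; hence the path components of $c\cup\{s\}$ are exactly $c$ and $\{s\}$, which $j$ matches bijectively with the two path components of $c\sqcup\{s\}$. Next I would treat the higher groups. For $n\ge 1$ the sphere $S^n$ is connected and compact, so by the proposition any continuous map $S^n\to c\cup\{s\}$ is either constant at $s$ or lands in $c$: a based map at a point $p_0\in c$ must land in $c$, since its image is connected and contains $p_0\neq s$, and a based map at $s$ must be constant, since its image is connected and contains $s\notin c$. As $c$ is a convex open subset of $\mathbb{R}^2$, hence based-contractible, every such map is null-homotopic inside $c\cup\{s\}$, so $\pi_n(c\cup\{s\})=0$ at either basepoint. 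The same groups vanish for $c\sqcup\{s\}$ and $j$ induces the evident isomorphisms, so $j$ is a weak homotopy equivalence and the two spaces have the same homotopy type.

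The main obstacle is the higher-homotopy computation in the second step, and it is exactly here that the full strength of the preceding proposition is needed: the content is not merely that there is no \emph{path} joining $c$ to $s$, but that no sphere can straddle the two pieces, which is what forces every homotopy group to vanish. Everything else is formal, the comparison map being a continuous bijection by inspection of the two topologies and the matching of path components being immediate from the proposition. Since $s$ lies in the closure of $c$, the identification of homotopy types is realized by the weak homotopy equivalence $j$ rather than by a homeomorphism, which is the expected state of affairs given that $c\cup\{s\}$ is not locally path-connected at $s$; this suffices for the intended application, namely witnessing that the naive realization of $R_{d_1^-d_0^-}$ degenerates, homotopically, to $M(0)\sqcup M(1)$.
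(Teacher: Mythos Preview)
Your proof is correct and follows essentially the same approach as the paper: both arguments use the preceding proposition applied to compact connected test spaces (you use $[0,1]$ and $S^n$, the paper invokes cubes $[0,1]^n$) to conclude that all maps from such spaces factor through one of the two pieces, whence the canonical bijection is a weak homotopy equivalence. Your version is considerably more explicit---you spell out the comparison map, the $\pi_0$ computation, and the vanishing of higher homotopy groups via contractibility of $c$---whereas the paper compresses this into a single sentence, but the underlying idea is identical.
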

\begin{proof}
  Homotopy types are entirely determined by continuous functions whose domain is $[0,1]^n$ for every natural number $n$, which are compact connected spaces.
        \qed
\end{proof}

%\fill[color=mygray] (0,0) rectangle (1,1);
%  \filldraw (0,0) circle (.025);
%
% \begin{scope}[xshift=15mm]
%  \draw (0.5,0.5) node{$\hookrightarrow$};
%  \end{scope}

%  \begin{scope}[xshift=30mm]
%  \fill[color=mygray] (0,0) rectangle (1,1);
%  \draw[thick] (0,0) -- (0,1);
%  \filldraw (0,0) circle (.025);
%  \draw (-0.2,-0.1) node{$s$};
%  \end{scope}
Now we want to prove Proposition~\ref{prop:induced}. We have to go through a concrete description of the geometric realization, at leat for a subclass of relational precubical sets.

\begin{definition}
	Let $P$ be a relational presheaf, $c$ a cube of $P$.
	The \emph{combinatorial positive neighborhood} of $c$ is given by
	\[N^+(c):=\{(a,f)\,|\,a\to _f c\}\]
\end{definition}
Recall that a morphism $f:n\to n+k$ in~$\square$ decomposes uniquely as
\[
  f=d^{\epsilon_k}_{n+k-1,i_k}\circ\ldots \circ d^{\epsilon_2}_{n+1,i_2}\circ d^{\epsilon_1}_{n,i_1}
\]
with $0\leq i_1<i_2<\ldots<i_k<n+k$, so it can be seen as a word $w\in\{-1,0,1\}^{n+k}$ with exactly $k$ non-zero components, 
where $w_{i_j}=\epsilon_j$ and $w_i=0$ for the other indices (where $w_i$ denotes the $i$th letter of $w$).
We use this notation for the remaining of this section.
Using Proposition~\ref{prop:kan-pre}, we see that the underlying set of the geometric realization of a relational precubical set $P$ is $\bigsqcup_{n\in\N}P(n)\times]0,1[^n$.

\begin{lemma}\label{lem:basis}
	Let $P$ be a relational precubical set, $c$ an $n$-cube of $P$, $n\in\N$,
	$x$ a point of $|P|_\Top$ belonging to $c$, i.e.~to $\{c\}\times\mathopen]0,1[^n$. 
	Suppose that $N^+(c)$ is finite.
	Then a basis of neighborhoods of $x$ is given by:
	\[U_k(x):=\bigcup_{(a,w)\in N^+(c)}\{a\}\times\prod_{1\leq i \leq \dim(a)}I_{k,w_i,i}(x)\]
	where we define $I_{k,-1,i}(x):=\mathopen]0,2/k[$, $I_{k,0,i}(x):=\mathopen]x_{p(i)}-1/k,x_{p(i)}+1/k[$ and $I_{k,1,i}(x):=\mathopen]1-2/k,1[$
	with $p(i):=|\{j\in\llbracket 1,i\rrbracket\,|\,w_j=0\}|$, for every $i\in\llbracket 1,\dim(a)\rrbracket$ and every big enough $k>0$.
\end{lemma}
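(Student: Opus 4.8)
The plan is to read off the open sets of $|P|_\Top$ from its colimit presentation and then verify directly that the $U_k(x)$ are open and cofinal among neighbourhoods of $x$. First I would make the topology explicit. By \cref{prop:kan-pre}, $|P|_\Top$ carries the final topology for the coprojections out of the cells $M(n)=\mathopen]0,1[^n$ (one for each $n$-cube $a$ of $P$) and $M(R_f)$ (one for each relation instance $a\to_f b$ of $P$); the first family gives the identification of the underlying set with $\bigsqcup_a\{a\}\times\mathopen]0,1[^{\dim a}$, and a cell $M(R_f)$, for $f$ a face map with word $w$ as recalled above, is the subspace of a closed cube described in \cref{seq-real}, whose coprojection sends the interior homeomorphically onto $\{a\}\times\mathopen]0,1[^{\dim a}$ and the distinguished face onto $\{b\}\times\mathopen]0,1[^{\dim b}$. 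Pushing a basic neighbourhood of a boundary point of $M(R_f)$ through the coprojection yields the criterion: $V\subseteq|P|_\Top$ is open iff, for every cube $b$, every $y$ with $(b;y)\in V$, and every $(a,w)\in N^+(b)$, there is $\varepsilon>0$ with
\[
  \{a\}\times\prod_{i}J_i\subseteq V,\qquad
  J_i=\begin{cases}\mathopen]y_{p(i)}-\varepsilon,y_{p(i)}+\varepsilon[&\text{if }w_i=0,\\\mathopen]0,\varepsilon[&\text{if }w_i=-1,\\\mathopen]1-\varepsilon,1[&\text{if }w_i=1.\end{cases}
\]
The instance $(b,\id)\in N^+(b)$ is just openness of $V$ inside the stratum of $b$, and $U_k(x)$ is exactly the union of these ``flaps'' of width $\approx 1/k$ around $x$ over $(a,w)\in N^+(c)$.

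Next I would show that each $U_k(x)$ is open, provided $k$ is large enough that every $\mathopen]x_j-1/k,x_j+1/k[$ lies in $\mathopen]0,1[$. Given $(b;y)\in U_k(x)$, say via $(b,w)\in N^+(c)$, and a coface $(a',v)\in N^+(b)$, composing the face maps $c\hookrightarrow b\hookrightarrow a'$ in $\square$ produces $(a',u)\in N^+(c)$, so $U_k(x)\supseteq\{a'\}\times\prod_i I_{k,u_i,i}(x)$; one then checks that a flap of $a'$ around $y$ of width $\varepsilon$ sits inside this set as soon as $\varepsilon\le 2/k$ and $\varepsilon$ is below each of the finitely many positive quantities $1/k-|y_j-x_{p(j)}|$, $y_j$, $2/k-y_j$ (and their $+$-sign analogues) attached to the $\dim b$ coordinates of $y$. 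Since none of these bounds involves $a'$, a single $\varepsilon>0$ works simultaneously for all cofaces of $b$, so $U_k(x)$ is open. Carrying this out — bookkeeping how the directions of $c$, of an intermediate cube $b$, and of a coface $a'$ are nested via the word calculus of $\square$ (which positions of $u$ are carried by $v$, which by $w$, and how $p$ changes under composition), and checking that the flaps nest accordingly — is the technical heart of the argument, and the step I expect to be the main obstacle.

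Finally, for cofinality, suppose $V$ is open with $(c;x)\in V$. The criterion at $(c;x)$ provides, for each $(a,w)\in N^+(c)$, a width $\varepsilon_{a,w}>0$ whose flap is contained in $V$. Because $N^+(c)$ is \emph{finite}, $\varepsilon:=\min_{(a,w)\in N^+(c)}\varepsilon_{a,w}$ is still positive, and then $U_k(x)\subseteq V$ for any $k$ with $1/k\le\varepsilon$ and $2/k\le\varepsilon$. This is the only point where the finiteness hypothesis is used: without it the widths $\varepsilon_{a,w}$ may tend to $0$ (e.g.\ for a vertex lying on infinitely many squares), and no single $U_k(x)$ need fit inside $V$. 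Combined with the previous paragraph, this shows the $U_k(x)$ for large $k$ form a neighbourhood basis of $x$.
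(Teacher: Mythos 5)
Your proposal is correct and follows essentially the same route as the paper: the paper's proof likewise reads off the final (colimit) topology from the Kan-extension description of the realization, reducing openness to openness in each stratum and in each two-stratum piece $\{a\}\times\mathopen]0,1[^{\dim a}\cup\{a'\}\times\mathopen]0,1[^{\dim a'}$, and then dismisses the remaining verification as topological routine using finiteness of $N^+(c)$ to choose $k$. You simply spell out the "routine" part (the openness of $U_k(x)$ via composition of face words and the cofinality via $\min$ over the finite set $N^+(c)$) in more detail than the paper does.
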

\begin{proof}
	From the definition of the realization as an inclusion followed by a Kan extension (\cref{prop:kan-pre}), 
	we immediately get that an open set of $|P|_\Top$ is a subset which is open in every $\{a\}\times]0,1[^{\dim(a)}$ and every
	\[\{a\}\times\mathopen]0,1\mathclose[^{\dim(a)}\cup\{a'\}\times\mathopen]0,1\mathclose[^{\dim(a')}\subseteq [0,1]^{\dim(a)}\] 
	with the induced topology, for $a\to _w a'$ (for any $a$, $a'$, $w$), where the inclusion sends $\{a'\}\times\oint{0,1}^{\dim(a')}$ to the corresponding face with respect to $w$ (see the beginning of section \ref{seq-real}).
	The remaining of the proof is topological routine, 
	using the finiteness of $N^+(c)$ to find a big enough $k$.
        \qed
\end{proof}
For example, in the closed $2$-cube, we represent an open neighborhood (in dark gray) for the points $x,\, y$ and $z$ respectively:
\[
  \begin{tikzpicture}[baseline=(b.base)]
      \coordinate (b) at (.5,.5);
      \fill[mygray] (0,0) rectangle (2,2);
      \fill[gray] (0.75,0) rectangle (1.25,0.5);
      \filldraw (0,0) circle (0.05);
      \filldraw (2,0) circle (0.05);
      \filldraw (0,2) circle (0.05);
      \filldraw (2,2) circle (0.05);
      \filldraw (1,0) circle (0.05) node[below]{$x$};
      \draw
      (0,0) edge[below,thick] (2,0)
      (2,0) edge[below,thick] (2,2)
      (0,0) edge[below,thick] (0,2)
      (0,2) edge[below,thick] (2,2)
      ;
    \end{tikzpicture}
  \qquad\qquad
  \begin{tikzpicture}[baseline=(b.base)]
      \coordinate (b) at (.5,.5);
      \fill[mygray] (0,0) rectangle (2,2);
      \fill[gray] (1.5,0) rectangle (2,0.5);
      \filldraw (0,0) circle (0.05);
      \filldraw (2,0) circle (0.05) node[below right]{$y$};
      \filldraw (0,2) circle (0.05);
      \filldraw (2,2) circle (0.05);
      \draw
      (0,0) edge[below,thick] (2,0)
      (2,0) edge[below,thick] (2,2)
      (0,0) edge[below,thick] (0,2)
      (0,2) edge[below,thick] (2,2)
      ;
    \end{tikzpicture}
    \qquad\quad
  \begin{tikzpicture}[baseline=(b.base)]
      \coordinate (b) at (.5,.5);
      \fill[mygray] (0,0) rectangle (2,2);
      \fill[gray] (0.75,0.75) rectangle (1.25,1.25);
      \filldraw (0,0) circle (0.05);
      \filldraw (2,0) circle (0.05);
      \filldraw (0,2) circle (0.05);
      \filldraw (2,2) circle (0.05);
      \filldraw (1,1) circle (0.03) node[below right]{$z$};
      \draw
      (0,0) edge[below,thick] (2,0)
      (2,0) edge[below,thick] (2,2)
      (0,0) edge[below,thick] (0,2)
      (0,2) edge[below,thick] (2,2)
      ;
    \end{tikzpicture}
\]

\noindent
As a corollary, we finally have

\inducedrestate*
\begin{proof}
  This is a cocontinuous functor, so we just need to show that it coincides with the geometric realization on $\square$.
  But this is straightforward from \cref{lem:basis}.
  \qed
\end{proof}

\begin{remark}
  In fact we can prove that this geometric realization coincides with~\cite{dubut2019trees} on partial presheaves which are ``sufficiently connected''.
  More generally, we can show that the geometric realization of relational precubical families preserves equalizers.
  Indeed, recall that a monomorphism is \emph{regular} if it is the equalizer of some pair of morphisms.
  So in $\RelFam(\square)$, if a partial precubical set $P$ is a regular subobject of its ``completion'' $P'$ as defined in~\cite{dubut2019trees} (which is just the result of applying the left adjoint of \cref{sec:presheaves}),
  then this regular monomorphism $P\hookrightarrow P'$ is sent to a regular monomorphism in $\Top$,
  in other words $|P|_\Top$ is a subspace of $|P'|_\Top$ (which is, as we just saw, the standard geometric realization of the precubical set~$P'$),
  which is exactly how the geometric realization is defined in~\cite{dubut2019trees}.
  Now the proof of the fact that $|{-}|_\Top$ preserves equalizers again just follows from the concrete description of the realization given by \cref{prop:kan-pre}.
  Note that $|{-}|_\Top$ does not send all monomorphisms to subspace inclusions, for example just consider the monomorphism:
  \[ \begin{tikzpicture}[baseline={([yshift=-.5ex]current bounding box.center)}]
    \filldraw (0,0) circle (.05);
  \end{tikzpicture}
  \sqcup
  \begin{tikzpicture}[baseline={([yshift=-.5ex]current bounding box.center)}]
    \draw[thick] (0,0) -- (.5,0);
  \end{tikzpicture}
  \quad\hookrightarrow\quad
  \begin{tikzpicture}[baseline={([yshift=-.5ex]current bounding box.center)}]
    \filldraw (0,0) circle (.05);
    \draw[thick] (0,0) -- (.5,0);
  \end{tikzpicture}
  \]
  The key property of regular monomorphisms $i:P\hookrightarrow P'$ that is implicitely used here is that if $x\to_f y$ in $P'$ and $x$ and $y$ are in $P$ then $x\to_f y$ in $P$;
  in other words, $i$ is an embedding, in the model-theoretic sense.
\end{remark}

\section{Combinatorial blowup}
\label{sec:comb-bup}
There is a classical equivalence between the category~$\Psh(\ccat)$ of presheaves over a category~$\ccat$ and discrete fibrations, see \cite[Theorem 2.1.2]{loregian2020categorical} for instance. This generalizes to the relational setting as follows.

\begin{definition}
  Let $P$ be a relational presheaf over a category $\ccat$.
  A morphism $\alpha:\tilde{P}\to P$ over $P$ is a \emph{discrete fibration} if for every morphism $f:d\to c$ of~$\ccat$, 
  for every $x\to _f y$ of $P$,
  for every $y'\in\alpha_d^{-1}(y)$,
  there is a unique $x'\in\alpha_c^{-1}(x)$ such that $x'\to _f y'$ in $\tilde{P}$.
  The full subcategory of $\RelPsh(\ccat)/P$ whose objects are the fibrations over $P$ is noted $\DFib(P)$.
\end{definition}

\begin{definition}
  Let $P$ be a relational presheaf over a category $\ccat$. 
  The \emph{category of elements} of $P$, noted $\int P$, is defined as the category whose
  \begin{itemize}
  \item objects are the couples $(c,x)$ with $c$ an object of $\ccat$ and $x \in P(c)$,
  \item morphisms $(d,y)\to (c,x)$ are the morphisms $f:d\to c$ such that $x\to _f y$.
  \end{itemize}
\end{definition}
\vspace{-2ex}%
\dfibrestate*
\begin{proof}
  First, we define the map $\psi:\DFib(P)\to \Psh((\int P)^\op)$. Given a fibration $\alpha:\tilde{P}\to P$, we define
  \[\psi(\alpha)(c,x):=\alpha_c^{-1}(x)\]
  for any object $(c,x)$ of $\int P$. Now given a morphism $f:(d,y)\to (c,x)$, 
  for an element $z\in \psi(\alpha)(d,y)$, there is a unique $z'\in\psi(\alpha)(c,x)$ such that $z'\to _f z$.
  We then set $\psi(\alpha)(w)(z)=z'$.
  This indeed defines a presheaf by the uniqueness of the lift.

  In the other direction, we define $\phi:\Psh((\int P)^\op)\to \DFib(P)$. 
  Take a presheaf $F$.
  We start by defining a relational presheaf $\tilde{P}$, in the following way:
  \begin{align*}
    \tilde{P}(c) & = \bigsqcup_{(c,x)\in\int P}F(c,x)\\
    \tilde{P}(f) & = \{(a,b)\,|\,\exists x,y,\, a\in F(c,x),\, b\in F(d,y),\, x\to _f y,\,F(w)(b)=a\}
  \end{align*}
  for every $c\in\ccat$, $f:d\to c$. 
  Now we can define $\phi(F):\tilde{P}\to P$ to be the projection $a\in F(c,x)\mapsto x$.
  It is a morphism of relational presheaves by construction, and checking that it is a fibration is a simple calculation.
	
  To finish the proof, we need to check that $\phi$ and $\psi$ are inverse to each other. 
  Again, this is a simple calculation.
  \qed
\end{proof}

Now we explain how to realize a presheaf $F\in \Psh((\int P)^\op)$.
Let $P$ be a relational precubical set.
As discussed in the proof of Lemma~\ref{lem:basis}, the underlying set of~$|P|_\Top$ is $\bigsqcup_{n\in\N}P(n)\times\oint{0,1}^n$,
and that an open set of~$|P|_\Top$ is a subset which is open in every $\set{a}\times\oint{0,1}^{\dim(e)}$ and every
\[\{a\}\times\oint{0,1}^{\dim(a)}\cup\{a'\}\times\oint{0,1}^{\dim(a')}\subseteq [0,1]^{\dim(a)}\] 
with the induced topology, for $a\to _f a'$ (for any $a$, $a'$, $f$).
We say that such an open set \emph{contains} a cube $c$ of $P$ if $\set{c}\times\oint{0,1}^{\dim(n)}\cap U\ne\varnothing$.
From this we deduce that there is a basis $\mathcal{B}$ of open sets of~$|P|_\Top$ such that every $U\in\mathcal{B}$ contains exactly one cube of minimal dimension.
We note $\min(U)$ the minimal cube of such $U$.
Now we can transform $F$ into a presheaf $F'$ over the poset category $\mathcal{B}$ by setting
\[F'(U):=F(\dim(\min(U)),\min(U))\]
For the action on morphisms, 
we have that if $U,\, V\in\mathcal{B}$ and $U\subseteq V$, 
then either $\min(V)\to_f\min(U)$ for some $f$, or $\min(U)=\min(V)$.
We can then define $F'(U\subseteq V):=F(f)$ in the first case, 
$F'(U\subseteq V):=\id$ in the second.
Now we can extend $F'$ to a presheaf on $|P|_\Top$ without changing its germs,
by right Kan extension along the inclusion $\mathcal{B}\to O(|P|_\Top)$ the poset of open sets of $|P|_\Top$,
\ie we get a presheaf 
\[F'':U\mapsto \lim_{\mathcal{B}\ni B\subseteq U}F'(U)\]
with obvious action on morphisms.
Finally, using the correspondance between sheaves on a space and local homeomorphisms over this space~\cite[\S II.6]{maclane2012sheaves},
we get a space $|F|$ corresponding to $F''$.
All this was already done in~\cite{chamoun2025non} to prove that the combinatorial blowup is indeed equivalent to the topological blowup.

\begin{definition}
  Let $P$ be a relational precubical set and $F\in \Psh((\int P)^\op)$.
  The geometric realization $|F|$ of $F$ over $|P|_\Top$ is the space defined by the above procedure.
\end{definition}
\begin{lemma}
  \label{lem:fib}
  Let $\alpha:P'\to P$ be a discrete fibration. 
  For every cube $c'$ of $P'$, there is a canonical bijection $N^+(c')\cong N^+(\alpha(c))$ induced by $\alpha$.
\end{lemma}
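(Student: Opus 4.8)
The plan is to unwind the definitions on both sides and use the unique right lifting property of the fibration $\alpha$ to transport and reflect elements of the positive neighborhoods. Recall that for a cube $c'$ of $P'$ we have $N^+(c') = \{(a',f)\mid a'\to_f c'\}$, and similarly $N^+(\alpha(c')) = \{(a,f)\mid a\to_f \alpha(c')\}$. I would define the candidate map $N^+(c')\to N^+(\alpha(c'))$ by sending $(a',f)$ to $(\alpha(a'),f)$; this is well-defined because $\alpha$ is a morphism of relational presheaves, so $a'\to_f c'$ implies $\alpha(a')\to_f\alpha(c')$. The content of the lemma is that this map is a bijection, and both injectivity and surjectivity come directly from the discrete fibration property: given $(a,f)\in N^+(\alpha(c'))$, i.e.\ $a\to_f\alpha(c')$ in $P$, and noting that $c'\in\alpha^{-1}(\alpha(c'))$, there is a \emph{unique} $a'\in\alpha^{-1}(a)$ with $a'\to_f c'$ in $P'$; this $a'$ gives the unique preimage of $(a,f)$, establishing both that the map is surjective (existence of $a'$) and injective (uniqueness of $a'$, together with the fact that the second component $f$ is preserved on the nose).

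First I would spell out that the assignment $(a',f)\mapsto(\alpha(a'),f)$ lands in $N^+(\alpha(c'))$ and is natural in the evident sense. Then, for surjectivity, I would fix $(a,f)$ with $a\to_f\alpha(c')$ and apply the defining property of a discrete fibration with $y:=\alpha(c')$, $y':=c'$, obtaining $a'$ with $\alpha(a')=a$ and $a'\to_f c'$, so $(a',f)\in N^+(c')$ maps to $(a,f)$. For injectivity, suppose $(a_1',f)$ and $(a_2',f)$ both map to $(a,f)$: then $a_1',a_2'\in\alpha^{-1}(a)$ and both satisfy $\bullet\to_f c'$, so by uniqueness in the fibration property $a_1'=a_2'$. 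The case where the two elements have \emph{different} second components cannot arise since the map is the identity on second components, so pairs with distinct $f$'s are automatically distinguished. I would conclude that the map is a bijection and note it is "canonical" in that it is induced purely by $\alpha$ and the lifting property, hence compatible with inclusions of neighborhoods and with further maps of fibrations.

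I do not expect a serious obstacle here: the statement is essentially a repackaging of the unique right lifting property along a single morphism $f$, applied fiberwise. The only mild subtlety worth a sentence in the writeup is that $N^+$ records \emph{pairs} $(a,f)$ rather than just the source cubes $a$, so one should be careful that the bijection is on pairs — but since $f$ is untouched, this is immediate, and in fact the statement is really a disjoint union over $f$ of bijections $\alpha^{-1}$ restricted to the relevant fibers. If one wanted, one could phrase it slightly more abstractly: $N^+(c')$ is the set of morphisms into $(\dim c',c')$ in $\int P'$, and $\alpha$ induces an isomorphism between this hom-set and the hom-set into $(\dim\alpha(c'),\alpha(c'))$ in $\int P$, which is exactly the discrete-fibration condition expressed via the category of elements; this is the same bijection underlying Theorem~\ref{thm:psh_to_fib}.
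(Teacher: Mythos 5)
Your proof is correct and matches the paper's argument, which simply states that the lemma holds ``by definition of a discrete fibration''; your write-up is the fully spelled-out version of exactly that observation, with the map $(a',f)\mapsto(\alpha(a'),f)$ and the unique-lifting property giving injectivity and surjectivity. (You also correctly read $N^+(\alpha(c))$ in the statement as $N^+(\alpha(c'))$.)
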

\begin{proof}
  By definition of a  discrete fibration.
  \qed
\end{proof}

\begin{lemma}
  \label{lem:fib-etale}
  Let $\alpha:P'\to P$ be a discrete fibration. Then $|\alpha|_\Top$ is a local homeomorphism.
\end{lemma}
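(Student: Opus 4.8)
The plan is to check directly that $|\alpha|_\Top$ satisfies the definition of a local homeomorphism, i.e.\ that every point of $|P'|_\Top$ has an open neighborhood which $|\alpha|_\Top$ maps homeomorphically onto an open subset of $|P|_\Top$. Recall that $|{-}|_\Top$ is a functor, so $|\alpha|_\Top$ is at least continuous. Fix a point $x'\in|P'|_\Top$; since the underlying set of $|P'|_\Top$ is $\bigsqcup_m P'(m)\times\mathopen]0,1\mathclose[^m$, the point $x'$ belongs to a unique cube $c'$ of $P'$, of some dimension $n$, and we may write $x'=(c',s)$ with $s\in\mathopen]0,1\mathclose[^n$. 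Put $c:=\alpha_n(c')$, so that $|\alpha|_\Top(x')=(c,s)=:x$, which belongs to the cube $c$ of $P$.

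First I would use \cref{lem:fib}: since $\alpha$ is a discrete fibration and preserves dimensions, it induces a bijection $N^+(c')\cong N^+(c)$ sending $(a',f)$ to $(\alpha(a'),f)$, and this bijection leaves the ``direction'' $f$ untouched. Then, using \cref{lem:basis}, I would take a basic open neighborhood $U':=U_k(x')$ of $x'$ in $|P'|_\Top$ for $k$ large, together with the basic open neighborhood $U:=U_k(x)$ of $x$ in $|P|_\Top$ assembled from exactly the same numerical data (same widths $1/k$, same point $s$). Because $\alpha$ identifies $N^+(c')$ with $N^+(c)$ compatibly with the gluing of the building blocks, $|\alpha|_\Top$ carries $U'$ onto $U$, acting on each indexing box as the identity on the $\mathopen]0,1\mathclose[^{\dim a'}$-coordinates followed by the relabeling sending $a'$ to $\alpha(a')$.

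It then remains to verify that this restriction $|\alpha|_\Top\colon U'\to U$ is a homeomorphism. It is surjective by construction of $U$, and continuous since $|\alpha|_\Top$ is. For injectivity, suppose $(a'_1,t)$ and $(a'_2,t)$ in $U'$ have the same image; then $\alpha(a'_1)=\alpha(a'_2)=:a$, and for $k$ large the three interval types in \cref{lem:basis} are pairwise disjoint, so the coordinates of $t$ determine a single clamping pattern, hence a single coface map $f$ with $a'_1\to_f c'$ and $a'_2\to_f c'$; applying the uniqueness clause in the definition of discrete fibration to $a\to_f c$ and the lift $c'$ of $c$ gives $a'_1=a'_2$. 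Finally, the inverse $U\to U'$ sends $(a,t)$ to $(a',t)$, where $a'$ is the unique coface of $c'$ lying over $a$ along the coface $f$ read off from $t$; on each building block of $U$ this inverse is once more ``identity on coordinates, relabel the cube'', hence a homeomorphism, and these agree on overlaps, so the inverse is continuous. Thus $|\alpha|_\Top$ restricts to a homeomorphism $U'\xrightarrow{\ \sim\ }U$ onto an open set, and letting $x'$ vary finishes the argument.

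The hard part will be the bookkeeping supporting the second and third paragraphs: checking that a basic neighborhood of $x'$ is transported precisely to a basic neighborhood of $x$ and that the restriction is genuinely open, for which the explicit description of the topology on a geometric realization coming from \cref{lem:basis} is indispensable. One caveat is that \cref{lem:basis} is stated under the hypothesis that $N^+(c')$ be finite; if one does not want to assume this, the same strategy still works with a neighborhood whose box widths are allowed to depend on (and shrink with) the coface, the openness check then being routine from the presentation of $|P'|_\Top$ as a colimit of two-cell spaces, since each openness condition there involves only a single pair of cubes.
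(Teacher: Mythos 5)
Your proof is correct and follows essentially the same route as the paper, whose entire argument is that $|\alpha|_\Top$ is a local homeomorphism by \cref{lem:fib} together with the fact that the realization locally only depends on $N^+$ (via the basic neighborhoods of \cref{lem:basis}); you have simply spelled out the bookkeeping the paper leaves implicit, including the finiteness caveat on $N^+$.
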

\begin{proof}
  By \cref{lem:fib} and the fact that the realization locally only depends on~$N^+$.
\end{proof}

\rblowuprestate*
\begin{proof}
  With the notations of \cref{sec:blowup}, 
  for every $x\in|P|_\Top$, there is some cube $c$ of $P$ such that $x\in\{x\}\times\oint{0,1}^{\dim(c)}$.
  The germ of $F'$ at $x$ is given by $F''_x=F(\dim(c),c)$,
  because $\mathcal{B}$ forms a basis of the topology of $|P|_\Top$.
  So we have a set-theoretic bijection over $|P|_\Top$:
  \begin{align*}
      |F| =     & \bigsqcup_{x\in|P|_\Top}F''_x\\
          \cong & \bigsqcup_{(c,t)\in \bigsqcup_{n\in\N}P(n)\times\oint{0,1}^n}F(\dim(c),c)\\
          \cong & \bigsqcup_{c\in \bigsqcup_{n\in\N}P(n)}F(\dim(c),c)\times\oint{0,1}^{\dim(c)}\\
          =     & \bigsqcup_{c\in \bigsqcup_{n\in\N}P(n)}(\phi(F))^{-1}(c)\times\oint{0,1}^{\dim(c)}\\
          \cong & \bigsqcup_{n\in\N} \left(\bigsqcup_{c\in P(n)}(\phi(F))^{-1}(c)\right)\times\oint{0,1}^n\\
          \cong & \bigsqcup_{n\in\N} \phi(F)(n)\times\oint{0,1}^n\\
          =     & |\phi(F)|_\Top
  \end{align*}
  (we write $\phi(F)$ both for the morphism and its domain). Now it suffices to notice that $|F|\to|P|_\Top$ and $|\phi(F)|_\Top$ are both local homeomorphisms,
  by \cite[\S II.6]{maclane2012sheaves} and \cref{lem:fib-etale} respectively,
  so this bijection is in fact a local homeomorphism, thus a homeomorphism.
        \qed
\end{proof}
%
% We get the following description of the combinatorial blowup.

% \begin{definition}
  % Let $I$ be the relational precubical set consisting of two edges sharing a vertex
  % \[\to \cdot\to \]
  % For $n\in\N$, $I^{\otimes n}$ is the $n$-fold tensor product of $I$.
% \end{definition}

We still need to make the link between the combinatorial blowups of~\cite{chamoun2025non} and \cref{def:bup}.
Fix $n\in\N$, a relational precubical set $P$ of maximal dimension $n$, $k\leq n$, $c\in P(k)$.
Let $m(c)$ be the vertex $(c,(1/2,\ldots,1/2))$ of $\frac{1}{2}P$, \ie the vertex added at the center of the cube $c$.
Define $Z^0:=I$,\,$Z^1:=J$.
Note that in~\cite{chamoun2025non}, the combinatorial blowup of $P$ is expressed in terms of the precubical subsets of the $6$-subdivision of $P$ which are isomorphic to $I_{0,n}$. 
We have to subdivise that much essentially because of boundary issues, to ensure that we have isomorphisms.
Working with relational precubical subsets which are isomorphic to $N(I_{0,n})$ instead immediately simplifies the situation, and just requires to look at the barycentric subdivision of $P$.
Now we want an expression which does not use any subdivision at all,
so we need to replace isomorphisms by surjective local embeddings.
The following two results are enough, since it is easy to see that they are true ``up to permutation of the dimensions of the cubes''.

\begin{lemma}\label{lem1}
  For every monomorphism $i:N(I_{0,n})\to \frac{1}{2}P$ centered at $m(c)$ (\ie such that $i(\min(N(I_{0,n})))=m(c)$),
  there is a tuple $w\in\{0,1\}^n$ with exactly $k$ zeros, such that $i$ factors through the inclusion $N(I_{0,n})\hookrightarrow \frac{1}{2}N(\bigotimes_{i} Z^{w_i})$ sending $\min(N(I_{0,n}))$ to $\min(N(\bigotimes_{i} Z^{w_i}))$,
  via a morphism of the form $\frac{1}{2}f$ for $f:N(\bigotimes_{i} Z^{w_i})\to P$. 
  Moreover, $w$ and $f$ are unique.
\end{lemma}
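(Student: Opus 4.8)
The plan is to analyze the structure of a monomorphism $i : N(I_{0,n}) \to \tfrac12 P$ centered at $m(c)$ by tracking where $i$ sends the $n$ edges adjacent to the minimal vertex $\min(N(I_{0,n}))$. First I would recall that $N(I_{0,n})$ is, by definition, the neighborhood of the unique minimal ($0$-dimensional) cube of $I_{0,n} = J^{\otimes n}$; concretely it is the star of the center vertex in the barycentrically subdivided $n$-cube, so it has exactly $n$ edges emanating from $\min$, one in each coordinate direction, and these edges determine the whole of $N(I_{0,n})$ as a symmetric precubical set. Since $i$ is a monomorphism sending $\min(N(I_{0,n}))$ to $m(c)=(c,(1/2,\dots,1/2))$, each of these $n$ edges is sent to an edge of $\tfrac12 P$ incident to $m(c)$. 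By the explicit description of the barycentric subdivision $\tfrac12 P$ recalled in \cref{sec:pcset} (elements are pairs $(x,(t_1,\dots,t_m))$ with $t_j\in\{1/4,1/2,3/4\}$, and the edges through $m(c)$ belong either to $c$ itself — moving one coordinate of $c$ from $1/2$ toward $1/4$ or $3/4$ — or to a codimension-one coface of $c$ in $P$), one reads off that each such edge either ``stays inside $c$'' or ``goes out of $c$'' in a well-defined direction. This is exactly the data of a word $w\in\{0,1\}^n$: the $k$ directions of $c$ itself give the zeros, and the $n-k$ cofacial directions give the ones. That there are exactly $k$ zeros is forced by $\dim(c)=k$.

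Next I would argue that this word $w$ determines a canonical factorization. The key observation is that $N(\bigotimes_i Z^{w_i})$ — where $Z^0=I$, $Z^1=J$ — is exactly the euclidean brick $N(\min(I_{k,n-k}))$ up to permutation of directions, and its barycentric subdivision $\tfrac12 N(\bigotimes_i Z^{w_i})$ contains a canonical copy of $N(I_{0,n})$ centered at its own minimal vertex. The point is that $\tfrac12$ applied to a tensor of $I$'s and $J$'s is again such a tensor (of $J$'s and of $\cdot\to\cdot\to\cdot\to\cdot$'s), and the star of the appropriate center vertex is $N(I_{0,n})$. So the inclusion $N(I_{0,n}) \hookrightarrow \tfrac12 N(\bigotimes_i Z^{w_i})$ makes sense. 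To produce $f : N(\bigotimes_i Z^{w_i}) \to P$, I would use the universal property of $N(\bigotimes_i Z^{w_i})$ as a (symmetric) precubical set: a cube of this brick is either inside the ``central'' $k$-cube or a coface of it, and the images of the $n$ edges through $m(c)$ under $i$, together with the coherence forced by $i$ being a map of precubical sets, assemble uniquely into a cube assignment; the lax/face relations of $P$ and the fact that $i$ is monic guarantee this assignment is well-defined and a morphism. Then $\tfrac12 f$ restricted to the embedded $N(I_{0,n})$ agrees with $i$ because both are determined by the same edge data. The ``moreover'' — uniqueness of $w$ and $f$ — follows because $w$ is recovered from $i$ by the direction-type of each edge, and $f$ is recovered from $\tfrac12 f$ (the functor $\tfrac12$ being faithful, as a realization functor followed by a full inclusion) together with the fact that $N(I_{0,n}) \hookrightarrow \tfrac12 N(\bigotimes_i Z^{w_i})$ is ``jointly surjective enough'' on generators: every generating edge of the brick appears, subdivided, in the image.

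The main obstacle I expect is the bookkeeping around \emph{symmetric} precubical sets and the permutation of dimensions: strictly speaking $N(\bigotimes_i Z^{w_i})$ depends on the order of the $w_i$, whereas $N(I_{0,n})$ and the target $\tfrac12 P$ do not see this order, so one has to be careful to state uniqueness of $w$ as a word (the order being inherited from the chosen identification of $N(I_{0,n})$ with $J^{\otimes n}$ and hence from the labeling of the $n$ edges through $\min$). The excerpt's parenthetical — ``it is easy to see that they are true up to permutation of the dimensions'' — signals that the intended proof handles the unordered case and lets permutation absorb the rest; so my plan would be to fix once and for all the ordering convention on the edges of $N(I_{0,n})$, do the argument with that ordering, and then remark that a different ordering changes $w$ by the corresponding permutation and $f$ by precomposition with the symmetry, which is the expected behavior. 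The second, more routine obstacle is verifying that the candidate $f$ really is a morphism of precubical sets rather than just a compatible family on generators — this is where one invokes that $P$ is a (relational) precubical set and that the relations among the face maps in $N(\bigotimes_i Z^{w_i})$ are exactly those generated by the cube-category relations, so no extra coherence needs checking.
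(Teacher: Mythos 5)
Your overall strategy is the same as the paper's: classify the edges of $\tfrac{1}{2}P$ adjacent to $m(c)$ lying in the image of $i$ according to whether they belong to $c$ itself or to a $(k{+}1)$-dimensional coface, read off the word $w$ from this classification, and then extend $i$ to the whole brick by saturating over the cubes of $P$ that the image already meets. However, there is one concrete slip that breaks your derivation of $w$ as stated: $N(I_{0,n})$ has $2n$ edges through its minimal vertex, not $n$ --- two per coordinate direction, one on each side of the centre (see the picture of $N(\min(I_{0,2}))$, which has four edges through the central vertex). Consequently ``one bit per edge'' produces a word of length $2n$ rather than an element of $\{0,1\}^n$, and you still owe an argument that the outward edges organize into $n-k$ directions each extending on \emph{both} sides of $m(c)$. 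This is exactly the point the paper's proof addresses: it double-counts incidences between centre-adjacent edges and the $2^n$ top-dimensional cubes of the image (each top cube contributes $n-k$ outward edges, each such edge is shared by $2^{n-1}$ top cubes) to conclude that there are exactly $2(n-k)$ outward edges, which then pair up into the $n-k$ directions where $Z^{w_i}=J$. With that count supplied, the rest of your plan --- building $f$ from the cube assignment forced by $i$ (equivalently, the paper's saturation: add every $(e,\mathbf{s})$ such that some $(e,\mathbf{s}')$ is already in the image), recovering uniqueness of $w$ and $f$ from the edge data, and absorbing the ordering of directions into the symmetric structure --- matches the paper's argument, which is itself only sketched at this level of detail.
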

\begin{proof}
  We have to prove that we can ``extend'' $i$ in exactly $2(n-k)$ dimensions. 
  There are exactly $2(n-k)$ edges in the image of $i$ which are of the form $(c,\mathbf{t})$ where $c$ is of dimension $k+1$.
  Indeed, all the $n$-cubes with are adjacent to $m(c)$ in $\frac{1}{2}P$ have exactly $n-k$ edges adjacent to $m(c)$ with are of this form, by definition of the subdivision;
  one edge is shared by $2^{n-1}$ $n$-cubes, and there are $2^n$ $n$-cubes, so the total number of such edges is $2(n-k)$.
  These edges represent the directions in which we can extend $i$.
  Now we just add all the cubes $(e,\mathbf{s})$ such that there is some cube $(e,\mathbf{s}')\in \Im(i)$.
  Details are easily checked.
\end{proof}

\begin{lemma}\label{lem2}
  Let $p:N(I_{k,n-k})\to P$. The induced morphism \[p':N(I_{0,n})\hookrightarrow \frac{1}{2}N(I_{k,n-k})\to \frac{1}{2}P\]
  is a monomorphism if and only if $p$ is a local embedding.
\end{lemma}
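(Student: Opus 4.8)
The plan is to translate both sides of the claimed equivalence into conditions on the underlying \emph{sets} of cubes and then to reduce everything to the single cube $\min$ of the euclidean brick $N(I_{k,n-k})$. I will freely use the following facts about the barycentric subdivision (from its construction above and from \cite{chamoun2025non}): a cube of $\frac{1}{2}Q$ is \emph{uniquely} a pair $(x,\mathbf t)$ with $x$ a cube of $Q$ and $\mathbf t\in\{1/4,1/2,3/4\}^{\dim x}$; since a morphism of relational presheaves has underlying component \emph{functions}, the induced map $\frac{1}{2}p$ sends $(x,\mathbf t)$ to $(p(x),\mathbf t)$; and the canonical inclusion $N(I_{0,n})\hookrightarrow\frac{1}{2}N(I_{k,n-k})$ identifies $N(I_{0,n})$ with the open star of the barycentre $m(\min)$, i.e.\ with the full subobject on those cells $(x,\mathbf t)$ whose closure contains $m(\min)$.

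First I would reduce the local-embedding condition to the single cube $\min$. Since $N(I_{k,n-k})=N(\min)$, every cube $c$ of the brick admits some $c\to_h\min$; hence if $a\to_f c$ and $b\to_f c$ witness a failure of the local-embedding condition (so $a\neq b$ and $p(a)=p(b)$), composing both relations with a fixed $c\to_h\min$ and using \eqref{eq:relpsh-lax} yields $a\to_g\min$ and $b\to_g\min$ for \emph{one and the same} morphism $g$. So $p$ is a local embedding if and only if, for every face map $g$, the restriction of $p$ to $\{x : x\to_g\min\}$ is injective.

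Next I would make the combinatorics explicit for the standard brick $I^{\otimes k}\otimes J^{\otimes(n-k)}$ (a general euclidean brick then following ``up to a permutation of directions'', as already remarked). Every cube $x$ of $N(\min)$ has the form $I^{\otimes k}\otimes y$ with $y$ a cube of $J^{\otimes(n-k)}$ containing the central vertex; $\min$ is a face of $x$ via a \emph{unique} map $g_x$, the cube $\min$ occupies the first $k$ directions of $x$, and $g_x$ is pinned down by the signs it imposes on the remaining $\dim x-k$ directions. Unwinding the star-of-$m(\min)$ description, the cubes of $N(I_{0,n})$ correspond bijectively to pairs $(x,\tau)$ with $x$ a cube of the brick and $\tau\in\{1/4,1/2,3/4\}^{k}$ arbitrary, the inclusion sending $(x,\tau)$ to the cell $(x,\mathbf t)$ whose first $k$ entries are $\tau$ and whose last $\dim x-k$ entries record the signs of $g_x$. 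Hence $p'(x,\tau)=(p(x),\mathbf t)$ with the \emph{same} $\mathbf t$. An equality $p'(x,\tau)=p'(x',\tau')$ then forces $p(x)=p(x')$, and (by uniqueness of the pair representation) $\mathbf t=\mathbf t'$, whence $\tau=\tau'$ and $g_x=g_{x'}=:g$; so $x\to_g\min$ and $x'\to_g\min$. If $p$ is a local embedding, the first step gives that $p$ is injective on $\{x : x\to_g\min\}$, so $x=x'$ and $p'$ is injective. Conversely, any failure of such an injectivity gives $x\neq x'$ with $p'(x,\tau)=p'(x',\tau)$ for an arbitrary $\tau$, so $p'$ is not injective; by the first step this accounts for \emph{every} failure of the local-embedding condition. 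Since limits in $\RelPsh(\square)$ are computed pointwise \cite{niefield2004change}, a morphism there is monic exactly when it is componentwise injective, so ``$p'$ monic'' and ``$p'$ componentwise injective'' coincide and the proof is complete.

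The hard part will be the bookkeeping of the third step: giving a completely precise description of the open star of $m(\min)$ inside $\frac{1}{2}N(I_{k,n-k})$ and identifying it with $N(I_{0,n})$, and in particular checking that the reindexing of the coordinates of a subdivided cell by $1,\dots,\dim x$ is compatible with this inclusion, so that ``the last $\dim x-k$ coordinates of $\mathbf t$ determine $g_x$'' holds on the nose rather than merely up to a permutation of coordinates. The degenerate case $k=n$ (where $\min=x$ and the transverse block is empty) and the low-dimensional cells of the star warrant a separate quick check, as does the formula $\frac{1}{2}p(x,\mathbf t)=(p(x),\mathbf t)$ itself.
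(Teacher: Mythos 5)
Your proof is correct and follows essentially the same route as the paper's: first reduce the local-embedding condition to injectivity on the sets $\{x : x\to_g\min\}$ by composing down to $\min$, then observe that $p'$ acts as $(x,\mathbf t)\mapsto(p(x),\mathbf t)$, so that a collision forces $p(x)=p(x')$ together with a common face map $g$ into $\min$, and conversely. Your added remarks (pointwise limits giving ``monic = componentwise injective'', and the explicit star-of-$m(\min)$ bookkeeping) only make explicit what the paper leaves implicit.
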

\begin{proof}
  Notice that $p$ is a local embedding if and only if $p$ is a local embedding \emph{at $\min(I_{k,n-k})$}, in the sense that $a\to_f\min(I_{k,n-k})$ and $b\to_f \min(I_{k,n-k})$ implies $p(a)\ne p(b)$.
  Indeed, on direction is straightforward. For the other one, suppose that there is some $e$ such that $a\to_f e$ and $b\to_f e$ for some $a,\,b,\,f$.
  By definition of $N(I_{k,n-k})$, there is $g$ such that $e\to_g\min(I_{k,n-k})$, so $a\to_{g\circ f}\min(I_{k,n-k})$ and $b\to_{g\circ f}\min(I_{k,n-k})$.

  Now suppose that $p$ is a local embedding and take two cubes $(a,\mathbf{t})$ and $(b,\mathbf{s})$ of $N(I_{0,n})\subseteq \frac{1}{2}N(I_{k,n-k})$ such that $p'((a,\mathbf{t}))=p'((b,\mathbf{s}))$. 
  Necessarily $p(a)=p(b)=:e$. But then $p'((a,\mathbf{t}))=(e,\mathbf{t})$ and $p'((b,\mathbf{s}))=(e,\mathbf{s})$, so $\mathbf{s}=\mathbf{t}$.
  But this means that $\min(I_{k,n-k})$ is in the same position relatively to $a$ and $b$, otherwise we would not have $(a,\mathbf{t}),(b,\mathbf{t})\in N(I_{0,n})$.
  In other words, there is $f$ such that $a\to_f \min(I_{k,n-k})$ and $b\to_f \min(I_{k,n-k})$.
  But $p$ is a local embedding, so $a=b$.
  The converse is similar, using the above characterisation of local embeddings.
      \qed
\end{proof}
\end{document}